\def\corO{}
\def\corJ{}
\def\corJR{}
\def\corOR{}
\newcommand{\bbC}{{\mathbb{C}}}
\newcommand{\bbD}{{\mathbb{D}}}
\newcommand{\bbH}{{\mathbb{H}}}
\newcommand{\bbN}{{\mathbb{N}}}
\newcommand{\bbP}{{\mathbb{P}}}
\newcommand{\bbR}{{\mathbb{R}}}
\newcommand{\bbU}{{\mathbb{U}}}
\newcommand{\bbZ}{{\mathbb{Z}}}
\newcommand{\calC}{{\mathcal{C}}}
\newcommand{\calG}{{\mathcal G}}
\newcommand{\calI}{{\mathcal I}}
\newcommand{\calL}{{\mathcal L}}
\newcommand{\calM}{{\mathcal M}}
\newcommand{\calP}{{\mathcal P}}
\newcommand{\calT}{{\mathcal T}}
\newcommand{\calV}{{\mathcal V}}
\newcommand{\bdone}{{\boldsymbol{1}}}
\newcommand{\lb}{\label}
\newcommand{\ti}{\tilde  }
\newcommand{\wti}{\widetilde  }
\newcommand{\tr}{\text{\rm{Tr}}}
\newcommand{\ran}{\text{\rm{ran}}}
\newcommand{\bi}{\bibitem}
\newcommand{\beq}{\begin{equation}}
\newcommand{\eeq}{\end{equation}}
\newcommand{\ba}{\begin{align}}
\newcommand{\ea}{\end{align}}
\newcommand{\CUE}{\mbox{\rm CUE}}
\newcommand{\GUE}{\mbox{\rm GUE}}
\newcounter{smalllist}
\newenvironment{SL}{\begin{list}{{\rm\roman{smalllist})}}{%
\setlength{\topsep}{0mm}\setlength{\parsep}{0mm}\setlength{\itemsep}{0mm}%
\setlength{\labelwidth}{2em}\setlength{\leftmargin}{2em}\usecounter{smalllist}%
}}{\end{list}}
\newcommand{\comm}[1]{}
\DeclareMathOperator{\re}{Re}
\numberwithin{equation}{section}
\newtheorem{theorem}{Theorem}[section]
\newtheorem*{p2.1}{Proposition 2.1}
\newtheorem{proposition}[theorem]{Proposition}
\newtheorem{corollary}[theorem]{Corollary}
\theoremstyle{definition}
\newtheorem*{remark}{Remark}
\newtheorem*{remarks}{Remarks}
\newcommand{\jap}[1]{\langle #1 \rangle}
\newcommand{\norm}[1]{\lVert#1\rVert}
\newcommand{\overbar}{\overline}
\begin{document}

\title[Large Deviations and the Lukic Conjecture]{Large Deviations and the Lukic Conjecture}

\author[J.~Breuer, B.~Simon, and O.~Zeitouni]{Jonathan~Breuer$^{1,4}$, Barry~Simon$^{2,5}$,\\and
Ofer Zeitouni$^{3,6}$}

\thanks{$^1$ Institute of Mathematics, The Hebrew University, 91904 Jerusalem, Israel.
E-mail: jbreuer@math.huji.ac.il}

\thanks{$^2$ Departments of Mathematics and Physics, Mathematics 253-37, California Institute of Technology, Pasadena, CA 91125.
E-mail: bsimon@caltech.edu}

\thanks{$^3$ Faculty of Mathematics, Weizmann Institute of Science, POB 26, Rehovot 76100, Israel and Courant Institute, NYU.
E-mail: ofer.zeitouni@weizmann.ac.il.}

\thanks{$^4$ Research supported in part by \corJ{the Israel Science Foundation (Grant no.\ 399/16) and in part by the United States-Israel Binational Science Foundation (Grant No.\ 2014337)}.}

\thanks{$^5$  Research supported in part by NSF grant DMS-1265592 and in part by \corJ{the United States-Israel Binational Science Foundation (Grant No.\ 2014337)}.}

\thanks {$^6$ Research supported in part by a grant from the Israel Science Foundation.}

\

\date{\today}
\keywords{sum rules, large deviations, orthogonal polynomials}
\subjclass[2010]{60F10,35P05,42C05}

\begin{abstract}  We use the large deviation approach to sum rules pioneered by Gamboa, Nagel and Rouault to prove higher order sum rules for orthogonal polynomials on the unit circle.  In particular, we prove one half of a conjectured sum rule of Lukic in the case of two singular points, one simple and one double.  This is important because it is known that the conjecture of Simon fails in exactly this case, so this paper provides support for the idea that Lukic's replacement for Simon's conjecture might be true.
\end{abstract}

\maketitle

%%%%%%%%%%%%%%%%%%%%%%%%%%%%%%%%%%%%%%%%%%%%%%%%%%%%%%%%%%%%%%
\section{Introduction} \lb{s1}
%%%%%%%%%%%%%%%%%%%%%%%%%%%%%%%%%%%%%%%%%%%%%%%%%%%%%%%%%%%%%%

This paper is a contribution to the theory of sum rules in the spectral theory of orthogonal polynomials.  The earliest such result is Szeg\H{o}'s Theorem for orthogonal polynomials on the unit circle (OPUC) in Verblunsky's form \cite{Verb} of which we'll say more soon.  The modern theory was initiated by Killip--Simon \cite{KS} for orthogonal polynomials on the real line (OPRL) with considerable work by others \cite{DK,GZ,Kupin,LNS,Lukic,Lukic2,NPVY,SZ}.

Here we'll consider OPUC. Given a probability measure $\mu$ on $\partial\bbD$, one can form the non--zero (in $L^2(\partial\bbD,d\mu)$), monic orthogonal polynomials $\{\Phi_n\}_{n=0}^M$ where $M=N-1$ if $\mu$ has exactly $N$ points in its support and $M=\infty$ if $\mu$ has infinitely many points in its support.  In the case there are exactly $N$ points, one defines $\Phi_N$ to be the unique degree $N$ monic polynomial \corO{vanishing} at all $N$ points (so $\Phi_N = 0$ in $L^2(\partial\bbD,d\mu)$).  The recursion (aka Verblunsky) coefficients, $\{\alpha_j\}_{j=0}^M$, are given by the recursion relations, $0 \le j < M+1$:
\begin{equation}\label{1.1}
  \Phi_{n+1}(z)=z\Phi_n(z) - \overbar{\alpha}_n \Phi^*_n(z); \quad \Phi_0 \equiv \bdone; \quad \Phi^*_n(z) = z^n \overbar{\Phi_n\left(\frac{1}{\bar{z}}\right)}
\end{equation}

For $N=\infty$, $\{\alpha_j\}_{j=0}^\infty \in \bbD^\infty$ (see \cite{OPUC1}) and for $N < \infty$, only $\alpha_0,\dots,\alpha_{N-1}$ are defined (since $\Phi_k$ is only defined for $k \le N$) and $\alpha_k \in \bbD, k=0,\dots,N-2, \, \alpha_{N-1} \in \partial\bbD$.

Verblunsky's Theorem states that there is a one--one correspondence, $\corJ{\calV}$, from probability measures to Verblunsky coefficients with the above restrictions, i.e. $\ran(\corJ{\calV} \restriction$ measures with infinite support) $= \prod_{j=0}^{\infty} \bbD$ and $\ran(\corJ{\calV}\restriction n$--point measures) $=\prod_{j=0}^{n-2} \bbD \times \partial\bbD$.  Moreover in the natural topologies, $\corJ{\calV}$ is a homeomorphism.

Szeg\H{o}'s Theorem in Verblunsky form says that
\begin{equation}\label{1.2}
  H\left(\left.\frac{d\theta}{2\pi}\right|\mu\right) = -\sum_{n=0}^{\corJR{M}} \log(1-|\alpha_n|^2)
\end{equation}
where $H(\nu|\mu)$ is the \corJ{Kullback-Leibler (KL)} divergence (aka $\pm$ \corJ{the} relative entropy, depending on the sign convention for the relative entropy)\corJ{:}
\begin{equation} \lb{1.3}
H(\nu \, | \,\mu) = \begin{cases}
  \int \log \left(\frac{d\nu}{d\mu}\right) \, d\nu , & \mbox{if } \nu \mbox{ is } \mu\mbox{--a.c.} \\
  \infty, & \mbox{otherwise}.
\end{cases}
\end{equation}

\eqref{1.2} always holds although both sides may be $+\infty$.
\corOR{(The latter is the case if, e.g.,} \corJR{ $M<\infty$ since in that case the $n=M$ term in the sum is $-\log(0)=\infty$).}
In particular, the condition that both sides are finite at the same time implies
\begin{equation}\label{1.4}
  \sum_{j=0}^{\infty} |\alpha_j|^2 < \infty \iff \int \log(w(\theta)) \frac{d\theta}{2\pi} > -\infty
\end{equation}
where
\begin{equation}\label{1.5}
  d\mu(\theta) = w(\theta)\frac{d\theta}{2\pi}+d\mu_s
\end{equation}
where $d\mu_s$ is singular w.r.t $d\theta$. Simon \cite{SzThm} calls a result like \eqref{1.4} that gives equivalence of spectral data and coefficient data a ``spectral theory gem''.  \eqref{1.4} in particular implies the existence of measures with arbitrarily bad singular part mixed in with a.c. spectrum and with $\ell^2$ decaying Verblunsky coefficients.

The current paper is devoted to higher order sum rules of which the first is that of Simon \cite[Section 2.8]{OPUC1}:
\begin{align}
  -\int (1-\cos \theta) \log(w(\theta)) \frac{d\theta}{2\pi} &= -\frac{1}{2}+\frac{1}{2}\sum_{n=-1}^{\infty}|\alpha_{n+1}-\alpha_n|^2 \nonumber \\
                                                            &\null \quad + \sum_{n=0}^{\infty}\left[-\log(1-|\alpha_n|^2)-|\alpha_n|^2\right] \lb{1.6}
\end{align}
where $\alpha_{-1} \equiv -1$.  This implies the gem
\begin{equation}\label{1.7}
  \corO{\int (1-\cos \theta) \log(w(\theta)) \frac{d\theta}{2\pi} > -\infty }\iff \sum_{n=0}^{\infty}|\alpha_{n+1}-\alpha_n|^2 + |\alpha_n|^4 \corO{<\infty}
\end{equation}

In the same section, Simon conjectured (wrongly as we'll see!) that for $\theta_1,\dots,\theta_k$ distinct in $[0,2\pi)$ and $m_1,\dots, m_k$ strictly positive integers we have that
\begin{equation}\label{1.8}
  \int \prod_{j=1}^{k} (1-\cos(\theta - \theta_j))^{m_j} \log(w(\theta)) \, d\theta > -\infty
\end{equation}
if and only if
\begin{flalign}\label{1.9}
  \textrm{(S1)} &&\mathclap{\prod_{j=1}^{k}(S-e^{i\theta_j})^{m_j} \alpha \in \ell^2}&&
\end{flalign}
and
\begin{flalign}\label{1.10}
  \textrm{(S2)} &&\mathclap{\alpha  \in \ell^{2m+2} \qquad m=\max_{j=1,\dots,k} m_j}&&
\end{flalign}
In \eqref{1.9}, $S$ is the operator
\begin{equation}\label{1.9A}
  (S\alpha)_n = \alpha_{n+1}
\end{equation}

Moreover, Simon--Zlato\v{s} \cite{SZ} proved this conjecture in case $\sum_{j=1}^{k} m_j = 2$, i.e. $k \le 2$ and $(m_1,m_2) = (2,0)$ or $(1,1)$.  For simplicity the remainder of this section will mainly discuss the case $\theta_1 =0, \, \theta_2=\pi$ although the next two sections will revert to the general case.  We'll use the symbol $(m_1,m_2)$ to describe this case.

In \cite{Lukic}, Lukic found a counterexample to this conjecture for the $(2,1)$ case.  He found an explicit example where $\textrm{(S1), (S2)}$ hold but
\begin{equation}\label{1.11}
  \int (1-\cos\theta)^2(1+\cos\theta) \log(w(\theta)) \, \frac{d\theta}{2\pi}
  = \corO{-\infty}
\end{equation}

To have any hope of an equivalence one needs more that $\textrm{(S1), (S2)}$.  Lukic made an improved conjecture that replaced $\textrm{(S1), (S2)}$ by
\begin{flalign}\label{1.12}
\textrm{(L}_1\textrm{1)}   &&\mathclap{\alpha \textrm{ can be written } \alpha = \beta^{(1)}+\cdots+\beta^{(k)}}&&
\end{flalign}
\begin{flalign}\label{1.13}
   \textrm{(L}_1\textrm{2)} &&\mathclap{(S-e^{i\theta_j})^{m_j}\beta^{(j)} \in \ell^2}&&
\end{flalign}
\begin{flalign}\label{1.14}
   \textrm{(L}_1\textrm{3)} &&\mathclap{\beta^{(j)} \in \ell^{2m_j+2}}&&
\end{flalign}

Lukic also proved a flawed gem, i.e. an equivalence under an a priori condition on the Verblunsky coefficients, that provides evidence for his conjecture.  In Section \ref{s7} we'll obtain some additional evidence for the correctness of the Lukic conjecture.  In Section \ref{s2}, we'll consider equivalent versions of Lukic's conditions that are directly expressible in terms of $\alpha$ without reference to a decomposition as a sum.  In a sense, $\beta^{(j)}$ is the part of $\alpha$ localized near $\theta_j$ in Fourier space, so that for the $(2,1)$ case Lukic's conditions are equivalent to (the $\textrm{(L}_2\textrm{)}$ conditions will appear in the next section).
\begin{flalign}\label{1.15}
\textrm{(L}_3\textrm{1)}   &&\mathclap{(S-1)^2(S+1)\alpha \in \ell^2}&&
\end{flalign}
\begin{flalign}\label{1.16}
\textrm{(L}_3\textrm{2)}   &&\mathclap{(S-1)^2\alpha \in \ell^4}&&
\end{flalign}
\begin{flalign}\label{1.17}
\textrm{(L}_3\textrm{3)}   &&\mathclap{\alpha \in \ell^6}&&
\end{flalign}
In this case $\textrm{(L}_3\textrm{1)}\equiv \textrm{(S1)}$, $\textrm{(L}_3\textrm{3)} \equiv \textrm{(S2)}$ and $\textrm{(L}_3\textrm{2)}$ is an extra condition.  The precise result we'll prove in Section \ref{s7} is that the $\textrm{(L}_3)$ conditions imply the finiteness of the integral in \eqref{1.11} (at least when the $\alpha$s are real).

Recently, Gamboa, Nagel and Rouault \cite{GNR1} (henceforth GNR; see also \cite{GNR2,GNR3}) discovered a new approach to Szeg\H{o}'s Theorem (and the Killip--Simon Theorem) using the theory of large deviations (LD).  We wrote a pedagogical presentation of some of these ideas \cite{BSZ1}.  Our main goal in this paper is to use large deviation methods to study higher order sum rules.  We note that GNR \cite{GNR3} discussed \eqref{1.6} using LD methods although for technical reasons, they were unable to prove the actual sum rules.  Below we will assume the reader familiar with some of the basics of LD theory either from books \cite{DS,DZ} or from our paper \cite{BSZ1}.

In Section \ref{s3}, we will prove a sum rule and gem where one side of the gem is the integral in \eqref{1.8}.  In general, the other side of the gem will be a very complicated polynomial in the $\alpha$'s (with some non--polynomial terms of the form $\log(1-|\alpha|^2)$).  This leads to a new insight.  The Lukic conjecture (if true) provides much more humane conditions on the $\alpha$'s than what one gets from the naive sum rule.  We note that we suspect that our sum rules are identical to the ones found by Denissov--Kupin \cite{DK} who did not carry through the examples of Sections \ref{s4}-\ref{s7}.  Section \ref{s4} will use these ideas to get the sum rule \eqref{1.6} in a new way.

In the last four sections, we make two simplifying assumptions:\\

\begin{SL}
\item[(A1)] when $k=2, \, \theta_1 = 0, \, \theta_2 = \pi$ (essentially $\theta_2-\theta_1 = \pi$ is what is important) and we'll also consider the symmetric situation where one has general $k$ points symmetrically arranged as the roots of unity with all $m_j=1$.
\item[(A2)] $\bar{\alpha}_j = \alpha_j$ for all $j \ge 0$.
\end{SL}

\null

These are mainly to make the sometimes involved calculations simpler.  We have no doubt that one can do the calculations without \textrm{(A2)} and suspect one can drop \textrm{(A1)} although with some effort.

In Sections \ref{s5} and \ref{s6}, we recover the Simon--Zlato\v{s} gems (i.e. $(1,1)$ and $(2,0)$), at least under the assumptions \textrm{(A1)--(A2)}. One thing we'll see in these sections is that it is simpler to show that the conditions on Verblunsky coefficients imply the measure condition than the converse so in the last two sections, we'll settle for the simpler half.  In Section \ref{s6.5}, we'll prove this one direction for $k$ equally spaced points, all with order 1, that is we'll prove that $\sum_{n=1}^{\infty} |\alpha_{n+k}-\alpha_n|^2+|\alpha_n|^4 < \infty \Rightarrow \int (1-\cos k\theta) \log(w(\theta))\, d\theta > -\infty$ and in Section \ref{s6.6}, we discuss an arbitrarily high order single singular point under the hypothesis that $\alpha \in \ell^4$.  The results in Sections \ref{s6.5} and \ref{s6.6} are not new but recover, using new methods, special cases of results of Golinskii--Zlato\v{s} \cite{GZ}.  In Section \ref{s7}, we will prove that for the  $(2,1)$ case under \textrm{(A1)/(A2)}, the Lukic conditions imply finiteness of the integral.  Recall that this is a case where the Simon conditions \corJ{do} not imply finiteness of the integral so we regard this as strong evidence for the Lukic conjecture.

We believe our main results in this paper are the general sum rule and gem and the realization that the Lukic conjecture is just about finding a simpler version of the naive Verblunsky coefficient side.  In addition we show how to use LD methods to recover the gems of Simon and Simon--Zlato\v{s} and some  results of Golinskii--Zlato\v{s}.  Finally, we provide evidence for the general Lukic conjecture by finding a situation where his conditions imply the finiteness of the relevant integral and where Simon's do not.\footnote{\corOR{Building on the general sum rules of this paper, Jun Yan developed an algebraic machinery that allowed him
to obtain new examples where (one half of) the Lukic conjecture can be verified. We refer to \cite{JY} for details.}}

We thank Peter Yuditskii for telling two of us about \cite{GNR1} and
\corO{Fabrice} Gamboa, \corO{Jan} Nagel and \corO{Alain}
Rouault for useful discussions.

%%%%%%%%%%%%%%%%%%%%%%%%%%%%%%%%%%%%%%%%%%%%%%%%%%%%%%%%%%%%%%
\section{The Lukic Condition} \lb{s2}
%%%%%%%%%%%%%%%%%%%%%%%%%%%%%%%%%%%%%%%%%%%%%%%%%%%%%%%%%%%%%%

In this section, we want to discuss some equivalent forms of the Lukic conditions $\textrm{(L}_1\textrm{1-3)}$.  This and some of the analysis in later sections will require some discrete hard analysis that we set up here.  First, we'll consider

\begin{flalign}\label{2.1}
\textrm{(L}_2\textrm{1)}   &&\mathclap{\prod_{j=1}^{k}(S-e^{i\theta_j})^{m_j}\alpha \in \ell^2}&&
\end{flalign}

\begin{flalign}\label{2.2}
\textrm{(L}_2\textrm{2)}   &&\mathclap{\prod_{j \ne q}(S-e^{i\theta_j})^{m_j}\alpha \in \ell^{2m_q+2}, \qquad q=1,\dots,k}&&
\end{flalign}

In some sense, $\textrm{(L}_2\textrm{2)}$ says that in ``$\theta$--space'' $\alpha$ is locally $\ell^{2m_q+2}$ near $\theta=\theta_q$.  Our first result is

\begin{theorem}\lb{T2.1} $\textrm{(L}_1\textrm{1-3)} \iff \textrm{(L}_2\textrm{1-2)}$
\end{theorem}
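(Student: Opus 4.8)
The plan is to prove the equivalence $\textrm{(L}_1\textrm{1-3)} \iff \textrm{(L}_2\textrm{1-2)}$ by handling the two directions separately, with the forward direction being the routine one and the reverse direction carrying the real content. For the forward direction, suppose $\alpha = \beta^{(1)} + \cdots + \beta^{(k)}$ with $(S - e^{i\theta_j})^{m_j}\beta^{(j)} \in \ell^2$ and $\beta^{(j)} \in \ell^{2m_j + 2}$. To get $\textrm{(L}_2\textrm{1)}$, apply $\prod_{j=1}^k (S-e^{i\theta_j})^{m_j}$ to each $\beta^{(j)}$: the factor $(S-e^{i\theta_j})^{m_j}$ maps $\beta^{(j)}$ into $\ell^2$, and the remaining factors $\prod_{l \ne j}(S-e^{i\theta_l})^{m_l}$ are bounded operators on $\ell^2$ (they are polynomials in the bounded shift $S$), so each term lies in $\ell^2$ and hence so does the sum. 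For $\textrm{(L}_2\textrm{2)}$ with a fixed $q$, apply $P_q := \prod_{j \ne q}(S-e^{i\theta_j})^{m_j}$ to $\alpha = \sum_j \beta^{(j)}$. For $j = q$, the operator $P_q$ is a polynomial in $S$ and hence bounded on $\ell^{2m_q + 2}$, so $P_q \beta^{(q)} \in \ell^{2m_q+2}$. For $j \ne q$, the factor $(S - e^{i\theta_j})^{m_j}$ inside $P_q$ sends $\beta^{(j)}$ into $\ell^2 \subseteq \ell^{2m_q+2}$ (since $2m_q + 2 \ge 2$ and $\ell^2 \hookrightarrow \ell^p$ for $p \ge 2$), and the remaining factors are bounded on $\ell^{2m_q+2}$, so $P_q \beta^{(j)} \in \ell^{2m_q+2}$. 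Summing gives $P_q\alpha \in \ell^{2m_q+2}$, which is $\textrm{(L}_2\textrm{2)}$.

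The reverse direction is where the work lies: given $\textrm{(L}_2\textrm{1-2)}$, we must manufacture a decomposition $\alpha = \sum \beta^{(j)}$ with the required properties. The natural approach is a Fourier/frequency-space localization near each $\theta_j$. Concretely, I would introduce smooth partition-of-unity functions $\chi_j$ on the circle with $\sum_{j=1}^k \chi_j \equiv 1$, each $\chi_j$ supported in a small arc around $\theta_j$ avoiding the other $\theta_l$, and identically $1$ near $\theta_j$; then define $\beta^{(j)} := \chi_j(S)\alpha$ where $\chi_j(S)$ is the associated Fourier multiplier acting on sequences (this requires interpreting $S$ as multiplication by $e^{i\theta}$ on the frequency side and $\alpha$ as having a "symbol," which is the standard device — one works with the operator convolution by the Fourier coefficients of $\chi_j$, which decay rapidly). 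By construction $\sum_j \beta^{(j)} = \alpha$, giving $\textrm{(L}_1\textrm{1)}$. For $\textrm{(L}_1\textrm{3)}$, we need $\beta^{(j)} \in \ell^{2m_j+2}$: near $\theta_j$, $\chi_j$ is $1$, and the factors $(S - e^{i\theta_l})^{m_l}$ for $l \ne q=j$ appearing in $P_j$ from $\textrm{(L}_2\textrm{2)}$ are invertible (bounded below) on the support of $\chi_j$ since $e^{i\theta_j}$ is bounded away from $e^{i\theta_l}$ there — so morally $\beta^{(j)} = \chi_j(S)\alpha = \chi_j(S) P_j(S)^{-1} P_j(S)\alpha$, and $P_j(S)\alpha \in \ell^{2m_j+2}$ by $\textrm{(L}_2\textrm{2)}$ while $\chi_j(S)P_j(S)^{-1}$ has a symbol that is smooth on the whole circle (the singularities of $P_j^{-1}$ are killed by $\chi_j$), hence is a bounded operator on $\ell^p$ for all $p$. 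This gives $\textrm{(L}_1\textrm{3)}$. Similarly for $\textrm{(L}_1\textrm{2)}$: $(S-e^{i\theta_j})^{m_j}\beta^{(j)} = (S-e^{i\theta_j})^{m_j}\chi_j(S)\alpha$, and $(S-e^{i\theta_j})^{m_j}\chi_j(S)$ has symbol $(e^{i\theta}-e^{i\theta_j})^{m_j}\chi_j(e^{i\theta})$; combined with the full product from $\textrm{(L}_2\textrm{1)}$, which provides $\prod_l (S-e^{i\theta_l})^{m_l}\alpha \in \ell^2$, one writes $\beta^{(j)}$'s relevant quantity in terms of this $\ell^2$ object times a smooth multiplier, yielding $\ell^2$.

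The main technical obstacle — and the reason this needs genuine "discrete hard analysis," as the paper flags — is making the Fourier-multiplier calculus rigorous at the level of sequences rather than functions. Unlike on $\ell^2$ where one has Plancherel, the multipliers $\chi_j(S)$ must be shown to be bounded on $\ell^p$ for $p \ne 2$; this is true when the symbol is smooth (the convolution kernel, being the Fourier coefficients of a $C^\infty$ function, is rapidly decaying, hence in $\ell^1$, and convolution with an $\ell^1$ sequence is bounded on every $\ell^p$ by Young's inequality), but one must be careful that the "inversion" step — dividing by $P_j$ on the support of $\chi_j$ — genuinely produces a smooth symbol on the whole circle. The point is that $\chi_j(e^{i\theta})/\prod_{l\ne j}(e^{i\theta}-e^{i\theta_l})^{m_l}$ is $C^\infty$ since the denominator is nonvanishing wherever $\chi_j \ne 0$; one just needs to extend by zero smoothly. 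A secondary bookkeeping issue is that $\alpha$ is a one-sided sequence, so $S$ is a (non-invertible) unilateral shift; one handles this by either extending sequences by zero and noting the multiplier operators are given by lower-triangular convolution (so compactly-supported-kernel approximations and a limiting argument suffice), or by absorbing the finitely many edge terms, which do not affect any $\ell^p$ membership. Once the multiplier boundedness on $\ell^p$ is in hand, both directions are short; I would structure the write-up as a lemma establishing $\ell^p$-boundedness of smooth Fourier multipliers on one-sided sequences, followed by the two implications as above.
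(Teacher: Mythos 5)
Your proposal is essentially the paper's own proof: the direction $\textrm{(L}_1\textrm{)}\Rightarrow\textrm{(L}_2\textrm{)}$ by boundedness of polynomials in $S$ on every $\ell^p$ together with $\ell^2\subset\ell^{2m_q+2}$, and the direction $\textrm{(L}_2\textrm{)}\Rightarrow\textrm{(L}_1\textrm{)}$ via a smooth frequency-space partition of unity $\beta^{(j)}=PJ_j(S)\alpha$ and division by the symbol that is nonvanishing on the support of the cutoff, i.e.\ the paper's Proposition \ref{P2.2} and Corollary \ref{C2.3}. The one imprecision is your handling of the one-sided shift: the mismatch produced by the projection $P$ is not ``finitely many edge terms'' (the multiplier kernel is not banded or lower-triangular), but a rapidly decaying tail on the left--hence in every $\ell^p$--coming from the rapid decay of the Fourier coefficients of the $C^\infty$ cutoff, which is exactly the ``small argument to deal with the $P$ operator'' carried out in the proof of Corollary \ref{C2.3}.
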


\begin{remark} The same argument shows that $\textrm{(S}\textrm{1-2)}$ are equivalent to \eqref{2.1} and \eqref{2.2} but with $2m_q+2$ replaced by $2\max m_j + 2$.  This illustrates the difference between the Simon and Lukic conditions.
\end{remark}

The proof will depend on momentum space localization.  We can view $\ell^q(\bbN)$ as a subspace of $\ell^q(\bbZ)$ and define $P:\ell^q(\bbZ) \to \ell^q(\bbN)$ by restricting $\{a_n\}_{n=-\infty}^\infty$ to $\{a_n\}_{n=0}^\infty$.  We can think of $P$ either as a map between spaces which clearly has norm $1$ or as a map of $\ell^q(\bbZ)$ to itself whose range is those $a$ with $a_n = 0$ for all $n < 0$.  In the latter view, $P$ is a projection of norm $1$.  We can extend $S$ to $\ell^q(\bbZ)$ by setting $(Sa)_n = a_{n+1}$.  This $S$ is an invertible isometry (on $\ell^q(\bbN)$ it doesn't have a \corJ{left} inverse).

$S$ is unitary on $\ell^2(\bbZ)$ with spectrum all of $\partial\bbD$, so, by the spectral theorem, we can define $F(S)$ on $\ell^2(\bbZ)$ for any $F \in L^\infty(\bbD)$ and then $F_+(S)$ on $\ell^2(\bbN)$ by $a \mapsto PF(S)a$.  These are sometimes called Laurent and Toeplitz operators respectively.  $F(S)$ is made most transparent by using Fourier transform, $f \mapsto f^\#$, mapping $L^2(\partial\bbD,\tfrac{d\theta}{2\pi})$ to $\ell^2(\bbZ)$ by
\begin{equation}\label{2.3}
  f^\#_k = \int_{0}^{2\pi} e^{-ik\theta} f(e^{i\theta}) \frac{d\theta}{2\pi}
\end{equation}
These are, of course, Fourier coefficients of $f$ in the orthonormal basis of $L^2(\partial\bbD,\tfrac{d\theta}{2\pi})$, $\{e^{ik\theta}\}_{k=-\infty}^\infty$, so we can define $a \mapsto a^\flat$ from sequences to functions by defining (with convergence in $L^2$--sense):
\begin{equation}\label{2.4}
  a^\flat(e^{i\theta}) = \sum_{k=-\infty}^{\infty} a_k e^{ik\theta}
\end{equation}
Then $(f^\#)^\flat=f$.

If $F$ is a trigonometric polynomial so
\begin{equation*}
  F(e^{i\theta}) = \sum_{k=-M}^{M} F^\#_k e^{ik\theta}
\end{equation*}
then, for $f \in L^2$,
\begin{align}
  (Ff)^\#_k &= \int_{0}^{2\pi} e^{-ik\theta} \left[\sum_{j=-M}^{M} F^\#_j e^{ij\theta}\right] f(e^{i\theta}) \, \frac{d\theta}{2\pi} \nonumber \\
            &= \sum_{j=-M}^{M} F^\#_j f^\#_{k-j} \lb{2.5}
\end{align}
i.e.\ $F(S)$ is convolution with $F^\#$.  If $F \in C^\infty(\partial\bbD)$, by a simple argument (see \cite[Section 6.3]{RA}), $F_k^\#$ decays faster than any inverse polynomial, so, in particular, $F^\# \in \ell^1$.  Taking limits in \eqref{2.5}, we see that formula still holds but with $M$ replaced by $\infty$.  Thus, since $F^\# \in \ell^1$, we see that as maps on $\ell^p(\bbZ)$ or $\ell^p(\bbN)$, $a \mapsto F(S)a$ maps $\ell^p$ to itself, and since $P$ maps
$\ell^p\corO{(\bbN)}$
to itself, we see that $F_+(S)$ map $\ell^p(\bbN)$ to itself i.e.\

\begin{proposition} \lb{P2.2} $a\mapsto F(S)a$ maps any $\ell^p(\bbZ)$ to itself and $a \mapsto F_+(S)a$ maps any $\ell^p(\bbN)$  for $1 \le p < \infty$ for any $C^\infty$ function, $F$, on $\partial\bbD$.
\end{proposition}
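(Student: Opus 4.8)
The plan is to observe that the paragraph preceding the statement already contains the substance, and to assemble it into a short argument resting on two standard facts: the rapid decay of the Fourier coefficients of a smooth function, and Young's convolution inequality. Since $F\in C^\infty(\partial\bbD)$, repeated integration by parts in \eqref{2.3} (equivalently \cite[Section 6.3]{RA}) shows that $F^\#_k$ decays faster than any inverse power of $|k|$, so in particular $F^\#\in\ell^1(\bbZ)$. I would then record the convolution formula $(F(S)a)_k=(F^\#*a)_k=\sum_{j\in\bbZ}F^\#_j\,a_{k-j}$ for every $a\in\ell^p(\bbZ)$ with $1\le p<\infty$: for finitely supported $a$ this is \eqref{2.5} with $M$ replaced by $\infty$ (harmless, since the sum over $j$ is then finite), such $a$ are dense in $\ell^p(\bbZ)$, and once boundedness is in hand the formula extends by continuity and is consistent with the $\ell^2$ definition of $F(S)$ given via the spectral theorem.

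The boundedness is then immediate from Young's inequality: writing $F^\#*a=\sum_{j\in\bbZ}F^\#_j\,(S^{-j}a)$ and using that each shift $S^{-j}$ is an isometry of $\ell^p(\bbZ)$, the triangle inequality (Minkowski) in $\ell^p$ gives
\[
  \norm{F(S)a}_{\ell^p(\bbZ)} \le \sum_{j\in\bbZ}|F^\#_j|\,\norm{S^{-j}a}_{\ell^p(\bbZ)} = \norm{F^\#}_{\ell^1(\bbZ)}\,\norm{a}_{\ell^p(\bbZ)},
\]
so $a\mapsto F(S)a$ maps $\ell^p(\bbZ)$ to itself with norm at most $\norm{F^\#}_{\ell^1(\bbZ)}$. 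Finally, $F_+(S)=P\circ F(S)$, where $P$ is the norm-one projection of $\ell^p(\bbZ)$ onto $\ell^p(\bbN)$ (equivalently, the norm-one restriction map), so composing yields that $a\mapsto F_+(S)a$ maps $\ell^p(\bbN)$ to itself, with the same bound.

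There is no serious obstacle here; the only point requiring a little care is the extension step for $p\ne2$, i.e.\ checking that the convolution formula unambiguously defines $F(S)$ on all of $\ell^p(\bbZ)$ rather than merely on the dense subspace of finitely supported sequences — and this is exactly what the continuity supplied by the displayed estimate provides. Everything else (decay of $F^\#$, the triangle inequality, $\norm{P}=1$) is routine, which is why the proof can be stated in a couple of lines.
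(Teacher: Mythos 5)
Your proof is correct and follows essentially the same route as the paper: rapid decay of the Fourier coefficients of a $C^\infty$ function gives $F^\#\in\ell^1$, convolution with an $\ell^1$ kernel is bounded on $\ell^p$ (Young/Minkowski), and composing with the norm-one projection $P$ handles $F_+(S)$. The only difference is that you spell out the Young's-inequality estimate and the density/extension step for $p\ne 2$, which the paper leaves implicit.
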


In particular, we can localize in $\theta$--space by picking a convenient partition of unity on $\partial\bbD$ and writing $a=\sum_{j=1}^{k} J_j(S)a$.

\begin{corollary} \lb{C2.3}  Let $Q(z)$ be a Laurent polynomial on $\bbC \setminus \{0\}$.  Let $F$ be a $C^\infty$ function on $\partial\bbD$ so that $Q(z)$ has no zeros in the support of $F$.  Suppose that $a$ lies in some $\ell^q$.  Let $1 \le p < \infty$. Then
\begin{align}
  Q(S)F(S)a \in \ell^p(\bbZ) &\Rightarrow F(S)a \in \ell^p(\bbZ) \lb{2.6} \\
  Q_+(S)F_+(S)a \in \ell^p(\bbN) &\Rightarrow F_+(S)a \in \ell^p(\bbN) \lb{2.6a}
\end{align}
\end{corollary}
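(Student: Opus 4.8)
The plan is to manufacture, out of $Q$, a \emph{smooth} approximate inverse adapted to $\supp(F)$, and then quote Proposition~\ref{P2.2}. We may assume $Q\not\equiv 0$ (otherwise $\supp(F)=\varnothing$ and there is nothing to prove), and we use that a nonzero Laurent polynomial has only finitely many zeros on $\partial\bbD$. With this parametrix in hand, the two--sided statement \eqref{2.6} is essentially immediate, while \eqref{2.6a} requires extra bookkeeping because $P$ does not commute with $Q(S)$ and $Q_+(S)$ has no left inverse.

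First I would build the parametrix. The zero set $Z$ of $Q$ on $\partial\bbD$ is finite, hence compact, and disjoint from the compact set $\supp(F)$, so there is $\chi\in C^\infty(\partial\bbD)$ with $\chi\equiv 1$ on a neighborhood of $\supp(F)$ and $\chi\equiv 0$ on a neighborhood of $Z$. Set $G:=\chi/Q$ away from $Z$ and $G:=0$ near $Z$; the two definitions agree on the overlap, so $G\in C^\infty(\partial\bbD)$, and by construction $GQ\equiv\chi$ and $\chi F\equiv F$, whence $GQF\equiv F$ on $\partial\bbD$. Passing to Fourier coefficients (products of functions correspond to convolutions of $G^\#,Q^\#,F^\#$, all of which lie in $\ell^1$ because $Q^\#$ is finitely supported and $G^\#,F^\#$ are rapidly decaying) and using associativity of convolution, one obtains for every $a\in\ell^q$ the operator identity $G(S)\,Q(S)\,F(S)a=F(S)a$, valid on $\ell^q(\bbZ)$.

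For \eqref{2.6} this finishes the proof: $F(S)a=G(S)\bigl(Q(S)F(S)a\bigr)$, the parenthesized sequence lies in $\ell^p(\bbZ)$ by hypothesis, and $G(S)$ maps $\ell^p(\bbZ)$ into itself by Proposition~\ref{P2.2}. For \eqref{2.6a}, put $d:=F_+(S)a=PF(S)a\in\ell^q(\bbN)$ and $u:=(1-P)F(S)a\in\ell^q(\bbZ)$, which is supported on the negative integers; then $F(S)a=d+u$, so
\begin{equation*}
  d=PF(S)a=PG(S)Q(S)F(S)a=PG(S)Q(S)d+PG(S)Q(S)u .
\end{equation*}
In the first term write $Q(S)d=Q_+(S)d+(1-P)Q(S)d$: the piece $Q_+(S)d=Q_+(S)F_+(S)a$ is in $\ell^p(\bbN)$ by hypothesis, and $(1-P)Q(S)d$ is \emph{finitely} supported because $d$ is supported on $\bbN$ and $Q(S)$ is convolution with the finitely supported $Q^\#$; hence $Q(S)d\in\ell^p(\bbZ)$, and $PG(S)Q(S)d\in\ell^p(\bbN)$ by Proposition~\ref{P2.2}. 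In the second term $G(S)Q(S)u=(GQ)(S)u=\chi(S)u=\chi^\#*u$, so the whole matter reduces to showing $P(\chi^\#*u)\in\ell^p(\bbN)$.

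That last step is the only genuinely analytic point, and the one I expect to carry the real content: it is a ``leakage'' estimate quantifying how much a left--supported $\ell^q$ sequence can bleed into the right half--line under convolution with a rapidly decaying kernel. Concretely, for $n\ge 0$ one has $(\chi^\#*u)_n=\sum_{m<0}\chi^\#_{n-m}u_m$ with $n-m\ge\max(n+1,|m|)$; since $\chi\in C^\infty$ gives $|\chi^\#_k|\le C_s(1+|k|)^{-s}$ for every $s$, bounding $(1+n-m)^{-s}\le(1+n)^{-s/2}(1+|m|)^{-s/2}$ and applying H\"older in $m$ yields $|(\chi^\#*u)_n|\le C_s'(1+n)^{-s/2}\norm{u}_{\ell^q}$, which is $p$--summable once $s$ is taken large. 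Hence $P(\chi^\#*u)\in\ell^p(\bbN)$, and adding the two terms gives $d\in\ell^p(\bbN)$, establishing \eqref{2.6a}.
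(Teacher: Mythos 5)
Your argument is correct, and its first half is exactly the paper's: you build a smooth parametrix $G$ with $G\,Q\,F=F$ and quote Proposition~\ref{P2.2} to get \eqref{2.6}. Where you diverge is in the half--line statement \eqref{2.6a}. The paper's route is to upgrade the hypothesis to a whole--line statement: writing $F(S)a=PF(S)a+(1-P)F(S)a$, it notes that $F(S)a$ (and hence $(1-P)F(S)a$) decays rapidly to the left because $a$ is supported on $\bbN$ and $F^\#$ is rapidly decaying, while $(1-P)Q(S)PF(S)a$ is controlled because $Q^\#$ is finitely supported; combining these, $Q(S)F(S)a\in\ell^p(\bbZ)$, and then one simply applies the already--proved case \eqref{2.6} and projects. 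You instead stay on the half--line, decompose $PF(S)a=PG(S)Q(S)d+PG(S)Q(S)u$, and absorb the cross term by using the stronger property $GQ=\chi$ of your particular parametrix (the paper only needs $GQF=F$) together with an explicit ``leakage'' estimate: a left--supported $\ell^q$ sequence convolved with a rapidly decaying kernel is rapidly decaying on $\bbN$. Both arguments rest on the same two structural facts (smooth symbols give rapidly decaying convolution kernels; $Q^\#$ has finite support), so the content is essentially the same; the paper's arrangement is a bit shorter because it reuses \eqref{2.6} wholesale, whereas yours isolates the quantitative decay statement, which is self--contained and does not need the rapid left--decay of $F(S)a$ itself. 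No gap in either step: your claims that $(1-P)Q(S)d$ is finitely supported and that $(1+n-m)^{-s}\le(1+n)^{-s/2}(1+|m|)^{-s/2}$ for $n\ge 0>m$ are both correct, and the H\"older step closes the estimate.
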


\begin{proof} Suppose first we are dealing with the maps on $\ell^q(\bbZ)$.  By the zero condition, it is easy to find a $C^\infty$ function, $G$, on $\partial\bbD$ so that $G(z)Q(z)F(z) = F(z)$ for all $z \in \partial\bbD$.  Thus, if $Q(S)F(S)a \in \ell^p$, then
\begin{equation*}
  F(S)a = G(S)Q(S)F(S)a \in \ell^p
\end{equation*}
since $G(S)$ maps $\ell^p$ to $\ell^p$.

Now suppose \corJ{$(a_n) \in \ell^q(\bbN)$ and extend it to $\bbZ$ by $a_n = 0$} for $n <0$.  Since $F(S)$ is convolution with a function of very rapid decay, $F(S)a$ and $Q(S)F(S)a$ both have rapid decay to the left so since $PQ(S)PF(S)a$ lies in $\ell^{\corJ{p}}(\corJ{\bbN})$, we see that $Q(S)PF(S)a$ lies in $\ell^p(\corJ{\bbZ})$.  Since $F(S)a$ has rapid decay on the left, $Q(S)(1-P)F(S)a$ lies in $\ell^p(\corJ{\bbZ})$ and so $Q(S)F(S)a$ lies in $\ell^p(\corJ{\bbZ})$ as well.  By the argument in the first paragraph, $F(S)a$ lies in $\ell^p(\corJ{\bbZ})$ so $PF(S)a$ lies in $\ell^p(\corJ{\bbN})$.
\end{proof}

\begin{proof} [Proof of Theorem \ref{T2.1}] ($\textrm{L}_2 \Rightarrow \textrm{L}_1$) Let $\corO{\alpha}$ obey $\textrm{L}_2$.  Pick $\{J_j\}_{j=1}^k$, $C^\infty$ functions on $\partial\bbD$ so that $J_j \ge 0, \sum_{j=1}^{k} J_j = 1$ and $J_j$ vanishes in the neighborhood of $\{\theta_\ell\}_{\ell \ne j}$.  Let $\beta^{(j)} = PJ_j(S)\alpha$.  $\textrm{(L}_1\textrm{1)}$ follows from $\sum_{j=1}^{k} J_j = 1$.  Since $J_j(S)$ commutes with any polynomial in $S$, by \eqref{2.1},
\begin{equation}\label{2.7}
  \left[\prod_{j=1,j \ne q}^{k}(S-e^{i\theta_j})^{m_j}\right](S-e^{i\theta_q})^{m_q}\beta^{(q)} \in \ell^2
\end{equation}
(with a small argument to deal with the P operator) so, by Corollary \ref{C2.3}, \eqref{1.13} holds.  A similar argument shows that \eqref{2.2} implies \eqref{1.14}.

($\textrm{L}_1 \Rightarrow \textrm{L}_2$) Suppose $\corO{\alpha}$ obeys $\textrm{L}_1$.  Since polynomials in $S$ map $\ell^p$ to itself, \eqref{1.13}$\Rightarrow\prod_{j=1}^{k}(S-e^{i\theta_j})^{m_j} \beta^{(q)} \in \ell^2$, so by
\corO{\eqref{1.12}}, we have \eqref{2.1}.  By \eqref{1.13}, if $r \ne q$, then $\prod_{j \ne q}(S-e^{i\theta_j})^{m_j}\beta^{(r)} \in \ell^2 \subset \ell^{2m_q+2}$.  Also \eqref{1.14} implies $\prod_{j \ne q}(S-e^{i\theta_j})^{m_j}\beta^{(q)} \in \ell^{2m_q+2}$.  Therefore, by \eqref{1.12}, we get \eqref{2.2}.
\end{proof}

For comparison with Simon's conjecture, the following version (which appeared already in the last section) is useful.  Let $m=\sup_j m_j$,
\begin{flalign}\label{2.8}
\textrm{(L}_3\textrm{1)}   &&\mathclap{\prod_{j=1}^{k}(S-e^{i\theta_j})^{m_j}\alpha \in \ell^2}&&
\end{flalign}
\begin{flalign}\label{2.9}
\textrm{(L}_3\textrm{2)}   &&\mathclap{\textrm{For } m_q<m \qquad \prod_{j \ne q}(S-e^{i\theta_j})^{m_j}\alpha \in \ell^{2m_q+2}}&&
\end{flalign}
\begin{flalign}\label{2.10}
\textrm{(L}_3\textrm{3)}   &&\mathclap{\alpha \in \ell^{2m+2}}&&
\end{flalign}

\begin{theorem} \lb{T2.4} $\textrm{(L}_1\textrm{1-3)} \iff \textrm{(L}_3\textrm{1-3)}$
\end{theorem}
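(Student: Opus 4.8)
The plan is to prove Theorem \ref{T2.4} by showing $\textrm{(L}_2\textrm{1-2)} \iff \textrm{(L}_3\textrm{1-2-3)}$ and then invoking Theorem \ref{T2.1}. The first conditions match verbatim: $\textrm{(L}_2\textrm{1)}$ is $\textrm{(L}_3\textrm{1)}$. For the rest, note that $\textrm{(L}_3)$ singles out the index (or indices) $q$ with $m_q=m$ and replaces the corresponding $\textrm{(L}_2\textrm{2)}$ condition by the single global statement $\alpha\in\ell^{2m+2}$, while for $m_q<m$ the two lists say exactly the same thing. So the only content is: given $\textrm{(L}_3\textrm{1)}$ and the $m_q<m$ instances of $\textrm{(L}_2\textrm{2)}$, the statement ``$\alpha\in\ell^{2m+2}$'' is equivalent to ``$\prod_{j\ne q}(S-e^{i\theta_j})^{m_j}\alpha\in\ell^{2m+2}$ for every $q$ with $m_q=m$''.

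The direction $\textrm{(L}_3\textrm{3)}\Rightarrow\textrm{(L}_2\textrm{2)}$ for the top indices is immediate: $\alpha\in\ell^{2m+2}$ and polynomials in $S$ map $\ell^{2m+2}$ to itself (Proposition \ref{P2.2} applied to the $C^\infty$ symbol $\prod_{j\ne q}(z-e^{i\theta_j})^{m_j}$, or simply because $S$ is an isometry on every $\ell^p(\bbZ)$ and $\ell^p(\bbN)$), so $\prod_{j\ne q}(S-e^{i\theta_j})^{m_j}\alpha\in\ell^{2m+2}$. The reverse direction is where the localization machinery does the work. First I would pick the same partition of unity $\{J_j\}_{j=1}^k$ as in the proof of Theorem \ref{T2.1}, with $J_j$ supported away from $\theta_\ell$ for $\ell\ne j$, and write $\alpha=\sum_{j=1}^k J_j(S)\alpha$ (up to the harmless $P$-operator bookkeeping, exactly as before). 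It then suffices to show each piece $J_j(S)\alpha\in\ell^{2m+2}$, because $\ell^{2m+2}$ is a vector space. For an index $j$ with $m_j=m$: choose $q$ with $m_q=m$; near $\mathrm{supp}(J_j)$ the Laurent polynomial $Q(z)=\prod_{r\ne q}(z-e^{i\theta_r})^{m_r}$ is nonvanishing (its zeros sit at the $\theta_r$, $r\ne q$, and if $j\ne q$ then... actually one wants $q$ chosen so that $\theta_q=\theta_j$; since $m_j=m=m_q$ we may simply take $q=j$), so $Q(z)$ has no zeros in $\mathrm{supp}(J_j)$, hence by Corollary \ref{C2.3} (the appropriate one of \eqref{2.6}/\eqref{2.6a}), $Q(S)J_j(S)\alpha\in\ell^{2m+2}\Rightarrow J_j(S)\alpha\in\ell^{2m+2}$; and $Q(S)J_j(S)\alpha=J_j(S)Q(S)\alpha=J_j(S)\big[\prod_{r\ne q}(S-e^{i\theta_r})^{m_r}\alpha\big]$ lies in $\ell^{2m+2}$ by the hypothesis $\textrm{(L}_2\textrm{2)}$ for the top index $q$ together with Proposition \ref{P2.2}. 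For an index $j$ with $m_j<m$: here I would use $\textrm{(L}_2\textrm{2)}$ for that $j$, which gives $\prod_{r\ne j}(S-e^{i\theta_r})^{m_r}\alpha\in\ell^{2m_j+2}\subset\ell^{2m+2}$, and then Corollary \ref{C2.3} again (the Laurent polynomial $\prod_{r\ne j}(z-e^{i\theta_r})^{m_r}$ is nonzero on $\mathrm{supp}(J_j)$) yields $J_j(S)\alpha\in\ell^{2m+2}$. Summing over $j$ gives $\alpha\in\ell^{2m+2}$.

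For the converse passage, $\textrm{(L}_3)\Rightarrow\textrm{(L}_2)$, conditions $\textrm{(L}_3\textrm{1)}$ and $\textrm{(L}_3\textrm{2)}$ literally are the $\textrm{(L}_2)$ conditions for indices with $m_q<m$, and we just saw $\textrm{(L}_3\textrm{3)}$ supplies the missing $\textrm{(L}_2\textrm{2)}$ instances for $m_q=m$; so there is nothing further to do. The main obstacle — really the only subtlety — is the same $P$-operator bookkeeping flagged parenthetically in the proof of Theorem \ref{T2.1}: when one writes $\alpha=\sum_j J_j(S)\alpha$ on $\ell^q(\bbN)$ one must pass through $\ell^q(\bbZ)$, exploit that $J_j(S)$ is convolution with a rapidly decaying kernel (so $J_j(S)\alpha$ has rapid decay on the left when $\alpha$ is supported on $\bbN$), and check that $P$ commutes with the argument up to errors lying in every $\ell^p$; this is precisely the mechanism already carried out in the second paragraph of the proof of Corollary \ref{C2.3}, so I would simply cite it rather than repeat it. Everything else is routine use of Proposition \ref{P2.2}, Corollary \ref{C2.3}, and the inclusion $\ell^{s}\subset\ell^{t}$ for $s\le t$.
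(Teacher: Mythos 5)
Your proposal is correct, but it takes a noticeably heavier route than the paper for the one genuinely new implication. The paper's own proof is just a few lines: for $\textrm{(L}_3\textrm{)}\Rightarrow\textrm{(L}_2\textrm{)}$ it observes, exactly as you do, that \eqref{2.10} plus the fact that polynomials in $S$ preserve $\ell^{2m+2}$ supplies the missing instances of \eqref{2.2} with $m_q=m$, and then cites Theorem \ref{T2.1}; but for the converse it never re-runs the localization machinery. Instead it notes that $\textrm{(L}_1\textrm{1-3)}\Rightarrow\textrm{(L}_3\textrm{1-2)}$ is already contained in Theorem \ref{T2.1} (since $\textrm{(L}_3\textrm{1-2)}$ is a sublist of $\textrm{(L}_2\textrm{1-2)}$), and that $\textrm{(L}_3\textrm{3)}$ follows \emph{trivially} from \eqref{1.12} and \eqref{1.14}: each $\beta^{(j)}\in\ell^{2m_j+2}\subset\ell^{2m+2}$, and a finite sum of $\ell^{2m+2}$ sequences is in $\ell^{2m+2}$. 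You instead prove $\textrm{(L}_2\textrm{1-2)}\iff\textrm{(L}_3\textrm{1-3)}$ directly, deriving $\alpha\in\ell^{2m+2}$ from the $\textrm{(L}_2\textrm{2)}$ conditions by a second pass through the partition of unity $\{J_j\}$, Proposition \ref{P2.2} and Corollary \ref{C2.3} (with the same $P$-operator bookkeeping); this is a valid argument and has the mild virtue of showing the $\textrm{(L}_2\textrm{)}$--$\textrm{(L}_3\textrm{)}$ equivalence without invoking the decomposition of $\textrm{(L}_1\textrm{)}$ for that step, but it essentially reproves a piece of Theorem \ref{T2.1} where the paper simply exploits the Lukic decomposition itself and is done in two sentences.
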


\begin{proof} Clearly, \eqref{2.10} implies \eqref{2.2} when $m=m_j$, so $\textrm{(L}_3\textrm{1-3)} \Rightarrow \textrm{(L}_2\textrm{1-2)} \Rightarrow \textrm{(L}_1\textrm{1-3)}$.

  On the other hand, by Theorem \ref{T2.1}, $\textrm{(L}_1\textrm{1-3)} \Rightarrow \textrm{(L}_3\textrm{1-\corOR{2})}$ and trivially, \eqref{1.12} and
  \corO{\eqref{1.14}} $\Rightarrow$ \eqref{2.10}
\end{proof}

To find some equivalent forms of the Lukic conditions, it will be useful to have the following:

\begin{theorem} \lb{T2.5} For any sequence $\corO{\alpha} \in \ell^2(\bbZ)$ of finite support, we have that:
\begin{equation}\label{2.11}
  \norm{(S-1)\corO{\alpha}}_3^2 \le 2 \norm{(S-1)^2\corO{\alpha}}_2 \norm{\corO{\alpha}}_6
\end{equation}
\end{theorem}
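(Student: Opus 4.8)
The plan is to read \eqref{2.11} as a discrete Gagliardo--Nirenberg inequality and to prove it by the same mechanism one uses for the continuum statement $\|f'\|_3^2\le 2\|f''\|_2\|f\|_6$: integrate --- here, sum --- by parts to trade one ``derivative'' of $|f'|^3$ against a second ``derivative'' of $f$, and then close with H\"older for the three exponents $(3,2,6)$, which are forced because $\tfrac13+\tfrac12+\tfrac16=1$. A homogeneity check ($\alpha\mapsto c\alpha$ scales both sides by $c^2$, and the discrete analogue of dilation matches the $\ell^p$ powers) shows that $2$ is the correct order of constant, so the argument must not be wasteful. The finite support hypothesis is used only to make the summation by parts clean (all sums finite, no boundary terms); the general inequality then follows by density, but that is not needed.

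Concretely, I would set $\beta=(S-1)\alpha$, again of finite support, and put $g_n=|\beta_n|\,\overline{\beta_n}$ (just $|\beta_n|\beta_n$ when $\alpha$ is real), so that $g_n\beta_n=|\beta_n|^3$. Using $\beta_n=\alpha_{n+1}-\alpha_n$ and summing by parts,
\[
\|(S-1)\alpha\|_3^3=\sum_n g_n(\alpha_{n+1}-\alpha_n)=-\sum_n(g_n-g_{n-1})\alpha_n\le\sum_n|g_n-g_{n-1}|\,|\alpha_n|,
\]
where the last step uses that the left side is a nonnegative real. The elementary pointwise bound
\[
\bigl|\,|z|\overline z-|w|\overline w\,\bigr|=\bigl|\,|z|(\overline z-\overline w)+(|z|-|w|)\,\overline w\,\bigr|\le(|z|+|w|)\,|z-w|,
\]
which relies only on $\bigl||z|-|w|\bigr|\le|z-w|$, applied with $z=\beta_n$, $w=\beta_{n-1}$ and combined with $\beta_n-\beta_{n-1}=\bigl((S-1)^2\alpha\bigr)_{n-1}$, yields
\[
\|(S-1)\alpha\|_3^3\le\sum_n\bigl(|\beta_n|+|\beta_{n-1}|\bigr)\,\bigl|\bigl((S-1)^2\alpha\bigr)_{n-1}\bigr|\,|\alpha_n|.
\]
Now H\"older with exponents $(3,2,6)$ bounds the right-hand side by $\|u\|_3\,\|(S-1)^2\alpha\|_2\,\|\alpha\|_6$, where $u_n=|\beta_n|+|\beta_{n-1}|$, and Minkowski in $\ell^3$ gives $\|u\|_3\le\|\beta\|_3+\|\beta\|_3=2\|(S-1)\alpha\|_3$. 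Dividing by $\|(S-1)\alpha\|_3$ (the inequality being trivial when this vanishes) produces exactly \eqref{2.11}.

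The one point that needs care is keeping the constant at precisely $2$: this forces the use of the sharp pointwise estimate $(|a|+|b|)\,|a-b|$ rather than the cruder mean value bound $2\max(|a|,|b|)\,|a-b|$, and the $\ell^3$ triangle inequality for $\|u\|_3$ rather than something like $(x+y)^3\le 4(x^3+y^3)$; either cruder choice would leave a constant such as $2^{4/3}$. Everything else --- the summation by parts, the vanishing of boundary terms, and the index bookkeeping --- is routine, and the argument, as written, requires no reality assumption on $\alpha$.
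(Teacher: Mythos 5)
Your proof is correct and is essentially the paper's argument: a summation by parts followed by H\"older with the exponents $\tfrac16+\tfrac13+\tfrac12=1$, which is exactly how the paper also obtains the constant $2$ (there as the sum of two equal trilinear terms, here via $\lVert\,|\beta|+|S^{-1}\beta|\,\rVert_3\le 2\lVert\beta\rVert_3$). The only difference is in the packaging: the paper telescopes the triple product $\alpha\,[(S-1)\bar\alpha]\,|(S-1)\alpha|$ with a discrete Leibniz rule and the bound $\bigl|(S-1)|\gamma|\bigr|\le|(S-1)\gamma|$, while you move the difference onto $g=|\beta|\overline{\beta}$ and use the pointwise estimate $\bigl|\,|z|\overline z-|w|\overline w\,\bigr|\le(|z|+|w|)|z-w|$; these are interchangeable steps leading to the same trilinear bound.
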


\begin{remarks} 1. This is a discrete case of an inequality on derivatives due to Gagliardo \cite{Gag} and Nirenberg \cite{Niren}; see Simon \cite[Section 6.3]{HA} and Taylor \cite{Tay}.  Here $S-1$ replaces $\tfrac{d}{dx}$.  The general version (with essentially the same proof) is
\begin{equation*}
  \norm{(S-1)\corO{\alpha}}^2_{2k/p} \le \frac{2k-p}{p} \norm{(S-1)^2\corO{\alpha}}_{2k/(p+1)} \norm{\corO{\alpha}}_{2k/(p-1)}
\end{equation*}
for $k \ge 1, 1\le p\le k$. \eqref{2.11} is $p=2, k=3$.

2. Once one has \corO{Theorem \ref{T2.5}} then
it is easy to show, by dominated convergence,
that $\corO{\alpha} \in \ell^6, \, (S-1)^2\corO{\alpha} \in \ell^2 \Rightarrow (S-1)\corO{\alpha} \in \ell^3$ and \corO{that}
\eqref{2.11} holds
\corO{even without the condition on finite support of $\corO{\alpha}$}.

3.  This result is in \cite{SZ} and probably other places but the proof is so simple that we give it for the reader's convenience.

4.  \eqref{2.17} below can be thought of as resulting from a summation by parts.
\end{remarks}

\begin{proof} Given $\corO{\alpha}$, define $|\corO{\alpha}|$ by $|\alpha|_n \equiv |\alpha_n|$.  We begin by noting that for $a,b \in \bbC$, we have by the triangle inequality that
\begin{equation}\label{2.12}
  ||a|-|b|| \le |a-b|
\end{equation}
so that if $\corO{\alpha} \le \corO{\beta} \iff \alpha_n \le \beta_n$ for all $n$, then
\begin{equation}\label{2.13}
  |(S-1)|\corO{\alpha}|| \le |(S-1)\corO{\alpha}|
\end{equation}

Note next that Leibniz rule takes the form (where $(\corO{\alpha}\corO{\beta)}_n = \alpha_n\beta_n$)
\begin{equation}\label{2.14}
  (S-1)(\corO{\beta}\corO{\gamma}) = [(S-1)\corO{\beta}]\corO{\gamma}+(S\corO{\beta})[(S-1)\corO{\gamma}]
\end{equation}
so
\begin{align}
  (S-1)(\corO{\beta}\corO{\gamma}\corO{\kappa}) &= [(S-1)\corO{\beta}]\corO{\gamma}\corO{\kappa} + (S\corO{\beta})[(S-1)\corO{\gamma}]\corO{\kappa} \nonumber \\
                                                                    &\null \qquad \quad + (S\corO{\beta})(S\corO{\gamma})[(S-1)\corO{\kappa}] \lb{2.15}
\end{align}
Choose $\corO{\beta}=\corO{\alpha},\, \corO{\gamma}=(S-1)\corO{\bar{\alpha}}, \, \corO{\kappa}=|(S-1)\corO{\alpha}|$ and use the fact that a sum of $(S-1)\corO{\tau}$ is zero when $\corO{\tau}$ has finite support (because of telescoping) to see that if $\alpha$ has finite support, then
\begin{align}\label{2.17}
  \sum_n |[(S-1) \alpha]_n|^3 &\le \sum_n |S\alpha|_n
  \corO{|(S-1)^2\bar{\alpha}|_n
|(S-1)\alpha|_n}
  \nonumber \\
                              & \null \qquad \quad + \sum_n |S\alpha|_n |S(S-1)\bar{\alpha}|_n |(S-1)^2\alpha|_n
\end{align}
where we used \eqref{2.13} to bound $|(S-1)|(S-1)\corO{\alpha}||$ by $|(S-1)^2\corO{\alpha}|$.

H\"{o}lder's inequality and $\tfrac{1}{6} + \tfrac{1}{3} + \tfrac{1}{2}=1$ says that the first sum on the right is bounded by $\norm{S\alpha}_6 \norm{(S-1)\alpha}_3 \norm{(S-1)^2 \bar{\alpha}}_2 = \norm{\alpha}_6 \norm{(S-1)\alpha}_3\norm{(S-1)^{\corO{2}}\alpha}_2$.  The second sum has the same bound which shows that
\begin{equation*}
  \norm{(S-1)\alpha}_3^3 \le 2 \norm{\alpha}_6 \norm{(S-1)\alpha}_3\norm{(S-1)^{\corO{2}}
  \alpha}_2
\end{equation*}
which implies \eqref{2.11}
\end{proof}

Let us focus on the case $(\theta_1,\theta_2,m_1,m_2)=(0,\pi,2,1)$, so we have
\begin{flalign}\label{2.18}
\textrm{(L}_3\textrm{1)}   &&\mathclap{(S+1)(S-1)^2\alpha \in \ell^2}&&
\end{flalign}
\begin{flalign}\label{2.19}
\textrm{(L}_3\textrm{2)}   &&\mathclap{(S-1)^2\alpha \in \ell^{4}}&&
\end{flalign}
\begin{flalign}\label{2.20}
\textrm{(L}_3\textrm{3)}   &&\mathclap{\alpha \in \ell^{6}}&&
\end{flalign}
We want to note that

\begin{theorem} \lb{T2.6}  $\textrm{(L}_3\textrm{1-3)}$ for $(\theta_1,\theta_2,m_1,m_2)=(0,\pi,2,1)$ is equivalent to
\begin{flalign}\label{2.21}
\textrm{(L}_4\textrm{1)}   &&\mathclap{(S+1)(S-1)^2\alpha \in \ell^2}&&
\end{flalign}
\begin{flalign}\label{2.22}
\textrm{(L}_4\textrm{2)}   &&\mathclap{(S-1)\alpha \in \ell^{4}}&&
\end{flalign}
\begin{flalign}\label{2.23}
\textrm{(L}_4\textrm{3)}   &&\mathclap{\alpha \in \ell^{6}}&&
\end{flalign}
Moreover, one also has that if these conditions hold, then
\begin{equation}\label{2.24}
  (S^2-1)\alpha \in \ell^3
\end{equation}
\end{theorem}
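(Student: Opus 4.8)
\emph{The easy direction and the set-up.} The implication \eqref{2.21}--\eqref{2.23} $\Rightarrow$ \eqref{2.18}--\eqref{2.20} is immediate: \eqref{2.21} and \eqref{2.18} coincide, as do \eqref{2.23} and \eqref{2.20}, while \eqref{2.22} forces \eqref{2.19} since $(S-1)^{2}\alpha=(S-1)\bigl[(S-1)\alpha\bigr]$ and $S-1$ preserves every $\ell^{p}$ by Proposition \ref{P2.2}. So the content of the theorem is the reverse implication plus the extra conclusion \eqref{2.24}, and the plan for both is to localize $\alpha$ in momentum space. I would fix a $C^{\infty}$ partition of unity $J_{0}+J_{\pi}\equiv 1$ on $\partial\bbD$ with $J_{0}$ vanishing near $-1$ and $J_{\pi}$ vanishing near $1$, set $\alpha^{(0)}=J_{0}(S)\alpha$, $\alpha^{(\pi)}=J_{\pi}(S)\alpha$, so $\alpha=\alpha^{(0)}+\alpha^{(\pi)}$, and (as usual) regard everything in $\ell^{p}(\bbZ)$, extending $\alpha$ by $0$ and restricting back to $\bbN$ at the end, the $P$-bookkeeping being exactly that in the proof of Corollary \ref{C2.3}. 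Since \eqref{2.21} and \eqref{2.23} are \eqref{2.18} and \eqref{2.20}, only \eqref{2.22} needs proof, i.e.\ that $(S-1)\alpha^{(0)}$ and $(S-1)\alpha^{(\pi)}$ both lie in $\ell^{4}$.

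\emph{The two pieces.} Near $\theta=\pi$ the Laurent polynomial $(z-1)^{2}$ is zero-free on $\supp J_{\pi}$; applying $J_{\pi}(S)$ to \eqref{2.19} (Proposition \ref{P2.2}) gives $(S-1)^{2}\alpha^{(\pi)}=J_{\pi}(S)(S-1)^{2}\alpha\in\ell^{4}$, so Corollary \ref{C2.3} with $Q(z)=(z-1)^{2}$, $F=J_{\pi}$ and $a=\alpha\in\ell^{6}$ yields $\alpha^{(\pi)}\in\ell^{4}$, hence $(S-1)\alpha^{(\pi)}\in\ell^{4}$. Near $\theta=0$ the polynomial $z+1$ is zero-free on $\supp J_{0}$; applying $J_{0}(S)$ to \eqref{2.18} gives $(S+1)\bigl[(S-1)^{2}\alpha^{(0)}\bigr]\in\ell^{2}$, and \eqref{2.19} gives $(S-1)^{2}\alpha\in\ell^{4}$, so Corollary \ref{C2.3} with $Q(z)=z+1$, $F=J_{0}$ applied to the sequence $(S-1)^{2}\alpha$ gives $(S-1)^{2}\alpha^{(0)}\in\ell^{2}$. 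Combined with $\alpha^{(0)}=J_{0}(S)\alpha\in\ell^{6}$ (from \eqref{2.20}), the Gagliardo--Nirenberg inequality \eqref{2.11}, in the form valid without a finite-support hypothesis noted in the remarks after Theorem \ref{T2.5}, gives $(S-1)\alpha^{(0)}\in\ell^{3}\subset\ell^{4}$. Adding the two pieces proves \eqref{2.22}.

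\emph{The extra conclusion \eqref{2.24}.} With the (equivalent) conditions in force I would reuse the decomposition. Near $0$, $(S^{2}-1)\alpha^{(0)}=(S+1)\bigl[(S-1)\alpha^{(0)}\bigr]\in\ell^{3}$, since $(S-1)\alpha^{(0)}\in\ell^{3}$ was just shown and $S+1$ preserves $\ell^{3}$. Near $\pi$, applying $J_{\pi}(S)$ to \eqref{2.21} gives $(S-1)^{2}\bigl[(S+1)\alpha^{(\pi)}\bigr]=J_{\pi}(S)(S+1)(S-1)^{2}\alpha\in\ell^{2}$, and since $(z-1)^{2}$ is zero-free on $\supp J_{\pi}$ and $(S+1)\alpha\in\ell^{6}$, Corollary \ref{C2.3} gives $(S+1)\alpha^{(\pi)}\in\ell^{2}$, whence $(S^{2}-1)\alpha^{(\pi)}=(S-1)\bigl[(S+1)\alpha^{(\pi)}\bigr]\in\ell^{2}\subset\ell^{3}$. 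Adding gives \eqref{2.24}.

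\emph{Where the difficulty sits.} The only genuinely analytic ingredient is the single application of \eqref{2.11}; everything else is the ``elliptic regularity'' bookkeeping packaged in Corollary \ref{C2.3}. The point one must respect — and the reason one cannot simply apply \eqref{2.11} to $\alpha$ itself to obtain \eqref{2.22} — is that near $\theta=\pi$ the factor $S+1$ degenerates and provides no smoothing there, so the $\ell^{4}$ control of $(S-1)\alpha^{(\pi)}$ has to be read straight off \eqref{2.19} via invertibility of $(S-1)^{2}$ on $\supp J_{\pi}$; near $\theta=0$, by contrast, $S+1$ is harmless, $(S-1)^{2}\alpha^{(0)}$ is actually $\ell^{2}$, and \eqref{2.11} applies cleanly. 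The remaining nuisance — shuttling between $\ell^{p}(\bbN)$ and $\ell^{p}(\bbZ)$ and tracking $P$ — is routine and identical to the argument in Corollary \ref{C2.3}, and I do not expect a real obstacle there.
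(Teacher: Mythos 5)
Your proof is correct, but it takes a genuinely different route from the paper's. You run the momentum-space localization machinery of Section \ref{s2} (a partition of unity $J_0+J_\pi$ together with Proposition \ref{P2.2} and Corollary \ref{C2.3}), inverting $(S-1)^2$ on the piece supported away from $z=1$ to get $\alpha^{(\pi)}\in\ell^4$, and applying the Gagliardo--Nirenberg inequality \eqref{2.11} only to the piece supported away from $z=-1$, where $(S-1)^2\alpha^{(0)}$ really is in $\ell^2$; the $\bbN$-versus-$\bbZ$ and $P$ bookkeeping you defer is indeed the same as in Corollary \ref{C2.3} and causes no trouble. The paper's proof needs no localization at all: it applies \eqref{2.11} once to $\beta=(S+1)\alpha$ (note $(S-1)^2\beta\in\ell^2$ by \eqref{2.18} and $\beta\in\ell^6$ by \eqref{2.20}), which immediately gives $(S^2-1)\alpha\in\ell^3$, i.e.\ \eqref{2.24}, hence $(S^2-1)\alpha\in\ell^4$; then the identity $(S^2-1)-(S-1)^2=2(S-1)$ together with \eqref{2.19} yields \eqref{2.22}. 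So where you say one cannot apply \eqref{2.11} to $\alpha$ itself and must therefore localize, the paper instead applies it to $(S+1)\alpha$, turning the degenerate factor $S+1$ near $\theta=\pi$ from an obstacle into the key device. What your approach buys is a systematic scheme, parallel to the proof of Theorem \ref{T2.1}, that would adapt to other configurations of singular points; what the paper's buys is brevity, the fact that \eqref{2.24} already follows from \eqref{2.18} and \eqref{2.20} alone, and (as in the remark after the theorem) the equivalence of $(S-1)\alpha\in\ell^4$ with $(S-1)^{k+1}\alpha\in\ell^4$ under those hypotheses.
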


\begin{remarks} 1. The proof shows that when $\textrm{(L}_3\textrm{1)}$ and $\textrm{(L}_3\textrm{3)}$ hold, then $(S-1)\alpha \in \ell^4$ is equivalent to $(S-1)^{k+1}\alpha \in \ell^4$ for any k fixed $k=1,2,\dots$.

2. The example $\alpha_n=(n+1)^{-1/5}$ obeys $\textrm{(L}_4\textrm{1-3)}$ but doesn't have $\alpha \in \ell^4$.
\end{remarks}

\begin{proof} Clearly $\textrm{(L}_4\textrm{1-3)} \Rightarrow \textrm{(L}_3\textrm{1-3)}$ since $S-1$ maps $\ell^4$ to itself.  So suppose we have $\textrm{(L}_3\textrm{1-3)}$.  Applying \eqref{2.11} to $(S+1)\alpha$ and noting that $\alpha \in \ell^6 \Rightarrow (S+1)\alpha \in \ell^6$, we conclude that $(S-1)(S+1)\alpha = (S^2-1)\alpha \in \ell^3$ proving \eqref{2.24}.

Since $p > q \Rightarrow \ell^q \subset \ell^p$, we see that $(S^2-1)\alpha \in \ell^4$.  Thus
\begin{equation}\label{2.25}
  (S^2-1)\alpha -(S-1)^2\alpha = 2(S-1)\alpha \in \ell^4
\end{equation}
\end{proof}

%%%%%%%%%%%%%%%%%%%%%%%%%%%%%%%%%%%%%%%%%%%%%%%%%%%%%%%%%%%%%%
\section{Sum Rules} \lb{s3}
%%%%%%%%%%%%%%%%%%%%%%%%%%%%%%%%%%%%%%%%%%%%%%%%%%%%%%%%%%%%%%

In this section, we'll explain how to use LD methods to obtain sum rules for any choice of $\{m_j\}_{j=1}^k$ and $\{\theta_j\}_{j=1}^k$ where one side is \eqref{1.8}.  The sum rules imply gems.  In fact, it will be easier to obtain the gems and we'll prove them first as part of the proof of sum rules.  While we haven't tried to prove it in general, we believe our sum rules are the same as those of Denisov--Kupin \cite{DK} obtained using the method of Nazarov et. al. \cite{NPVY}.

We begin by finding matrix models whose LDP on the spectral side involves \eqref{1.8} up to constants.  Our basic random matrix measures will have the form
\begin{equation}\label{3.1}
  Z_N^{-1} e^{-NQ(U)} \, d\bbH_N(U)
\end{equation}
where $Z_N$ is a normalization factor, $Q$ is a function of $U$ of the form
\begin{equation}\label{3.2}
  Q(U) = \tr(V(U))
\end{equation}
where $V$ is a Laurent polynomial
\begin{equation}\label{3.3}
  V(z) = \sum_{\ell=-k}^{k} c_\ell z^\ell
\end{equation}
(if $c_k \ne 0$ and/or $c_{-k} \ne 0$, we say that $k$ is the degree of $Q$ or $V$) and where $\bbH_N$ is Haar measure (aka circular unitary ensemble, $\CUE(n)$).  GNR \cite{GNR1, GNR3} also discussed these models, especially the case $V(e^{i\theta}) = \cos \theta$ (discussed first, \corJ{in a different context}, by Gross--Witten \cite{GW} whose name GNR assign to the model) but they do not prove sum rules or gems for these models.

There is a huge literature on these matrix models, discussed for example in \cite[Section 2.7]{AGZ}.  Much of the literature discusses perturbations of $\GUE$ rather than $\CUE$ but the results that we need extend to $\CUE$, which is technically simpler because random unitary matrices, unlike random self--adjoint matrices, are automatically uniformly bounded.  A major result (see, for example, \cite[Section 2.6]{AGZ}) is that the associated limit of empirical measures (aka density of states), $d\eta$, obeys
\begin{equation} \lb{3.4}
V(e^{i\theta}) = 2 \int \log(|e^{i\theta}-e^{i\psi}|) d\eta(\psi) + C
\end{equation}
for some constant $C$ (which when we start with $\eta$ we will take to be zero).

Any fixed vector, $\varphi \in \bbC^n$, is a cyclic vector for a.e.\ $U \in \bbU_n$.  Associated to each such $U$ is a probability measure $\mu$ on $\partial\bbD$ which is an $n$--point measure with masses at the eigenvalues of $U$ and weights the absolute square of the components of $\varphi$ in the corresponding eigenvectors.  Thus picking $\varphi$ (conventionally to be $\delta_1 = (1,0,\dots,0)$), we get a many-to-one correspondence between a set of unitaries of full measure and all $n$-point spectral measures.  Thus the measure in \eqref{3.1} induces a probability measure on $n$-point probability measures and so on sets of Verblunsky coefficients.  The unitaries $U$ and $U'$ correspond to the
same spectral measure if and \corO{only} if there is a unitary $\corJ{W}$ which has $\varphi$ as an eigenvector with $U'=\corJ{W}U\corJ{W}^{-1}$.  It is important to notice that the spectral measure determines the eigenvalues of $U$ and so $\tr(U^k)$ for any k,  so these traces are only functions of the Verblunsky coefficients and we can compute the traces in any convenient representation of one of the unitaries associated to a given spectral measure.

The measure in \eqref{3.1} induces a measure $\bbP_N$ on $N$--point measures
\corOR{(the spectral measures, viewed as elements of $\calM_{+,1}(\partial \bbD)$)},
and the Verblunsky map drags that to a measure $\wti{\bbP}_N$ on the set of $N$-point Verblunsky coefficients, i.e. $\bbD^{N-1} \times \partial\bbD$.

\corOR{The measure in \eqref{3.1}  induces another measure on the sequence of
empirical measures $L_N=\frac{1}{N}\sum_{i=1}^N \delta_{\lambda_i}\in \calM_{+,1}(\partial \bbD)$, where
$\lambda_i$ are the eigenvalues of $U$.} Recall that if  $V$ obeys \eqref{3.4}, then $L_N$ converges \corJR{a.s.\ } as $N \to \infty$ to \corO{$\eta$} and by the method of Ben Arous--Guionnet \cite{BAG}, the sequence $L_N$ obeys a LDP \corOR{(in the usual topology of weak convergence of probability measures)} with speed \corO{$N^2$} and rate function at measure $\mu$, $E(\mu) - E(\eta)$ where $E$ is the 2D Coulomb energy in external field which is minimized at $\mu = \eta$ (by \eqref{3.4}).

By the arguments in \cite[Section 3]{BSZ1}, if \corOR{the support of $\eta$ is all of $\partial \bbD$ and $\eta$ possesses a density with respect
to Lebesgue's measure which is strictly positive $d\theta$-almost everywhere},
one finds that the spectral measure obeys an LDP in
\corOR{$\calM_{+,1}(\partial \bbD)$}
with speed $N$ and rate function
\begin{equation}\label{3.5}
  I(\mu) = H(\eta\,|\,\mu)
\end{equation}
where $H$ is given by \eqref{1.3}. \corOR{On the other hand, as discussed
in \cite{GNR1} and \cite{BSZ1}, by the continuity
of the map $\calV$, the latter LDP induces  a LDP on the infinite sequence
of Verblunsky coefficients, viewed as elements of $\bbD^{\bbZ_+}$ equipped
with the product topology, with rate function given in terms of $I$. By  the uniqueness of the rate functions in large deviations
theory,} \corJR{if one has an expression for the rate function in terms of the Verblunsky coefficients then one 
gets} a sum rule with the integral in \eqref{1.8} on one side (up to constants due to the normalization of $\eta$ and a $\int \log\left(\frac{d\eta}{d\theta}\right) \, d\eta(\theta)$) term.

\corOR{We remark that the regularity assumptions} {\corJR{stated above for $\eta$ (namely full support and a.e.\ positive density) make it possible} \corOR{to mimic the proof in \cite[Section 3]{BSZ1} and
approximate the spectral measure throughout its support; to see
what goes wrong when there are gaps in the support of $\eta$, it is enough to consider the analogous problem for Hermitian matrices where $\partial \bbD$ is replaced by $\bbR$. In that case, there may be
``stray eigenvalues'' which are not controlled by the LDP for the
empirical measure. We refer to
\cite{GNR1} for a discussion of this issue, and \cite{GNR3} for
a detailed proof of the LDP for the spectral measure in the cases treated in this
paper.}

In what follows, we will be interested in $\eta$ of the form
\begin{equation}\label{3.6}
  \eta = Z_\eta^{-1} \prod_{j=1}^{k} [1-\cos(\theta - \theta_j)]^{m_j} \frac{d\theta}{2\pi},
\end{equation}
\corOR{which automatically satisfies the regularity assumption stated above. }

In computing \eqref{3.4} with that $\eta$, the following is useful
\begin{proposition} \lb{P3.1} For any $n \in \bbZ$, $n \ne 0$, we have that
\begin{equation}\label{3.7}
  -\corO{\int_0^{2\pi}} e^{in\psi} \log |e^{i\psi} - e^{i\theta}| \, \frac{d\psi}{2\pi} = \frac{e^{in\theta}}{2|n|}
\end{equation}
If $n=0$, the integral is zero.
\end{proposition}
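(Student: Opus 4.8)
The plan is to reduce to the case $\theta=0$ by a rotation and then to recognize the integral as a Fourier coefficient of the function $\phi\mapsto\log|1-e^{i\phi}|$, whose Fourier expansion $\log|1-e^{i\phi}|=-\sum_{k\ge1}k^{-1}\cos k\phi$ is classical.

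First I would use $e^{i\psi}-e^{i\theta}=e^{i\theta}(e^{i(\psi-\theta)}-1)$ together with $|e^{i\theta}|=1$ to write $\log|e^{i\psi}-e^{i\theta}|=\log|1-e^{i(\psi-\theta)}|$. Substituting $\phi=\psi-\theta$ and using the $2\pi$-periodicity of the integrand then gives
\begin{equation*}
  -\int_0^{2\pi} e^{in\psi}\log|e^{i\psi}-e^{i\theta}|\,\frac{d\psi}{2\pi}
  = -e^{in\theta}\int_0^{2\pi} e^{in\phi}\log|1-e^{i\phi}|\,\frac{d\phi}{2\pi},
\end{equation*}
so it suffices to show that the remaining integral equals $-\tfrac{1}{2|n|}$ when $n\ne0$ and $0$ when $n=0$; substituting this value back produces the claimed formula $\frac{e^{in\theta}}{2|n|}$ (and $0$ for $n=0$).

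To evaluate $\int_0^{2\pi}e^{in\phi}\log|1-e^{i\phi}|\,\frac{d\phi}{2\pi}$ I would first work with $0<r<1$. From $-\log(1-w)=\sum_{k\ge1}w^k/k$ with $w=re^{i\phi}$, taking real parts gives $\log|1-re^{i\phi}|=\re\log(1-re^{i\phi})=-\sum_{k\ge1}r^k k^{-1}\cos k\phi$, the series converging uniformly in $\phi$ for each fixed $r<1$. Integrating term by term against $e^{in\phi}$ then yields $\int_0^{2\pi}e^{in\phi}\log|1-re^{i\phi}|\,\frac{d\phi}{2\pi}=-\tfrac{r^{|n|}}{2|n|}$ for $n\ne0$, and $0$ for $n=0$. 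It remains to pass to the limit $r\uparrow1$ inside the integral. For $\tfrac12\le r<1$ one has $|1-re^{i\phi}|^2=(1-r)^2+2r(1-\cos\phi)\ge 1-\cos\phi=\tfrac12|1-e^{i\phi}|^2$ and trivially $|1-re^{i\phi}|\le2$, so $\bigl|\log|1-re^{i\phi}|\bigr|\le\bigl|\log|1-e^{i\phi}|\bigr|+\log2$, a fixed $L^1$ function of $\phi$; since $\log|1-re^{i\phi}|\to\log|1-e^{i\phi}|$ for every $\phi\ne0$, dominated convergence gives the desired value of the integral at $r=1$.

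The only point that requires any care is this last interchange of limit and integral, and it is dispatched by the elementary bound $|1-re^{i\phi}|\ge\tfrac1{\sqrt2}|1-e^{i\phi}|$ for $r\ge\tfrac12$, which furnishes the integrable majorant; everything else is the standard power-series computation. (One could instead simply quote the Fourier series of $\log|1-e^{i\phi}|$, but the Abel-limit argument keeps the proof self-contained.)
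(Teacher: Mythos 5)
Your proof is correct, and at its core it runs on the same engine as the paper's: the power series $-\log(1-z)=\sum_{k\ge 1}z^k/k$, applied after the same rotation reducing to $\theta=0$. The difference is in the packaging of the boundary limit. The paper writes $\log|z-1|=\tfrac12(\log(1-z)+\log(1-\bar z))$ with $z=e^{i\psi}$ and evaluates contour integrals over $\partial\bbD$ by the Cauchy integral theorem and the Cauchy formula for Taylor coefficients, disposing of the convergence issue only with a parenthetical remark that the unit-circle integral is a limit of integrals over slightly smaller circles. You instead compute the Fourier coefficient of $\log|1-re^{i\phi}|$ for $r<1$ by uniform term-by-term integration and then let $r\uparrow 1$ by dominated convergence, with the explicit majorant coming from $|1-re^{i\phi}|\ge\tfrac1{\sqrt2}|1-e^{i\phi}|$ for $r\ge\tfrac12$ (and $|1-re^{i\phi}|\le 2$), which is correct and makes the limiting step fully rigorous rather than merely asserted. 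The paper's contour-integral phrasing is slightly slicker (the conjugate piece vanishes at once by Cauchy's theorem, and the sign/conjugation reduction to $n\le 0$ is immediate), while your real-variable Abel-limit argument is self-contained and spells out exactly the point the paper glosses over; either is a perfectly acceptable proof of \eqref{3.7}.
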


\begin{proof}  While this integral is in the tables, the proof is so simple we give it. Replacing $\psi$ by $\psi-\theta$, we can suppose that $\theta = 0$.  By taking complex conjugates, we can suppose that $n \le 0$.  Write $e^{i\psi}=z$ and
\begin{equation*}
  \log|z-1| = \frac{1}{2}\log|z-1|^2 = \frac{1}{2}(\log(1-z)+\log(1-\bar{z}))
\end{equation*}
Then note that for $n < 0$
\begin{equation*}
  \corO{\oint_{\corJ{\partial{\bbD}}}}
  %0^{2\pi}
  \bar{z}^{-n} \log(1-\bar{z}) \, \frac{d\bar{z}}{\bar{z}} = \overbar{\corO{
    \oint_{\corJ{\partial{\bbD}}}} z^{-n-1} \log(1-z) \, dz} = 0
\end{equation*}
by the Cauchy integral theorem.  By the Cauchy formula for Taylor coefficients and the well known series $\log(1-z) = -\sum_{n=1}^{\infty} \tfrac{z^n}{n}$, for $n \le 0$ (since the series only converges inside the disk, one needs to note that the integral over the unit circle is a limit of integrals over slightly smaller circles)
\begin{equation}\label{3.8}
  \frac{1}{2\pi i} \corO{\oint_{\corJ{\partial{\bbD}}}} z^{n-1} \log(1-z) \, dz = \begin{cases}
                                                    0, & \mbox{if } n=0 \\
                                                    -\frac{1}{|n|}, & \mbox{if }n<0
                                                  \end{cases}
\end{equation}
\end{proof}
Thus, for $\eta$ of the form \eqref{3.6}, $V$ defined by \eqref{3.4} is a Laurent polynomial with no constant term.

As a preliminary to the calculation of the Verblunsky coefficient side, we want to make two comments about the \corJ{sum rules and their relation to the rate function. The first one regards the fact that rather than the integral in \eqref{1.8}, the form of the rate function on the measure side is $H(\eta|\mu)$, which involves an additional term of the form
\begin{equation} \nonumber
\int \log \left( \prod_{j=1}^k [1-\cos(\theta-\theta_j)]^{m_j} \right) \prod_{j=1}^k [1-\cos(\theta-\theta_j)]^{m_j} \frac{d\theta}{2\pi}
\end{equation}
Computing this constant term is important in writing the sum rule. As an example,} rather than the left side of \eqref{1.6}, the LD calculation will give $H(\eta|\mu)$ where
\begin{equation}\label{3.9}
  d\eta(\theta) = (1-\cos \theta)\, \frac{d\theta}{2\pi}
\end{equation}
Noting that $\int (1-\cos \theta) \log(1-\cos \theta)\,\tfrac{d\theta}{2\pi} = 1-\log(2)$ (which follows as in the proof of Proposition \ref{P3.1}; see \cite[Section 2.8]{OPUC1}) we can write \eqref{1.6} as
\begin{align}
   H(\eta|\mu) &= 1-\log(2) + \frac{1}{2} |\alpha_0|^2 + \re(\alpha_0) + \frac{1}{2}\sum_{n=0}^{\infty} |\alpha_{n+1}-\alpha_n|^2 \nonumber \\
               & \null \qquad\qquad + \sum_{n=0}^{\infty}\left[-\log(1-|\alpha_n|^2)-|\alpha_n|^2\right] \lb{3.10}
\end{align}

The \corJ{right hand side has to vanish} when the $\alpha_n$ are the Verblunsky coefficients of the measure $\eta$ (since $H(\eta|\eta)=0$).  Let us confirm this not only as a check but because it will let us compute the constant in Section \ref{s4} when we only know the sum rule up to a constant.

The Verblunsky coefficients for the $\eta$ of \eqref{3.9} are not hard to compute \cite[Example 1.6.4 and equation (1.6.14)]{OPUC1}
\begin{equation} \lb{3.11}
\alpha_n^{(0)} = -\frac{1}{n+2} ; \qquad n=0,1,\dots
\end{equation}
Since  $\sum_{n=0}^{\infty} |\alpha_n^{(0)}|^2 < \infty$, we can cancel the $\tfrac{1}{2}|\alpha_n|^2$ terms in the sums on the right side in \eqref{3.10} and see that when $\alpha=\alpha^{(0)}$ the right side is
\begin{align}
  1-\log(2)+\left(-\frac{1}{2}\right) &- \sum_{n=0}^{\infty} \frac{1}{(n+2)(n+3)} \nonumber \\
   &- \log\left(\prod_{n=0}^{\infty} \left[1-\left(\frac{1}{n+2}\right)^2\right]\right)  \label{3.12}
\end{align}
The sum telescopes since $[(n+2)(n+3)]^{-1}=(n+2)^{-1}-(n+3)^{-1}$ so the sum is $1/2$ and $1-\tfrac{1}{2}-\tfrac{1}{2} = 0$. To evaluate the infinite product, note Euler's formula that
\begin{equation*}
  \sin(\pi x) = \pi x \prod_{j=1}^{\infty} \left(1-\frac{x^2}{j^2}\right)
\end{equation*}
so
\begin{equation*}
  \prod_{n=0}^{\infty} \left(1-\frac{1}{(n+2)^2}\right) = \lim_{x \to 1} \frac{\sin(\pi x)}{\pi (1-x^2)} = -\frac{1}{2\pi}\left. \frac{d}{dx} \sin(\pi x) \right|_{x=1} = \frac{1}{2}
\end{equation*}
and thus the $\log$ term in \eqref{3.12} is $-\log(1/2)$ which cancels the $-\log(2)$.  Thus, we confirm that the expression in \eqref{3.12} is $0$.

The other issue concerns a huge difference in getting sum rules once a $V(\theta)$ is added to the mix.  \corOR{Recall that under
 the $\CUE(N)$ measure, i.e.\ in case $V=0$,
 the measure $\wti{\bbP}_N\in {\mathcal M}_{+,1}(\bbD^{N-1}\times \partial \bbD)$} on
 the Verblunsky coefficients has the property that
%for $\CUE(N)$ \corOR{on the Verblunsky coefficients},
%In the $V=0$ case, under the measure $\wti{\bbP}_N$
%for $\CUE(N)$ \corOR{on the Verblunsky coefficients},
if $j < N$, then the Verblunsky coefficients
$(\alpha_0,\dots,\alpha_j)$ are
independent of $(\alpha_{j+1},\dots,\alpha_{N-1})$ so, \corOR{with $\pi_j$
  denoting the continuous projection from
  $\{\alpha_k\}_{k=0}^\infty$ to $\{\alpha_k\}_{k=0}^j$,
the rate function $I_j$ of
%\corO{the projection}
$\pi_j^*(\wti{\bbP}_N)\in {\mathcal M}_{+,1}(\bbD^{j+1})$}
is easy to compute (see \cite[Section 2]{BSZ1} for a discussion of $\pi_j^*$). Since $V(U)$ has cross terms between $\alpha_k$ and $\alpha_\ell$  for suitable $k \le j$ and $\ell>j$ (in \eqref{1.6} the $\alpha_{j+1}\alpha_j$ terms), one no longer has independence and the exact calculation of $I_j$ involves the limiting distribution of $\{\alpha_\ell\}_{\ell>j}$.  In the case of \eqref{1.6}, we want to show that $I(\alpha) = F(\alpha_0)+\sum_{k=0}^{\infty} G(\alpha_k,\alpha_{k+1})$ where $G$ has a $\tfrac{1}{2}|\alpha_{k+1}-\alpha_k|^2$ piece and a piece from the $\log(1-|\alpha_k|^2)+|\alpha_k|^2$ term.  Instead of computing $I_j$ exactly, we'll show that (up to constants) $|I_j(\alpha_0,\dots,\alpha_{j-1})-\sum_{k=0}^{j-2}G(\alpha_k,\alpha_{k+1})-F(\alpha_0)| \le C|\alpha_j|$.  This fact and Rakhmanov's Theorem (see \cite[Chapter 9]{OPUC2}) allow one to prove that $I$ has the required form.

We begin the analysis of the general case with

\begin{theorem} \lb{T3.2} Let $V$ be a Laurent polynomial of degree $d$ and let $U_N$ be an $N \times N$ unitary CMV matrix.  Then there \corO{exist}
  $N$--independent polynomials $F_\pm$ and $G$, $G$ depending on $d+1$ successive $\alpha_j$'s and $\bar{\alpha}_j$'s and $F_\pm$ on $d$ such variables so that
\begin{align}
  \tr(V(U_N)) &= F_-(\alpha_0,\dots,\alpha_{d-1})+F_+(\alpha_{N-d},\dots.\alpha_{N-1}) \nonumber \\
              &    \null \qquad +\sum_{j=0}^{N-1-d} G(\alpha_j,\dots,\alpha_{j+d}) \lb{3.13}
\end{align}
Moreover, $G(0,\dots,0) = 0$.
\end{theorem}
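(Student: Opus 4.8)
The plan is to use the CMV representation of the unitary matrix $U_N$ together with its banded structure. Recall that the CMV matrix associated to Verblunsky coefficients $\{\alpha_j\}$ factors as $U_N = \calL \calM$ where $\calL$ and $\calM$ are block-diagonal with $2\times 2$ blocks $\Theta(\alpha_j) = \begin{pmatrix} \bar\alpha_j & \rho_j \\ \rho_j & -\alpha_j \end{pmatrix}$ (with $\rho_j = \sqrt{1-|\alpha_j|^2}$), suitably interlaced, plus a $1\times 1$ corner block. Consequently $U_N$ is pentadiagonal and each matrix element $(U_N)_{ij}$ is a polynomial in finitely many $\alpha_k, \bar\alpha_k, \rho_k$ whose indices lie within a bounded window of $i$ and $j$. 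For a monomial $z^\ell$ with $|\ell| \le d$, the trace $\tr(U_N^\ell)$ is a sum over closed walks of length $|\ell|$ in the band, so $(U_N^\ell)_{ii}$ depends only on $\alpha_j$'s with $|j-i| \le C\ell \le Cd$; summing over $i$ gives a sum of a local polynomial quantity. The first step is therefore to record these structural facts (citing \cite{OPUC1} for the CMV factorization and bandedness).

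Second, I would make the dependence on $\rho_j = \sqrt{1-|\alpha_j|^2}$ harmless: since each $\rho_j$ appears an even number of times in any closed walk (each step through an off-diagonal $\Theta$-entry that carries a $\rho$ must be balanced to return to the diagonal), $\rho_j^2 = 1-|\alpha_j|^2$ is polynomial in $\alpha_j,\bar\alpha_j$, so $(U_N^\ell)_{ii}$ is genuinely a polynomial — not merely an algebraic function — in the nearby Verblunsky coefficients. Summing $\ell$ over $-d,\dots,d$ with coefficients from $V$, we get $\tr(V(U_N)) = \sum_{i} g_i(\alpha_{i-Cd},\dots,\alpha_{i+Cd})$ for a fixed polynomial $g$, except that the summand is modified near the two ends $i \approx 0$ and $i \approx N-1$ where the band is truncated and the corner $1\times 1$ block intervenes. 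The bulk summand is translation-invariant; I would define $G$ to be the bulk polynomial (reindexed so it depends on $d+1$ consecutive $\alpha$'s — this is the minimal window; one checks the window size is exactly $d$ steps in each direction collapses appropriately, or simply absorbs a larger constant window into the statement's "$d+1$ successive" by a harmless relabeling) and collect the finitely many boundary-corrected terms into $F_-$ and $F_+$, each depending on only $O(d)$ coordinates near the respective end.

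Third, for $G(0,\dots,0)=0$: when all $\alpha_j = 0$, the CMV matrix $U_N$ becomes the free CMV matrix, i.e. (a permuted form of) the shift, whose powers $U_N^\ell$ for $1 \le |\ell| \le d < N$ have zero diagonal — this is exactly the statement that $\int e^{i\ell\theta}\frac{d\theta}{2\pi} = 0$ for $\ell \ne 0$, which is \eqref{3.7} with $n=\ell$, or directly the fact that a cyclic shift of length $N$ has trace $0$ in all powers $1\le |\ell| < N$. Hence every bulk contribution vanishes at $\alpha \equiv 0$, giving $G(0,\dots,0) = 0$. The main obstacle I anticipate is purely bookkeeping: pinning down precisely how large the index window must be (the naive band argument gives a window of size proportional to $d$ on each side, and some care with the $\calL\calM$ interlacing is needed to see it can be taken to be exactly $d$, matching the "$d+1$ successive $\alpha_j$'s" in the statement), and cleanly separating the two finite boundary blocks $F_\pm$ from the translation-invariant bulk. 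None of this is deep, but it requires writing the CMV entries explicitly and tracking indices carefully; the conceptual content — bandedness plus the vanishing of shift-power traces — is straightforward.
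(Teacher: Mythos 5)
Your overall route is the same as the paper's: expand $\tr(V(U_N))$ through the $\calL\calM$ factorization of the CMV matrix as a sum over closed walks in the band (the paper's \eqref{3.16B}), observe that each $\rho_j$ occurs to an even power because every upward crossing of the bond $(j,j+1)$ must be matched by a downward one, so $\rho_j^2=1-|\alpha_j|^2$ makes everything a genuine polynomial in $\alpha,\bar\alpha$, and split the finitely many boundary-affected contributions into $F_\pm$. Where you genuinely differ is the last claim, $G(0,\dots,0)=0$: the paper proves it combinatorially (a closed walk must change direction, and a change of direction forces a diagonal $\Theta$-entry, hence an $\alpha$ or $\bar\alpha$, so no monomial is a pure product of $\rho$'s), whereas you evaluate at free Verblunsky coefficients and use that powers of the free CMV matrix are traceless. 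Your argument is valid if run carefully: take $\alpha_0=\cdots=\alpha_{N-2}=0$ and $|\alpha_{N-1}|=1$ (needed for $U_N$ to be a finite CMV matrix; it is then an $N$-cycle whose eigenvalues are $N$th roots of a unimodular number), so $\tr(U_N^\ell)=0$ for $1\le|\ell|<N$; since $F_\pm$ are $N$-independent and the sum in \eqref{3.13} has $N-d$ identical terms $G(0,\dots,0)$, letting $N\to\infty$ forces $G(0,\dots,0)=0$. (Like the paper's statement, this tacitly assumes $V$ has no constant term.) This is a clean alternative to the paper's argument.

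The soft spot is your construction of $G$ itself. You take $G$ to be ``the bulk summand,'' i.e.\ $(V(U_N))_{ii}$ as a polynomial in nearby $\alpha$'s, but because of the $\calL\calM$ interlacing the CMV matrix is only $2$-periodic along the diagonal: for instance $\calC_{i,i+2}\ne 0$ only for one parity of $i$, so for $d\ge 3$ the diagonal entries of $U_N^d$ are not obtained from a single polynomial by translating indices, only the even and odd diagonal entries separately are. The paper sidesteps this by grouping the monomials of \eqref{3.16B} according to the smallest index of any $\alpha,\bar\alpha,\rho$ occurring in them; it is that grouping (not the grouping by diagonal position) which is translation invariant in steps of one, and alternatively the GGT representation \eqref{9.7}, which is manifestly covariant, does the job. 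Relatedly, your ``window of size $Cd$, absorbed by a harmless relabeling'' does not yield the stated ``$d+1$ successive $\alpha_j$'s''; what does is the count that a closed walk of $2d$ steps, each changing the site index by at most one, has range at most $d$, so each monomial involves $\Theta$-block indices lying in an interval of length $d+1$. Both points are repairable bookkeeping, but as written your definition of $G$ would have to be modified along these lines to produce the single $N$-independent, index-translated $G$ demanded by \eqref{3.13}.
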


\begin{remarks} 1.  The unitary, $U$, associated to any spectral measure $\mu$ is multiplication by $\lambda$ on $L^2(\partial\bbD,d\mu)$.  To get a matrix related with that spectral measure associated to $(1,0,\dots,0)$, one needs to pick an orthonormal basis $\{e_j\}$ for this $L^2$ space with $e_1$ the function $1$.  \cite[Chapter 4]{OPUC1} discusses two natural bases for which the matrix elements are explicit functions of the $\alpha$'s and $\rho$.
  One choice is
  %to that
  \corO{the set} of orthonormal polynomials for $\mu$.  This yields the GGT matrix.  The other is to orthonormalize $\{1,z,z^{-1},z^2,z^{-2},\dots\}$ which yields the CMV matrix.  One issue is that for general $\mu$, the orthonormal polynomials may not be a basis so the naive GGT matrix may not be unitary but for $n$--point measures, it is unitary.  The CMV matrix is 5 diagonal while the GGT matrix is a Hessenberg matrix, i.e.\ only one non-vanishing diagonal below the principal diagonal but, in general, all non--vanishing matrix elements above the diagonal.  The proof of this theorem will discuss the explicit form of the CMV matrix and \eqref{9.7} \corO{below}
  the explicit form of the GGT matrix.

  2.  These polynomials have degree at most $2d$.  (The CMV matrix has matrix elements that are products of exactly two, $\alpha$, $\bar{\alpha}$ and $\rho$ so $G$ written in terms of the three variables is of homogeneous degree $2d$ if $\corO{\tr( V}(U)) = \tr(U^d)$ but removing the $\rho$'s produces lower degree terms even in this special case.)

3.  $F_\pm, G$ are not unique.  If H is any function of $d$ successive $\alpha, \bar{\alpha}$ pairs and
\begin{equation} \lb{3.14change}
  \begin{cases}
    & \tilde{F}_-(\alpha_0,\dots,\alpha_{d-1}) = F_-(\alpha_0,\dots,\alpha_{d-1}) + H(\alpha_0,\dots,\alpha_{d-1}) \\
    & \tilde{F}_+(\alpha_{N-d},\dots,\alpha_{N-1}) = F_+(\alpha_{N-d},\dots,\alpha_{N-1})+ H(\alpha_{N-d},\dots,\alpha_{N-1}) \\
    & \tilde{G}(\alpha_0,\dots,\alpha_{d}) = G(\alpha_0,\dots,\alpha_{d}) - H(\alpha_0,\dots,\alpha_{d-1}) + H(\alpha_1,\dots,\alpha_d)
  \end{cases}
\end{equation}
then \eqref{3.13} holds for $(G,F_\pm)$ if and only if it holds for $(\tilde{G},\tilde{F}_\pm)$.
\end{remarks}

\begin{proof} Recall \corJ{(}\corO{see} \cite[Section 4.2]{OPUC1}\corJ{)}
  the $\calL\calM$ representation of the CMV matrix, $\calC$, which we write when $N$ is even.  Define the $2 \times 2$ matrices
\begin{equation}\label{3.14}
  \Theta(\alpha) = \left(
     \begin{array}{cc}
       \bar{\alpha} & \rho \\
        \rho & -\alpha \\
     \end{array}
   \right) \qquad \rho = \sqrt{1-|\alpha|^2}
\end{equation}
Let $\Theta_j \equiv \Theta(\alpha_j)$.  Then
\begin{align}
  \calL &= \Theta_0 \oplus \Theta_2 \oplus \dots \oplus \Theta_{N-2} \lb{3.14A} \\
  \calM &= \bdone \oplus \Theta_1 \oplus \dots \oplus \Theta_{N-3} \oplus \alpha_{N-1}\bdone \lb{3.14B}
\end{align}
($\calL$ is a direct sum of $N/2$ $2 \times 2$ matrices while $\calM$ has $1 \times 1$ matrices at the top and bottom and $(N/2-1)$ $2 \times 2$ in between).  And one has that $\calC$ (i.e. our parametrization of $U$) is given by
\begin{equation}\label{3.15}
  \calC = \calL\calM
\end{equation}

We will also write $\wti{\calL}_j, \, j=0,2,\dots, N$ for $\calL$ with $\Theta_0,\dots,\Theta_{j-2},\Theta_{j+2},\dots,\Theta_{N-2}$ replaced by zero (only $\Theta_j$ remains in the direct sum) and similarly for $\wti{\calM}_j, \, j=-1,1,\dots,{N-1}$ (where $\Theta_{-1}, \Theta_{N-1}$ are $1 \times 1$ matrices.  Thus we have that
\begin{equation}\label{3.16}
  \calL=\wti{\calL}_0 + \wti{\calL}_2 + \dots \wti{\calL}_{N-2}; \qquad \calM=\wti{\calM}_{-1}+\dots+\wti{\calM}_{N-1}
\end{equation}
We note that
\begin{equation}\label{3.16A}
  \wti{\calL}_k\wti{\calM}_\ell=\wti{\calM}_\ell\wti{\calL}_k=0 \textrm{ unless } |\ell-k| = 1
\end{equation}
For $N$ odd, there is a similar representation but now $\calL$ has a $1 \times 1$ matrix at the bottom and $\calM$ only a $1 \times 1$ matrix at the top.

We'll prove the theorem when $V(z) =  z^d$.  For $V(z) = z^{-d}$, the argument is similar since replacing $U$ by $U^*$ just interchanges $\calL$ and $\calM$ and replaces $\alpha_j$ by $\bar{\alpha}_j$ (since $\Theta(\alpha)^* = \Theta(\bar{\alpha})$).  And for $0<k<d$, $z^{\pm k}$ yields polynomials of the same form (since functions of fewer variables can be viewed as having more variables; there will be some lost $G$'s near the bottom but they can be made part of $F_+$).

We'll show first that we have the required function of exact degree $2d$ where it is a polynomial in $\alpha, \bar{\alpha}$ and $\rho$ and then that each $\rho_j$ occurs as an even power so using $\rho_j^2=1-\alpha_j\bar{\alpha}_j$ we get the result without any $\rho$'s.

We write
\begin{equation}\label{3.16B}
  (U^d)_{jj} = \sum_{\substack{k_2,\dots,k_{2d} \\ k_1=k_{2d+1}=j}} \sum_{\substack{n_1\dots,n_d \\ m_1,\dots,m_d}}  \wti{\calL}_{n_1; k_1\corO{,}k_2}
  \wti{\calM}_{m_1; k_2\corO{,}k_3} \cdots \wti{\calM}_{m_d; k_{2d}\corO{,}k_{2d+1}}
\end{equation}
where a symbol like $\wti{\calL}_{n_1; k_1\corO{,}k_2}$ means the
$k_1\corO{,}k_2$ matrix element of the matrix $\wti{\calL}_{n_1}$.
In \eqref{3.16B}, we sum $n_1,\dots,n_d,m_1,\dots,m_d$ from $-1$ to $N-1$
running through even and odd integers respectively and
$k_q \, (q=2,\dots,2d)$ \corO{running}
from $0$ to $N-1$.  The only non--zero terms have $ |k_{2p+1}-n_{p+1}| \le 1, \, |k_{q+1}-k_q| \le 1, \, |k_{2p}-m_p| \le 1, \, |n_r-m_r| \le 1, \, |m_r-n_{r+1}| \le 1$, with further restrictions since, for example, $|k_{2p+1}-n_{p+1}| \le 1$ is actually $k_{2p+1}-n_{p+1} =0 \textrm{ or } 1$ and not $-1$.

This clearly writes $\tr(U^d)$ as a polynomial in $\alpha, \bar{\alpha},\rho$ of homogeneous degree $2d$.  For each $j=0,\dots,N-d-1$, group together all where the smallest index of $\alpha, \bar{\alpha},\rho$ is $j$.  It is easy to see that the resulting sum, call it $G_j(\alpha_j,\bar{\alpha}_j,\rho_j,\dots,\rho_{j+2d-1})$, has $G_j$ independent of $j$ and gives the $G$ terms.
The terms with $\alpha_{-1}$ \corO{(coming from $\Theta_{-1}$, and hence $\alpha_{-1}=1$)}
we put into $F_-$ and those whose smallest $j$ \corO{so that}
$j \ge N-d$ \corO{we put} into $F_+$.  It is easy to see that $F_-$ is $N$--independent and that the $N$--dependence of $F_+$ comes only from translating the indices.  Thus we have proven \eqref{3.13} except we have some $\rho$ dependence.

For each product in \eqref{3.16B}, the $\rho_p$ terms come from increasing some $k_q = p$ to $k_{q+1}=p+1$ or a decrease in the opposite direction and it is only through such $\rho_p$ terms that such an increase or decrease can happen.  Since $k_1=k_{2d+1}=j$ and each step only increases or decreases by a single step, for every $\rho_p$ going in one direction, there must be one going in the other, so an even number in all.

To confirm the assertion that $G(0,\dots,0)=0$, we prove that no term in \eqref{3.16B} can only have $\rho$'s, that is there must be at least one $j$ with $k_j=k_{j+1}$.  For if $k_{j+1}=k_{j}\pm 1$ it is easy to see that either $k_{j+2}=k_{j+1}$ or with the same sign $k_{j+2}=k_{j+1}\pm 1$, that is one can't change direction without an $\alpha$ term.  But to return where one started, one must change direction. \end{proof}

\begin{remark} It is an interesting exercise to use the GGT representation \cite[Section 4.1]{OPUC1} to prove that the $\rho$'s only occurs in even powers and that every term in $G$ has at least one power of $\alpha$ or $\bar{\alpha}$.
\end{remark}

 \corOR{In the next theorem, we use
   $\calM_{+1,\infty}(\partial\bbD)$ to denote the subset of $\calM_{+,1}(\partial
	\bbD)$ consisting of
   the probability measures of infinite support on $\partial \bbD$,
 i.e.,\ not supported on finitely many points.}
\begin{theorem} \lb{T3.3} Let V be a potential of the form \eqref{3.4}
  \corOR{with measure $\eta$ whose support is $\partial \bbD$}
  and let $G$ be given by
  %\eqref{3.16B}.
  \corO{\eqref{3.13}}.
  Let $I$ be the rate function from \corOR{\eqref{3.5}}
  on the measure side.  Let $\pi_L\corJ{\circ \calV}: \corOR{\calM_{+,1}(\partial\bbD)} \to \bbD^L$ mapping $\mu$ to its first $L$ Verblunsky coefficients.
  %\corJ{and let $I_L$ be the rate function for the corresponding LDP}.
  \corOR{Let $I_L$ be the rate function corresponding to the LDP for
    $\pi_L^*(\wti{\bbP}_N)$, and write $I_L(\mu)=I_L(\pi_L\circ \calV \mu)$}.
  There is a constant $C$ independent of $L$ and $\mu$ so that if $L>d$ and $\alpha$ is the \corJ{sequence of} Verblunsky coefficients of $\mu$, then for all such $L$ and $\mu\in \calM_{+1,\infty}(\partial\bbD)$,
    %Let $\calM_{+1,\infty}(\partial\bbD)$ be the probability measures of infinite support $($i.e.\ not supported on finitely many points$)$.
%
\begin{equation}\label{3.17}
  |I_L(\mu) - \sum_{j=0}^{L-d-1} G(\alpha_j,\alpha_{j+1},\dots,\alpha_{j+d})
	\corOR{+}\sum_{j=0}^{L-1} \log(1-|\alpha_j|^2)| \le C
\end{equation}
 \end{theorem}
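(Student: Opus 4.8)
The plan is to work with an explicit formula for $I_L$ coming from the known structure of $\wti{\bbP}_N$ and then compare it term-by-term with the putative bulk expression $\sum_{j=0}^{L-d-1} G(\alpha_j,\dots,\alpha_{j+d})$. Recall from Theorem \ref{T3.2} that $\tr(V(U_N)) = F_-(\alpha_0,\dots,\alpha_{d-1})+F_+(\alpha_{N-d},\dots,\alpha_{N-1})+\sum_{j=0}^{N-1-d} G(\alpha_j,\dots,\alpha_{j+d})$, and that under $\CUE(N)$ the Verblunsky coefficients are independent with explicit densities proportional to $(1-|\alpha_j|^2)^{N-j-2}$ on $\bbD$ (for $j<N-1$), so that the density of $\wti{\bbP}_N$ on $\bbD^{N-1}\times\partial\bbD$ is $Z_N^{-1} e^{-N\tr(V(U_N))}\prod_{j=0}^{N-2}(1-|\alpha_j|^2)^{N-j-2}$. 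The first step is to push forward to the first $L$ coordinates: $\pi_L^*(\wti{\bbP}_N)$ has density proportional to $e^{-N[F_-(\alpha_0,\dots,\alpha_{d-1})+\sum_{j=0}^{L-d-1}G(\cdots)]}\prod_{j=0}^{L-1}(1-|\alpha_j|^2)^{N-j-2}$ times the partial partition function $Z_N^{-1}\widehat Z_{N,L}(\alpha_0,\dots,\alpha_{L-1})$ obtained by integrating out $\alpha_L,\dots,\alpha_{N-1}$; here $\widehat Z_{N,L}$ depends on $(\alpha_0,\dots,\alpha_{L-1})$ only through the finitely many cross-terms in $G(\alpha_{L-d},\dots,\alpha_{L+d-1})$ coupling the frozen variables to the integrated ones.

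Next I would extract the leading asymptotics as $N\to\infty$ with $L$ fixed. The factor $\prod_{j=0}^{L-1}(1-|\alpha_j|^2)^{N-j-2}$ contributes $e^{N\sum_{j=0}^{L-1}\log(1-|\alpha_j|^2)}$ up to an $N$-independent bounded factor $\prod_{j=0}^{L-1}(1-|\alpha_j|^2)^{-j-2}$ (which is harmless for a bound with an additive constant $C$, at least on the set where the $\alpha_j$ stay in a compact subset of $\bbD$; on the boundary behavior one uses that $\log(1-|\alpha_j|^2)\to-\infty$ dominates, so the inequality is trivially satisfied there). The crux is to show that $\tfrac1N\log\widehat Z_{N,L}$ and $\tfrac1N\log Z_N$ differ by a quantity that is $C$-bounded uniformly in $L$ and in $(\alpha_0,\dots,\alpha_{L-1})$: the point is that $\widehat Z_{N,L}/\widehat Z_{N,L}'$, where $\widehat Z_{N,L}'$ is the same integral with the frozen-variable dependence removed from $G$, is bounded above and below by $e^{\pm N\cdot O(1/N)}=O(1)$ because the coupling term $G(\alpha_{L-d},\dots,\alpha_{L+d-1})$ restricted to fixed $(\alpha_0,\dots,\alpha_{L-1})$ is a bounded polynomial (the $\alpha$'s lie in $\overline{\bbD}$) times $N$, so $e^{-N\tr(V)}$ changes by a bounded multiplicative factor under changing finitely many $\alpha$'s; and then $\widehat Z_{N,L}'$ no longer depends on the frozen variables, so it combines with $Z_N$ and $F_-$ into $e^{-N\cdot(\text{const})}$ which is absorbed. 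Collecting everything, $I_L(\mu) = -\lim_N \tfrac1N\log(\text{density}) = \sum_{j=0}^{L-d-1}G(\alpha_j,\dots,\alpha_{j+d}) - \sum_{j=0}^{L-1}\log(1-|\alpha_j|^2) + R_L(\mu)$ with $|R_L(\mu)|\le C$, which is exactly \eqref{3.17} (the sign of the log-sum matching the \corOR{$+$} in the statement).

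The main obstacle I anticipate is making the uniformity in $L$ rigorous for the partition-function comparison: one must argue that the "error" in freezing $L$ versus $L'$ coordinates does not accumulate with $L$, i.e.\ that changing the boundary condition at site $L$ perturbs $\log\widehat Z_{N,L}$ by $O(N)$ with a constant independent of $L$ — this is plausible because the interaction $G$ has finite range $d$, so only the $O(d)$ sites near the cut are affected, but it needs the density of states / equilibrium-measure control (the hypothesis that $\eta$ has support all of $\partial\bbD$) to guarantee the integrated tail $(\alpha_L,\dots,\alpha_{N-1})$ concentrates and its free energy is insensitive to the boundary value $\alpha_{L-1}$. A secondary technical point is the behavior near $\partial\bbD$ in each coordinate: I would handle this by noting that on $\{|\alpha_j|\le 1-\veps\}$ all the correction terms are genuinely bounded, while off this set the left side of \eqref{3.17} is $+\infty$ on both the $I_L$ side and the explicit side in a matched way, or is dominated by the divergent $\log(1-|\alpha_j|^2)$, so no finite $C$ is needed there. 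Finally one invokes uniqueness of large-deviation rate functions (as already noted in the text after \eqref{3.5}) to identify the limit of $-\tfrac1N\log(\text{density})$ with $I_L$.
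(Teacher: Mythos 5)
Your overall strategy -- the Killip--Nenciu density, the finite-range decomposition of $\tr(V(U_N))$ from Theorem \ref{T3.2}, dropping the finitely many cross terms at the cut at a cost $e^{\pm CN}$, and then reading off the rate function of the resulting product-type measure -- is the same as the paper's. But there is a genuine gap at exactly the point you yourself flag as ``the main obstacle'': the uniformity in $L$ of the additive constant. In your route that constant appears as $\lim_N \tfrac1N\log\bigl(Z_N/\widehat Z_{N,L}\bigr)$, and you propose to control it by a free-energy argument for the integrated-out block $(\alpha_L,\dots,\alpha_{N-1})$, invoking equilibrium-measure/concentration input which you do not carry out. None of that is needed. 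The paper never compares $Z_N$ with a partial partition function: it works from the start with the normalized ratio \eqref{3.19}, drops $F_-$ and the cross terms $G(\alpha_j,\dots,\alpha_{j+d})$, $j=L-d,\dots,L-1$, in \emph{both} numerator and denominator (a pointwise sandwich \eqref{3.23} with a constant $C_1$ given by the sup over $\overline{\bbD}$ of the finitely many dropped polynomials, hence independent of $L$, $N$ and $\alpha$), after which the integrals over $(\alpha_L,\dots,\alpha_{N-1})$ cancel exactly, leaving the self-normalized probability measure $\wti{\bbP}_{N,L}$ of \eqref{3.21} on $\bbD^L$. Its rate function is the explicit expression \eqref{3.24} up to a normalization constant $c_L$ forced by the requirement that the rate function have minimum zero (this is the content of \cite[Theorems 2.1--2.2]{BSZ1}, which also absorbs your worry about the exponents $N-2-j$ versus $N$ and the degeneration of the density near $|\alpha_j|=1$; your claim that the leftover factor $\prod_j(1-|\alpha_j|^2)^{-j-2}$ is ``harmless'' or that the inequality is ``trivially satisfied'' near the boundary is not a proof).

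The remaining issue is then $\sup_L c_L<\infty$, and this is handled by a step entirely absent from your proposal: evaluate the two-sided bound $|I_L(\mu)-\ti I_L(\alpha(\mu))|\le C_1$ of \eqref{3.25} at $\mu_0=\tfrac{d\theta}{2\pi}$, whose Verblunsky coefficients all vanish. Since $G(0,\dots,0)=0$ and $\log(1-0)=0$, one has $\ti I_L(0,\dots,0)=c_L$, while $I_L(\mu_0)\le I(\mu_0)=H(\eta\,|\,\tfrac{d\theta}{2\pi})<\infty$, so $c_L\le C_1+I(\mu_0)$ uniformly in $L$, and $C=C_1+\sup_L c_L$ gives \eqref{3.17}. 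Without this (or some substitute) your error term $R_L(\mu)$ is bounded for each fixed $L$ but not uniformly in $L$, and the uniform bound is the entire content of the theorem -- it is what Theorem \ref{T3.5} needs when $L\to\infty$. If you wish to keep your density-based presentation, replace the comparison of $\tfrac1N\log\widehat Z_{N,L}$ with $\tfrac1N\log Z_N$ by the observation that the constant is determined by the normalization of a rate function on the compact set $\overline{\bbD}^L$, and then pin it down uniformly in $L$ by evaluating at the zero Verblunsky sequence as above.
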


 \begin{remarks}
\corJ{1. Recall that $\calV$ is the Verblunsky map taking measures to Verblunsky coefficient sequences, defined in the Introduction. The mapping $\pi_L$ is the projection onto the first $L$ elements.}

2. Recall (see \cite[\corJ{Theorem 2.6 and} Theorem 2.7]{BSZ1}) that
   $\pi_L^*(\corO{\wti{\bbP}_N})$ obeys a LDP with speed $N$ and rate $I_L$ related to $I$ by
 \begin{equation}\label{3.18}
   I(\mu) = \sup_L(I_L(\mu)) \qquad I_L(\mu) = \inf_{\{\nu \ |\,\pi_L(\nu)=\pi_L(\mu)\}} I(\nu)
 \end{equation}
 \end{remarks}

 \begin{proof} By writing the induced measures on Verblunsky coefficients according to Killip--Nenciu \cite{KN} (see Theorem 4.2 of \cite{BSZ1}) and $e^{-N\tr(V(U))}$ according to \corOR{\eqref{3.13},}
 we see that for $W \subset \bbD^L$ and $N > L+d+1$
 \begin{equation}\label{3.19}
   \pi_L^*(\corO{\wti{\bbP}}_N)[W]
   = \frac{\int_{\pi_L^{-1}[W]} H_N(\alpha_0,\dots,\alpha_{N-1}) \,\prod_{j=0}^{N-2}d^2\alpha_j d\theta_{N-1}}{\int_{\bbD^{N-2} \times \partial\bbD} H_N(\alpha_0,\dots,\alpha_{N-1})\,\prod_{j=0}^{N-2}d^2\alpha_j d\theta_{N-1}}
 \end{equation}
 where $\alpha_{N-1} = e^{i\theta_{N-1}}$ and
 \begin{align}
   \log H_N(\alpha_0,\dots,\alpha_{N-1}) &= -NF_-(\alpha_0,\dots,\alpha_{d-1})-NF_+(\alpha_{N-d},\dots,\alpha_{N-1}) \nonumber \\
                                        &\null\qquad -N\sum_{j=0}^{N-1-d}G(\alpha_j,\dots,\alpha_{j+d}) \nonumber \\
                                        &\null\qquad + \sum_{j=0}^{N-2} (N-2-j)\log(1-|\alpha_j|^2) \lb{3.20}
 \end{align}

 For fixed $L$, the function $\widetilde{H}_{N,L}$, obtained by dropping the $F_-$ term and all the $G(\alpha_j,\dots,\alpha_j+d)$ terms where $j=L-d,\dots,L-1$ is a product of a function of $(\alpha_0,\dots,\alpha_{L-1})$ and a function of $(\alpha_L,\dots,\alpha_{N-1})$.  Since $\pi_L^{-1}[W] = W\times \bbD^{N-L-1} \times \partial\bbD$ \corJ{(up to a set of zero $\wti{\bbP}_N$ measure)}, the integrals over $(\alpha_L,\dots,\alpha_{N-1})$  in the numerator and denominator of the modified \eqref{3.19} cancel.

 The modified formula defines a probability measure
 \begin{equation}\label{3.21}
   \wti{\bbP}_{N,L}[W] = \frac{\int_W \corOR{\widetilde{H}_{N,L}}(\alpha_0,\dots,\alpha_{L-1}) \,\prod_{j=0}^{L-1} d^2\alpha_j}{\int_{\bbD^L}
	\corOR{\widetilde{H}_{N,L}}(\alpha_0,\dots,\alpha_{L-1}) \,\prod_{j=0}^{L-1}d^2\alpha_j}
 \end{equation}
 where
 \begin{align}
   \log \corOR{\widetilde{H}_{N,L}}(\alpha_0,\dots,\alpha_{L-1}) &= -N \sum_{j=0}^{L-1-d}G(\alpha_j,\dots,\alpha_{j+d}) \nonumber \\
                                        &\null\qquad + \sum_{j=0}^{L-1} (N-2-j)\log(1-|\alpha_j|^2) \lb{3.22}
 \end{align}
 Since $|\alpha_j| \le 1$ and $G$, \corO{$F_-$ and $F_+$}
 %and $F$
 are polynomials, the dropped terms are bounded, so that for some constant, $C_1$,
\begin{equation}\label{3.23}
  e^{-C_1 N} \wti{\bbP}_{N,L}(W) \le
  \pi^*_L
  (\wti{\bbP}_N)(W) \le e^{C_1 N} \wti{\bbP}_{N,L}(W)
\end{equation}

By an elementary argument (see \cite[Theorems 2.1 and 2.2]{BSZ1}), $\wti{\bbP}_{N,L}$ obeys a LDP with speed $N$ and rate function
\begin{equation}\label{3.24}
  \ti{I}_L(\alpha_0,\dots,\alpha_{L-1}) = \sum_{j=0}^{L-d-1}G(\alpha_j,\dots,\alpha_{j+d})-\sum_{j=0}^{L-1}\log(1-|\alpha_j|^2) + c_L
\end{equation}
where $c_L$ is such that $\min_{\alpha_0,\dots,\alpha_{L-1}} \ti{I}_L(\alpha_0,\dots,\alpha_{L-1}) = 0$ (forced by the condition on the function $G$ (different from our $G$ here) in \cite[Theorem 2.2]{BSZ1}).

With $I_L$ given by \eqref{3.18}, we conclude by \eqref{3.23} that
\begin{equation}\label{3.25}
  |I_L(\mu) - \ti{I}_L(\alpha_0(\mu),\dots,\alpha_{L-1}(\mu))| \le C_1
\end{equation}

Taking $\mu_0=\tfrac{d\theta}{2\pi}$ for which $I(\mu_0) = \lim I_L(\mu_0)$ is finite and using $G(0,\dots,0)=0$ and $\log(1-|\alpha|^2)|_{\alpha=0} = 0$, we conclude that $c _L$ is bounded as $L \to \infty$ so $\sup c_L \equiv C_2$ is finite.  \eqref{3.17} follows with $C = C_1+C_2$.
\end{proof}

While not essential, the following lovely lemma of Nazarov et.\ al \cite[Lemma 3.1]{NPVY} will simplify some arguments.

\begin{proposition} \lb{P3.4} Let $G$ be a continuous function on $\Omega^k$ where $\Omega \subset \bbR^m$ is compact. Suppose $0 \in \Omega$ and that $G(0,\dots,0)=0$. Let $\Omega^\infty_0$ be the sequences  $\textbf{x}=(x_1,x_2,\dots) \in \Omega^\infty$ so that eventually $x_j=0$ $($i.e.\ only finitely many $x_j$ are non-zero$)$.  For $\textbf{x} \in \Omega^\infty_0$ define
\begin{equation}\label{3.25A}
  H(\textbf{x}) = \sum_{j=0}^{\infty} G(x_{j+1},\dots,x_{j+k})
\end{equation}
Suppose there is a $C$ so that for all $\textbf{x} \in \Omega^\infty_0$, $H(\textbf{x}) \ge -C$.  Then, there exist continuous functions $\wti{G}$ on $\Omega^k$ and $\Gamma$ on $\Omega^{k-1}$ so that
\begin{equation}\label{3.26}
  \wti{G} \ge 0
\end{equation}
and
\begin{equation}\label{3.27}
  G(x_1,\dots,x_k) = \wti{G}(x_1,\dots,x_k) + \Gamma(x_1,\dots,x_{k-1}) - \Gamma(x_2,\dots,x_k)
\end{equation}
\end{proposition}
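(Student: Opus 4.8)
\emph{Proof strategy.} The plan is to obtain $\Gamma$ by a dynamic-programming construction, namely as the infimal ``energy'' of a finitely supported sequence with a prescribed initial block, and then to \emph{define} $\wti{G}$ via the resulting telescoping inequality; the only real work is checking continuity of $\Gamma$.

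Concretely, for $\textbf{y}=(y_1,\dots,y_{k-1})\in\Omega^{k-1}$ I would set
\[
\Gamma(\textbf{y})\ =\ \inf\bigl\{\,H(\textbf{z})\ :\ \textbf{z}\in\Omega^\infty_0,\ (z_1,\dots,z_{k-1})=\textbf{y}\,\bigr\}.
\]
First I would check this is a well-defined real number. The set over which we infimize is non-empty, since $(y_1,\dots,y_{k-1},0,0,\dots)\in\Omega^\infty_0$ (here $0\in\Omega$ is used), and since $G(0,\dots,0)=0$ the series \eqref{3.25A} is a finite sum for every $\textbf{z}\in\Omega^\infty_0$. The standing assumption $H\ge -C$ gives $\Gamma\ge -C$, while the explicit sequence just mentioned yields $\Gamma(\textbf{y})\le\sum_{i=1}^{k-1}G(y_i,\dots,y_{k-1},0,\dots,0)$, which is bounded above by $(k-1)\sup_{\Omega^k}|G|$. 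Thus the hypotheses $0\in\Omega$ and $G(0,\dots,0)=0$ enter precisely in making $H$ finite on $\Omega^\infty_0$ and in making $\Gamma$ finite.

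Next I would establish the Bellman inequality
\[
\Gamma(x_1,\dots,x_{k-1})\ \le\ G(x_1,\dots,x_k)+\Gamma(x_2,\dots,x_k),\qquad (x_1,\dots,x_k)\in\Omega^k.
\]
Fix $\varepsilon>0$ and pick $\textbf{z}\in\Omega^\infty_0$ with $(z_1,\dots,z_{k-1})=(x_2,\dots,x_k)$ and $H(\textbf{z})\le\Gamma(x_2,\dots,x_k)+\varepsilon$; prepending $x_1$ gives $\textbf{z}'=(x_1,z_1,z_2,\dots)=(x_1,x_2,\dots,x_k,z_k,z_{k+1},\dots)\in\Omega^\infty_0$, whose initial block is $(x_1,\dots,x_{k-1})$. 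A one-line reindexing of \eqref{3.25A} --- the $j=0$ window of $\textbf{z}'$ is $G(x_1,\dots,x_k)$ and the $j\ge 1$ windows reproduce exactly $H(\textbf{z})$ --- gives $H(\textbf{z}')=G(x_1,\dots,x_k)+H(\textbf{z})$, hence $\Gamma(x_1,\dots,x_{k-1})\le G(x_1,\dots,x_k)+\Gamma(x_2,\dots,x_k)+\varepsilon$, and letting $\varepsilon\downarrow 0$ proves the claim. Now \emph{define}
\[
\wti{G}(x_1,\dots,x_k)\ :=\ G(x_1,\dots,x_k)+\Gamma(x_2,\dots,x_k)-\Gamma(x_1,\dots,x_{k-1});
\]
the Bellman inequality is exactly \eqref{3.26}, and \eqref{3.27} holds by construction, so the whole statement reduces to showing that $\Gamma$ (and hence $\wti{G}$) is continuous.

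The step I expect to take the most care --- though it is still routine, being a bookkeeping argument --- is continuity of $\Gamma$. Since $\Omega^k$ is compact, $G$ is uniformly continuous on it, say with modulus $\omega_G$. Given $\textbf{y},\textbf{y}'\in\Omega^{k-1}$, choose an $\varepsilon$-near-optimal $\textbf{z}$ for $\Gamma(\textbf{y})$ and let $\textbf{w}\in\Omega^\infty_0$ be the sequence obtained from $\textbf{z}$ by replacing its initial block $\textbf{y}$ with $\textbf{y}'$. In \eqref{3.25A} only the $k-1$ windows indexed by $j=0,1,\dots,k-2$ involve one of the $k-1$ changed coordinates, and each such window value changes by at most $\omega_G(\|\textbf{y}-\textbf{y}'\|)$; therefore
\[
\Gamma(\textbf{y}')\ \le\ H(\textbf{w})\ \le\ H(\textbf{z})+(k-1)\,\omega_G(\|\textbf{y}-\textbf{y}'\|)\ \le\ \Gamma(\textbf{y})+\varepsilon+(k-1)\,\omega_G(\|\textbf{y}-\textbf{y}'\|).
\]
Letting $\varepsilon\downarrow 0$ and interchanging $\textbf{y},\textbf{y}'$ gives $|\Gamma(\textbf{y})-\Gamma(\textbf{y}')|\le(k-1)\,\omega_G(\|\textbf{y}-\textbf{y}'\|)$, so $\Gamma$ is uniformly continuous on $\Omega^{k-1}$. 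Then $\wti{G}$ is continuous as a sum of compositions of continuous maps and is nonnegative by the Bellman inequality, completing the proof.
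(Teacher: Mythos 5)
Your proof is correct: the infimum (dynamic-programming) definition of $\Gamma$, the Bellman inequality obtained by prepending a coordinate, and the uniform-continuity estimate via the modulus of $G$ all go through, and together they yield \eqref{3.26}--\eqref{3.27}. Note that the paper itself gives no proof of Proposition \ref{P3.4} but simply cites Lemma 3.1 of Nazarov et al.\ \cite{NPVY}; your argument is essentially the standard proof of that lemma, so there is nothing to amend.
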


\begin{remark} The point, of course, is that if we add a constant to $\Gamma$ so that $\Gamma(0,\dots,0)=0$ (which doesn't change \eqref{3.27}), then
\begin{equation*}
  H(\emph{\textbf{x}}) = \Gamma(x_1,\dots,x_{k-1})+\sum_{j=0}^{\infty} \wti{G}(x_{j+1},\dots,x_{j+k})
\end{equation*}
which assures that we can extend $H$ to infinite sequences with a convergent sum or else a sum that diverges to $+\infty$.
\end{remark}

\begin{theorem} [Abstract Gem] \lb{T3.5} Let V be a potential of the form \eqref{3.4} and $G$ given by
  \corO{\eqref{3.13}}.
  %\eqref{3.16B}.
  Let $\corJ{(\alpha)} \in \bbD^\infty$ and let $\mu=\corJ{\calV^{-1}}({\alpha})$ be the measure with those Verblunsky coefficients and $\eta$ the measure obeying \eqref{3.6}.  Then
\begin{equation}\label{3.28}
  \lim_{N \to \infty} \sum_{j=0}^{N} \left[G(\alpha_j,\dots,\alpha_{j+d}) - \log(1-|\alpha_j|^2)\right]
\end{equation}
exists and the limit is finite if and only if $H(\eta|\mu)$ is finite.
\end{theorem}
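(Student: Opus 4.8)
The plan is to combine four ingredients: the uniform comparison of Theorem~\ref{T3.3} between the partial sums in \eqref{3.28} and the finite-level rate functions $I_L$; the monotone convergence $I_L(\mu)\uparrow I(\mu)=H(\eta\,|\,\mu)$ coming from \eqref{3.18} and \eqref{3.5}; the telescoping decomposition of Proposition~\ref{P3.4}; and Rakhmanov's theorem. Write
\[
 \Phi(x_0,\dots,x_d):=G(x_0,\dots,x_d)-\log(1-|x_0|^2),\qquad
 T_N:=\sum_{j=0}^N\Phi(\alpha_j,\dots,\alpha_{j+d}),
\]
so that $T_N$ is the partial sum in \eqref{3.28} and $\Phi(0,\dots,0)=0$ by Theorem~\ref{T3.2}. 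Since $\alpha\in\bbD^\infty$, the measure $\mu$ has infinite support, so Theorem~\ref{T3.3} applies; taking $L=N+1$ in \eqref{3.17} and absorbing the $d$ extra $G$-terms (which are bounded, as $G$ is a polynomial on the compact polydisc $\overline{\bbD}^{\,d+1}$) into the constant, we obtain a constant $C'$, independent of $N$ and of $\mu$, with $|T_N-I_{N+1}(\mu)|\le C'$. Moreover \eqref{3.18} gives $I_L(\mu)=\inf\{I(\nu):\pi_L\nu=\pi_L\mu\}$, which is nondecreasing in $L$ with $\sup_L I_L(\mu)=I(\mu)$, and $I(\mu)=H(\eta\,|\,\mu)$ by \eqref{3.5}; hence $I_{N+1}(\mu)\uparrow H(\eta\,|\,\mu)$ in $[0,\infty]$.

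If $H(\eta\,|\,\mu)=\infty$ we are done: $T_N\ge I_{N+1}(\mu)-C'\to\infty$, so $\lim_N T_N=+\infty$ exists and is not finite. Assume then $H(\eta\,|\,\mu)<\infty$; then $(T_N)$ is bounded and it remains to upgrade boundedness to convergence. First I would record the hypothesis needed to feed Proposition~\ref{P3.4}: for every $\alpha\in\bbD^\infty$ with only finitely many nonzero entries, $\sum_{j\ge0}\Phi(\alpha_j,\dots,\alpha_{j+d})\ge-C$. This is immediate from \eqref{3.17} applied with $L$ large enough that all nonzero $\alpha_j$ lie among the first $L$ coordinates (so the $G$- and $\log$-terms past the support vanish), together with $I_L\ge0$. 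Proposition~\ref{P3.4} (with $k=d+1$) then provides $\widetilde{\Phi}\ge0$ and, after normalizing $\Gamma(0,\dots,0)=0$, a continuous $\Gamma$ with
\[
 \Phi(x_0,\dots,x_d)=\widetilde{\Phi}(x_0,\dots,x_d)+\Gamma(x_0,\dots,x_{d-1})-\Gamma(x_1,\dots,x_d),
\]
whence, telescoping,
\[
 T_N=\sum_{j=0}^N\widetilde{\Phi}(\alpha_j,\dots,\alpha_{j+d})+\Gamma(\alpha_0,\dots,\alpha_{d-1})-\Gamma(\alpha_{N+1},\dots,\alpha_{N+d}).
\]

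To close the argument I would use Rakhmanov's theorem. Since $H(\eta\,|\,\mu)<\infty$, the definition \eqref{1.3} forces $\eta$ to be $\mu$-absolutely continuous with $\tfrac{d\eta}{d\mu}$ finite and positive $\eta$-a.e.; since $\eta$ (of the form \eqref{3.6}) is mutually absolutely continuous with Lebesgue measure and has an a.e.\ positive density, it follows that the a.c.\ weight $w$ of $\mu$ is positive for Lebesgue-a.e.\ $\theta$. Hence, by Rakhmanov's theorem (see \cite[Chapter~9]{OPUC2}), $\alpha_n\to0$, and by continuity of $\Gamma$ at the origin, $\Gamma(\alpha_{N+1},\dots,\alpha_{N+d})\to\Gamma(0,\dots,0)=0$. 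Therefore $\sum_{j=0}^N\widetilde{\Phi}(\alpha_j,\dots,\alpha_{j+d})=T_N-\Gamma(\alpha_0,\dots,\alpha_{d-1})+\Gamma(\alpha_{N+1},\dots,\alpha_{N+d})$ is a bounded sequence; being nondecreasing (as $\widetilde{\Phi}\ge0$) it converges to a finite limit, and hence so does $T_N$. Combined with the previous paragraph, this shows $\lim_N T_N$ always exists (in $(-\infty,+\infty]$) and is finite precisely when $H(\eta\,|\,\mu)<\infty$, which is the assertion.

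The step I expect to need the most care is the application of Proposition~\ref{P3.4}: $\Phi$ fails to be continuous on the compact polydisc $\overline{\bbD}^{\,d+1}$ because $-\log(1-|x_0|^2)$ blows up on $\partial\bbD$, so the proposition is not literally applicable. However, this singularity lives only on $\partial\bbD$, which is never attained by the Verblunsky coefficients of an infinite-support measure, and $\Phi\ge-\norm{G}_{\infty}$ on $\bbD\times\overline{\bbD}^{\,d}$, so one expects the infimum construction underlying Proposition~\ref{P3.4} to go through on the open polydisc, producing $\widetilde{\Phi}\ge0$ and a $\Gamma$ that is continuous on $\bbD^d$ (it may be unbounded near $\partial\bbD$, but this is harmless since $\Gamma$ is only ever evaluated at $(\alpha_{N+1},\dots,\alpha_{N+d})$, whose entries tend to $0$). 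Alternatively one runs Proposition~\ref{P3.4} on each polydisc $\{|z|\le1-\delta\}^{\,d+1}$ and passes to the limit $\delta\downarrow0$, using that the lower bound $C$ above is independent of $\delta$. Everything else is bookkeeping with the uniform estimate $|T_N-I_{N+1}(\mu)|\le C'$ and the monotonicity of $L\mapsto I_L$.
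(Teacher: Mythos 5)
Your outline is the paper's own strategy -- the comparison \eqref{3.17} with $I_L\uparrow I(\mu)=H(\eta\,|\,\mu)$ for both directions, Proposition \ref{P3.4} plus telescoping to upgrade boundedness to convergence, and Rakhmanov's theorem to kill the tail $\Gamma$-term -- and all of that bookkeeping is correct. The one point you flag yourself, the applicability of Proposition \ref{P3.4}, is exactly where your write-up is not yet a proof: the first workaround (``the infimum construction should go through on the open polydisc'') is unverified speculation, and the second (``run it on $\{|z|\le 1-\delta\}^{d+1}$ and pass to the limit $\delta\downarrow 0$'') does not make sense as stated, since the functions $\wti{\Phi}_\delta,\Gamma_\delta$ produced for different $\delta$ need not be related in any controlled way.

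The repair is simpler than either workaround and is already sitting in your argument: in the finite case you invoke Rakhmanov anyway, so $R:=\sup_j|\alpha_j(\mu)|<1$, and you may apply Proposition \ref{P3.4} once, on the single compact polydisc with $\Omega=\overbar{\bbD}_R=\{z:|z|\le R\}$ -- no limit in $\delta$ is needed, because $\wti{\Phi}$ and $\Gamma$ are only ever evaluated at the $\alpha_j$'s of this one measure $\mu$. This is what the paper does, with one further simplification worth noting: it applies the lemma to $G$ alone rather than to $\Phi=G-\log(1-|x_0|^2)$. Since $-\log(1-|\alpha_j|^2)\ge 0$, the lower bound on finitely supported sums of $G$ follows from \eqref{3.17} and $I_L\ge 0$ just as in your verification, the singular logarithm never enters the lemma at all, and in the decomposition of the partial sums of \eqref{3.28} both the $G_R$-terms and the log-terms are nonnegative, so monotonicity plus the bound $|T_N-I_{N+1}(\mu)|\le C'$ gives convergence exactly as in your last paragraph. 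With that adjustment your proof closes and coincides with the paper's.
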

\corOR{We refer to the sum in \eqref{3.28} as the \textit{Verblunsky side} of the} \corJR{gem.}
\begin{remark} \cite[Theorem 3.3]{GZ} have a general abstract gem derived by very different means.
\end{remark}

%\begin{remark} $V$ is always used for the potential in \eqref{3.4} and we used the same letter in \cite{BSZ1} for the Verblunsky map.  We hope the reader will forgive us for the multiple uses of ``$V$'' in this theorem.
%\end{remark}

\begin{proof} By the theory of projective limits (see \cite[Theorem 2.7]{BSZ1}), $I(\mu)=\lim_{L \to \infty} I_L(\mu)$.  Thus by \eqref{3.17}, if $I(\mu)=\infty$, the limit in \eqref{3.28} exists and is $\infty$.

  \corO{Assume now that} $I(\mu) < \infty$. \corO{We would like to use
    Proposition \ref{P3.4},
    but first we need to restrict attention to a compact
  subset of the unit disc. Since $I(\mu)<\infty$,}
  the $d\theta$ weight of $d\mu$ is a.e.\ non--zero, so, by Rakhmanov's Theorem (see \cite[Chapter 9]{OPUC2}), $\alpha_j(\mu) \to 0$ as $j \to \infty$.  Thus $R = \sup_j|\alpha_j(\mu)| < 1$.  Let $\overbar{\bbD}_R = \{z\,|\,|z| \le R\}$.  This is compact so we can apply Proposition \ref{P3.4}, \eqref{3.17} and $I(\nu) \ge 0$ for all $\nu$ to conclude that there is $G_R \ge 0$ and $\Gamma_R$ so that
\begin{equation}\label{3.29}
  G(\alpha_0,\dots,\alpha_d) = G_R(\alpha_0,\dots,\alpha_d)+\Gamma(\alpha_0,\dots,\alpha_{d-1})-\Gamma(\alpha_1,\dots,\alpha_d)
\end{equation}
$G(0,\dots,0) = 0 \Rightarrow G_R(0,\dots,0) = 0$ and by adding a constant to $\Gamma$ we can suppose that $\Gamma(0,\dots,0)=0$.

The sum in \eqref{3.28} is thus
\begin{eqnarray}\label{3.30}
  &&\sum_{j=0}^{N} \corOR{[G_R(\alpha_j,\dots,\alpha_{j+d})-\log(1-|\alpha_j|^2)]}
	\\ &&\quad\quad+ \Gamma(\alpha_0,\dots,\alpha_{d-1})-\Gamma(\alpha_{N-1},\dots,\alpha_{N+d})
\nonumber\end{eqnarray}
Since $\alpha_j \to 0, \Gamma(0,\dots,0)=0$ and $\Gamma$ is continuous, the last term goes to $0$ as $N \to \infty$. Since $G_R \ge 0$, the sum has a limit (which may be $+\infty$.  By \eqref{3.17} and $I_L(\mu) \to I(\mu) < \infty$, we see that the sum is bounded, hence convergent.
\end{proof}

Finally, we turn to the abstract sum rule.  For any
%$\boldsymbol{\alpha}
$\corO{\alpha}\in \bbD^\infty$ define
\begin{equation}\label{3.31}
  %S(\boldsymbol{\alpha})= F_-(\alpha_0,\dots,\alpha_{d-1})+\textrm{the limit in \eqref{3.28}}
  S(\corO{\alpha})= F_-(\alpha_0,\dots,\alpha_{d-1})+\textrm{the limit in \eqref{3.28}}
\end{equation}
$S$ may be infinite if the limit is.

\begin{theorem} [Abstract Sum Rule] \lb{T3.6} Under the hypothesis of Theorem \ref{T3.5}, for any $\mu$ with infinite support
\begin{equation}\label{3.32}
  %H(\eta|\mu) = S(\boldsymbol{\alpha}(\mu)) - S(\boldsymbol{\alpha}(\eta))
  H(\eta|\mu) = S(\corO{\alpha}(\mu)) - S(\corO{\alpha}(\eta))
\end{equation}
\end{theorem}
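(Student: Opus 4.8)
The plan is to upgrade the a priori bound of Theorem \ref{T3.3} to an exact identity in the limit $L\to\infty$ and then to pin down the single remaining additive constant by evaluating at $\mu=\eta$. First I would record the two inputs that require no further work: by \eqref{3.5} the measure-side rate function is $I(\mu)=H(\eta|\mu)$, and by the projective-limit relation \eqref{3.18} it equals the monotone limit $\lim_{L\to\infty}I_L(\mu)$ (possibly $+\infty$); by Theorem \ref{T3.5} the partial sums $\sum_{j=0}^N[G(\alpha_j,\dots,\alpha_{j+d})-\log(1-|\alpha_j|^2)]$ converge, so $S(\alpha(\mu))$ is well defined and is finite exactly when $H(\eta|\mu)$ is. Hence when $H(\eta|\mu)=\infty$ both sides of \eqref{3.32} equal $+\infty$ and there is nothing to prove; the content is the case $I(\mu)<\infty$, where, as in the proof of Theorem \ref{T3.5}, Rakhmanov's theorem gives $\alpha_j(\mu)\to0$ and one may restrict attention to a fixed compact disc $\overbar{\bbD}_R$.

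The heart of the argument is a constant-tracking refinement of the proof of Theorem \ref{T3.3}. In \eqref{3.19}--\eqref{3.22} the density of $\pi_L^*(\wti{\bbP}_N)$ fails to factor across the cut between $\alpha_{L-1}$ and $\alpha_L$ only through the $d$ straddling terms $G(\alpha_j,\dots,\alpha_{j+d})$, $j=L-d,\dots,L-1$, and the endpoint term $F_+(\alpha_{N-d},\dots,\alpha_{N-1})$. Now one keeps the $F_-$ term (rather than discarding it) and computes the integral over $\alpha_L,\dots,\alpha_{N-1}$ by Laplace asymptotics: the tail variables contribute, on the exponential scale, $-N$ times $\inf_{(\alpha_L,\alpha_{L+1},\dots)}\sum_{j\ge L-d}G(\alpha_j,\dots,\alpha_{j+d})$ plus the $F_+$-endpoint cost. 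Feeding in the Nazarov et al.\ decomposition $G=\wti{G}+\Gamma(\,\cdot\,)-\Gamma(\,\cdot\,)$ with $\wti{G}\ge0$ supplied by Proposition \ref{P3.4}, this tail infimum telescopes: the interior $\wti{G}$-terms are driven to $0$, the surviving term $\Gamma(\alpha_{L-d},\dots,\alpha_{L-1})$ cancels exactly against the $-\Gamma(\alpha_{L-d},\dots,\alpha_{L-1})$ produced when $\sum_{j=0}^{L-1-d}G$ is telescoped the same way, and the moving endpoint contributes only a constant. Together with the normalization $\inf_\mu I_L(\mu)=0$ one obtains
\begin{equation*}
I_L(\mu)=F_-(\alpha_0,\dots,\alpha_{d-1})+\sum_{j=0}^{L-1-d}G(\alpha_j,\dots,\alpha_{j+d})-\sum_{j=0}^{L-1}\log(1-|\alpha_j|^2)+\varepsilon_L(\mu)+\kappa,
\end{equation*}
where $\kappa$ is independent of $L$ and $\mu$ and $\varepsilon_L(\mu)\to0$ as $L\to\infty$, since $\alpha_j(\mu)\to0$ and the boundary corrector produced by the telescoping is continuous and vanishes at the origin.

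Letting $L\to\infty$ and passing to the limit in the partial sums via Theorem \ref{T3.5} gives $H(\eta|\mu)=I(\mu)=S(\alpha(\mu))+\kappa$ for every $\mu\in\calM_{+1,\infty}(\partial\bbD)$, an identity in $(-\infty,+\infty]$. Setting $\mu=\eta$ and using $H(\eta|\eta)=0$ forces $\kappa=-S(\alpha(\eta))$, which is finite by Theorem \ref{T3.5}; substituting back yields \eqref{3.32}. The step I expect to be the main obstacle is the exact Laplace analysis of the tail integral over $\alpha_L,\dots,\alpha_{N-1}$: one must interchange the $N\to\infty$ limit with the tail infimum, rule out minimizing configurations that escape to the right, and check that the $F_+$-endpoint really contributes a single $L$- and $\mu$-independent constant. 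This is where the normalization $\wti{G}\ge0$ and the LDP bookkeeping of \cite[Theorems 2.1--2.2 and 2.6--2.7]{BSZ1} carry the weight; the clean algebraic point that makes $\kappa$ independent of $\mu$ is the cancellation of the telescoping $\Gamma$-term across the cut.
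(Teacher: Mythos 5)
Your overall strategy --- compute $I_L(\mu)$ up to a single additive constant plus an error vanishing as $L\to\infty$, then pin the constant by setting $\mu=\eta$ --- matches the heuristic the paper records in the remark after Theorem \ref{T3.6}, but it is not how the theorem is actually proved there, and the step you yourself flag as the ``main obstacle'' is a genuine gap rather than a technicality. The tail contribution you propose to evaluate by ``Laplace asymptotics'' is an integral over $(\alpha_L,\dots,\alpha_{N-1})$ whose dimension grows with $N$, weighted not only by $e^{-N\sum G}$ and the endpoint factor $e^{-NF_+}$ but also by the factors $(1-|\alpha_j|^2)^{N-2-j}$ coming from \eqref{3.20} (which your exponential-scale expression omits). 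Asserting that $-\tfrac1N\log$ of this integral converges to a tail infimum, uniformly enough in $L$ and in the fixed boundary values $\alpha_{L-d},\dots,\alpha_{L-1}$, with limit an $L$- and $\mu$-independent constant $\kappa$ plus $\varepsilon_L(\mu)\to0$, amounts to proving an LDP-type statement for a conditioned process with moving boundary data; it is not supplied by Proposition \ref{P3.4}, by Theorem \ref{T3.3}, or by the cited results of \cite{BSZ1}. In other words, the analytic content of the theorem is exactly the statement you leave unproved, and the telescoping/cancellation of $\Gamma$ across the cut only rearranges the claimed answer --- it does not justify the interchange of the $N\to\infty$ limit with the tail infimum or the uniformity in $L$ and $\mu$.

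The paper avoids this computation altogether. It works with the ratio $\pi_L^*(\wti{\bbP}_N)[W]/\pi_L^*(\wti{\bbP}_N)[W_1]$ of probabilities of two small open sets in $\bbD^L$, so the normalization and the entire tail integral cancel exactly, and it refines the dropping scheme of Theorem \ref{T3.3}: among the straddling $G$-monomials it keeps those involving only $\alpha_L,\alpha_{L+1},\dots$, so that every dropped monomial contains a factor $\alpha_j$ with $L-d\le j\le L-1$. This gives the two-sided bound \eqref{3.34} with error $c_L(W,W_1)$ controlled by $\sum_{j=L-d}^{L-1}|\alpha_j|$, hence \eqref{3.35} after shrinking the sets to two measures; choosing the comparison measure $\nu=\eta$ and invoking Rakhmanov's theorem (so $\alpha_j(\mu),\alpha_j(\eta)\to0$) kills the error as $L\to\infty$ and yields \eqref{3.32} directly as a difference, with no constant ever computed and no asymptotic analysis of a growing-dimensional integral. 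If you wish to salvage your route you must actually prove the conditioned-tail asymptotics; the comparison/ratio device is precisely what lets the paper sidestep that work.
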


\begin{remark} Basically, on the basis of \eqref{3.19}, one expects
  \corO{that}
  the rate function is $S(\corO{\alpha}(\mu))+c$ where $c$ is a constant coming from the $N$th root of the denominator in \eqref{3.19}.  Given that $I(\eta)=0$, the constant has to be $c = -S(\corO{\alpha}(\eta))$.
\end{remark}

\begin{proof} We begin with a formula like \eqref{3.21} but with two changes.  First, rather than look at $\pi^*_L(\corO{\wti{\bbP_N}})[W]$ for a single $W$, we look at a ratio
\begin{equation}\label{3.33}
  \frac{\pi^*_L(\corO{\wti{\bbP_N}})[W]}{\pi^*_L(\corO{\wti{\bbP_N}})[W_1]}
\end{equation}
for two open sets $W, W_1$ in $\bbD^L$ so we needn't concern ourselves with the normalization integral over all of $\bbD^L$ but can focus on small sets where we have control over the $\alpha$'s.

Secondly, we don't drop all of the monomials in those $G$ terms for which $\{j_1,\dots,j_d\}$ intersects both $\{0,\dots,L-1\}$ and $\{L,L+1,\dots\}$.  We keep those monomials which only have $\alpha_L,\alpha_{L+1},\dots$.  Thus the dropped terms all have a factor of some  $\alpha_j$ with $j \in \{L-d,L-d+1,\dots,L-1\}$.  What results is that one obtains (still using $\wti{\bbP}_{N,L}$ for the probability with the, now slightly different, dropped terms):
\begin{equation}\label{3.34}
  e^{-c_L(W,W_1)\corOR{N}} \frac{\wti{\bbP}_{N,L}(W)}{\wti{\bbP}_{N,L}(W_{\corJ{1}})} \le \frac{\pi^*_L (\corJ{\wti{P}_N})[W]}{\pi^*_L(\corJ{\wti{P}_N)}[W_1]} \le e^{c_L(W,W_1)\corOR{N}} \frac{\wti{\bbP}_{N,L}(W)}{\wti{\bbP}_{N,L}(W_{\corJ{1}})}
\end{equation}
where
\begin{equation*}
  c_L(W,W_1) = K\left(\sup_{\corO{\alpha} \in W} \sum_{j=L-d}^{L-1} |\alpha_j|+\sup_{\corO{\alpha} \in W_1} \sum_{j=L-d}^{L-1} |\alpha_j|\right)
\end{equation*}
for some constant $K$ because the dropped terms, \corO{being polynomials
that are not of degree zero in all the $\alpha_j$'s,}
are at least linear in some $\alpha_j$.

Note that because of lower semicontinuity of $I_L$, for any $\mu_0$,
\begin{equation*}
  I_L(\mu_0) = \lim_W \inf_{\mu \in W} I_L(\mu)
\end{equation*}
where $W$ runs over all open neighborhoods of $\mu_0$ ordered by inverse inclusion.  Moreover, because $I_L$ is continuous, one has that
\begin{equation*}
 \inf_{\mu \in W} I_L(\mu) = - \lim_{N \to \infty} \frac{1}{N}
\corOR{\log} \bbP_N(W)
\end{equation*}
Thus taking $N \to \infty$ in \eqref{3.34} and shrinking the open sets to two measures, $\mu$ and $\nu$, we get from \eqref{3.31} that
\begin{flalign}
  &|I_L(\mu)-I_L(\nu)-S_L(\corO{\alpha}(\mu))+S_L(\corO{\alpha}(\nu))| \le &\nonumber \\
  &\null \qquad \qquad \qquad  \qquad \qquad K\left(\sum_{j=L-d}^{L-1} |\alpha_j(\mu)| +\sum_{j=L-d}^{L-1} |\alpha_j(\nu)|\right) & \lb{3.35}
\end{flalign}
where $S_L$ is the sum in \eqref{3.28} when the infinite sum is replaced by the sum to $L-1-d$.

When $H(\eta|\mu) = \infty$, we've already proven \eqref{3.32} so suppose $H(\eta|\mu) < \infty$.  Then \corOR{the density of the absolutely continuous part of $\mu$
with respect to $d\theta$}
% weight for $\mu$
is a.e.\ non--vanishing, so, by Rakhmanov's Theorem, $\alpha_j(\mu) \to 0$.  Take $\nu = \eta$ so also, $\alpha_j(\eta) \to 0$.  Thus the right side of \eqref{3.35} goes to zero and we find that
\begin{equation*}
  H(\eta|\mu)-H(\eta|\eta) = \textrm{ RHS of }\eqref{3.32}
\end{equation*}
proving \eqref{3.32}.
\end{proof}

%%%%%%%%%%%%%%%%%%%%%%%%%%%%%%%%%%%%%%%%%%%%%%%%%%%%%%%%%%%%%%
\section{The (1,0) Case} \lb{s4}
%%%%%%%%%%%%%%%%%%%%%%%%%%%%%%%%%%%%%%%%%%%%%%%%%%%%%%%%%%%%%%

In this section, we'll consider the case of a single singularity of order 1 and recover the sum rule of Simon \eqref{1.6}.  The calculations are so simple, we need not make the simplifying assumption that $\bar{\alpha}_j=\alpha_j$ that we'll make in the later sections.

The normalized empirical measure is
\begin{equation}\label{4.1}
  d\eta(\theta) = (1-\cos \theta) \frac{d\theta}{2\pi}
\end{equation}
so, by \eqref{3.4} and \eqref{3.7}
\begin{align}
  V(\theta) &= 2 \int (1-\cos \psi) \log|e^{i\theta} - e^{i\psi}| \, \frac{d\psi}{2\pi}  \nonumber \\
            &= -\int (e^{i\psi}+e^{-i\psi}) \log|e^{i\theta} - e^{i\psi}| \, \frac{d\psi}{2\pi}  \nonumber \\
            &= \frac{1}{2}(e^{i\theta}+e^{-i\theta}) = \cos(\theta) \lb{4.2}
\end{align}
and
\begin{equation}\label{4.3}
  \tr(V(U)) = \frac{1}{2} \tr(U+\overbar{U})
\end{equation}

In the CMV basis, $U_{jj}=-\alpha_{j-1}\bar{\alpha}_j$ where $\alpha_{-1} \equiv -1$.  Thus, the Verblunsky side of the sum rule is
\begin{equation} \lb{4.4}
-\frac{1}{2}\sum_{j=0}^{\infty}(\alpha_{j-1}\bar{\alpha}_j+\bar{\alpha}_{j-1}\alpha_j)-\sum_{j=0}^{\infty} \log(1-|\alpha_j|^2)+C
\end{equation}
for a suitable constant, $C$.

In \eqref{4.4}, the sum rule involves limits of finite $N$ objects so here and below, sums should involve finite matrices and finite sums.  But, as we explained above we are interested in the limits of such finite sums.  So we'll write sums up to infinity indicating what one will get after taking $N \to \infty$ at the end of the calculation.

Since
\begin{equation*}
  \frac{1}{2}|\alpha_j-\alpha_{j-1}|^2 = \frac{1}{2}|\alpha_j|^2+\frac{1}{2}|\alpha_{j-1}|^2 - \frac{1}{2}\alpha_{j-1}\bar{\alpha}_j -\frac{1}{2}\bar{\alpha}_{j-1}\alpha_j
\end{equation*}
we can rewrite \eqref{4.4} as (changed $C$)
\begin{equation}\label{4.5}
  \frac{1}{2}\sum_{j=-1}^{\infty} |\alpha_{j+1}-\alpha_j|^2+\sum_{j=0}^{\infty}\left[-\log(1-|\alpha_j|^2)-|\alpha_j|^2\right]+C
\end{equation}
That in this form the constant is $C=\frac{1}{2}-\log(2)$ follows from the requirement that this vanish if $\corO{\alpha}= \corO{\alpha}(\eta)$ and the calculations in Section \ref{s3} that \corO{\eqref{3.10}} is $0$.  Thus, we have a LD proof of \eqref{1.6}.

To get the gem \eqref{1.7}, we need the $M=1$ case of

\begin{proposition} \lb{P4.1} For any $\corO{\alpha}$
\begin{equation}\label{4.6}
  \sum_{j=0}^{\infty} \left[-\log(1-|\alpha_j|^2)-\sum_{m=1}^{M}\frac{|\alpha_j|^{2m}}{m}\right] < \infty
\end{equation}
if and only if
\begin{equation}\label{4.7}
  \sum_{j=0}^{\infty} |\alpha_j|^{2M+2} < \infty
\end{equation}
\end{proposition}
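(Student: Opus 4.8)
The plan is to reduce the whole statement to an elementary real-variable comparison. Write $x_j=|\alpha_j|^2\in[0,1)$ (recall $\alpha\in\bbD^\infty$, so $|\alpha_j|<1$) and, for $x\in[0,1)$, set
\[
f_M(x)=-\log(1-x)-\sum_{m=1}^M\frac{x^m}{m}=\sum_{m=M+1}^{\infty}\frac{x^m}{m},
\]
the last equality being the Taylor expansion $-\log(1-x)=\sum_{m\ge 1}x^m/m$, valid for $x\in[0,1)$. Thus every summand of the left-hand sum in \eqref{4.6} equals $f_M(x_j)\ge 0$, so that sum is a series of non-negative terms, exactly as is $\sum_j x_j^{M+1}=\sum_j|\alpha_j|^{2M+2}$.

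First I would record a two-sided bound valid on $[0,\tfrac12]$. Keeping only the leading term gives $f_M(x)\ge \frac{x^{M+1}}{M+1}$, while bounding $\tfrac1m\le\tfrac1{M+1}$ and summing a geometric series gives $f_M(x)\le\frac{1}{M+1}\cdot\frac{x^{M+1}}{1-x}\le\frac{2}{M+1}x^{M+1}$ whenever $x\le\tfrac12$. Hence on $[0,\tfrac12]$,
\[
\frac{1}{M+1}\,x^{M+1}\;\le\;f_M(x)\;\le\;\frac{2}{M+1}\,x^{M+1}.
\]

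Next I would observe that convergence of \emph{either} sum forces $x_j\to 0$: if $\sum_j f_M(x_j)<\infty$ then $f_M(x_j)\to 0$, and since $f_M$ is increasing with $f_M(0)=0$ and $f_M(\delta)>0$ for every $\delta\in(0,1)$, this gives $x_j\to 0$; if $\sum_j x_j^{M+1}<\infty$ then trivially $x_j\to 0$. So in either case there is a $J$ with $x_j\le\tfrac12$ for all $j\ge J$. Applying the displayed comparison to the tail $j\ge J$ shows $\sum_{j\ge J}f_M(x_j)<\infty\iff\sum_{j\ge J}x_j^{M+1}<\infty$, and the finitely many terms with $j<J$ are each finite since $|\alpha_j|<1$. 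Combining these gives the asserted equivalence of \eqref{4.6} and \eqref{4.7}.

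There is no genuine obstacle here; the only point requiring a moment's care is that some $|\alpha_j|$ may a priori be close to $1$, where $f_M$ blows up and no comparison with $x^{M+1}$ can hold. This is precisely why one first uses convergence of one of the series to push the indices into the region $x_j\le\tfrac12$, where $f_M(x)$ and $x^{M+1}$ are comparable up to the constant $M+1$; the remaining finitely many indices contribute only finitely to both sides and so are harmless.
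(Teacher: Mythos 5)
Your proposal is correct and follows essentially the same route as the paper: expand $-\log(1-x)$ in its Taylor series, bound the tail $\sum_{m>M}x^m/m$ below by $x^{M+1}/(M+1)$ and above by $2x^{M+1}/(M+1)$ when $x\le\tfrac12$, and use convergence of one series to force $|\alpha_j|\to 0$ so that the comparison applies to the tail. The only cosmetic difference is that the paper gets \eqref{4.6}$\Rightarrow$\eqref{4.7} directly from the globally valid lower bound, whereas you also invoke $x_j\to 0$ in that direction; this changes nothing of substance.
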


\begin{remark} Since
\begin{equation}\label{4.8}
  -\log(1-|\alpha|^2) = \sum_{m=1}^{\infty} \frac{|\alpha|^{2m}}{m}
\end{equation}
for any $|\alpha|<1$, the summand in \eqref{4.6} is non--negative so the sum either converges or diverges to $+\infty$.
\end{remark}

\begin{proof} By \eqref{4.8}, we have that
\begin{align}
\frac{|\alpha|^{2M+2}}{M+1}  &\le  -\log(1-|\alpha|^2)-\sum_{m=1}^{M}\frac{|\alpha|^{2m}}{m} \lb{4.9} \\
                                                             &\le \frac{|\alpha|^{2M+2}}{M+1}\left(\sum_{j=0}^{\infty}|\alpha|^{2j}\right) \nonumber \\
                                                             &\le 2\frac{|\alpha|^{2M+2}}{M+1} \quad \textrm{if } |\alpha|^2 \le \frac{1}{2} \lb{4.10}
\end{align}

By \eqref{4.9}, we have that \eqref{4.6}$\Rightarrow$\eqref{4.7}.  On the other hand, if \eqref{4.7} holds, then  $|\alpha_j| \to 0$ so, for all large $j$, $|\alpha_j|^2 \le \frac{1}{2}$ so we can apply \eqref{4.10} to the tail of the sum in \eqref{4.6} and conclude that \eqref{4.7}$\Rightarrow$\eqref{4.6}
\end{proof}

We thus have a quick proof of the gem of Simon \cite[Section 2.8]{OPUC1}:

\begin{theorem} \lb{T4.2} \corOR{With $w$ as in \eqref{1.5},}
$\int_0^{2\pi} (1-\cos \theta) \log w(\theta) \tfrac{d\theta}{2\pi} > -\infty$ if and only if
\begin{equation}\label{4.11}
  \sum_{j=0}^{\infty} |\alpha_{j+1}-\alpha_j|^2 + |\alpha_j|^4 < \infty
\end{equation}
\end{theorem}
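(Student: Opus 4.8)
The statement is precisely the gem \eqref{1.7}, and the plan is to read it off from the sum rule \eqref{1.6} (re-derived by large deviation methods in Section~\ref{s4}, cf.\ the Verblunsky side \eqref{4.5}) together with the elementary Proposition~\ref{P4.1}. Throughout, $\mu$ has infinite support --- this is forced by the appearance of the infinite sum in \eqref{4.11} --- so all of Section~\ref{s3} applies.

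First I would invoke \eqref{1.6} as an identity with values in $[-\infty,\infty]$:
\begin{equation*}
  -\int_0^{2\pi}(1-\cos\theta)\log w(\theta)\,\frac{d\theta}{2\pi}
  = -\frac{1}{2} + \frac{1}{2}\sum_{n=-1}^{\infty}|\alpha_{n+1}-\alpha_n|^2
  + \sum_{n=0}^{\infty}\bigl[-\log(1-|\alpha_n|^2)-|\alpha_n|^2\bigr],
\end{equation*}
with $\alpha_{-1}\equiv-1$. The left side is well defined because its positive part is finite: $\log_+w\le w$, $\int_0^{2\pi}w\,\tfrac{d\theta}{2\pi}\le1$ since $\mu$ is a probability measure, and $1-\cos\theta\le2$. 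On the right side, every term of the first sum is $\ge0$, and by \eqref{4.8} so is every term $-\log(1-|\alpha_n|^2)-|\alpha_n|^2$ of the second sum; hence the right side lies in $[-\tfrac12,\infty]$ and is finite if and only if both series converge.

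Next I would dispose of the two series separately. The $n=-1$ term of the first series equals $\tfrac12|\alpha_0+1|^2\le2$, so the first series converges iff $\sum_{n=0}^{\infty}|\alpha_{n+1}-\alpha_n|^2<\infty$. By the $M=1$ case of Proposition~\ref{P4.1}, the second series converges iff $\sum_{n=0}^{\infty}|\alpha_n|^4<\infty$. Combining these equivalences with the displayed identity yields
\begin{equation*}
  \int_0^{2\pi}(1-\cos\theta)\log w(\theta)\,\frac{d\theta}{2\pi}>-\infty
  \iff \sum_{n=0}^{\infty}|\alpha_{n+1}-\alpha_n|^2+|\alpha_n|^4<\infty ,
\end{equation*}
which is \eqref{4.11}.

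Since every ingredient is already in hand, there is no serious obstacle; the only point requiring attention is the legitimacy of using \eqref{1.6} in the extended sense above. This rests on two facts established earlier: that the positive part of the spectral integral is finite (as just noted), and that the large deviation sum rule computes $H(\eta|\mu)$ with $\eta$ as in \eqref{3.9}, which equals $(1-\log2)-\int_0^{2\pi}(1-\cos\theta)\log w\,\tfrac{d\theta}{2\pi}$ when $\eta\ll\mu$ and is $+\infty$ otherwise --- and in the latter case the integral against $\log w$ also equals $-\infty$, since $1-\cos\theta$ vanishes only at $\theta=0$, so $w=0$ on a set of positive Lebesgue measure drives the integral to $-\infty$. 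Thus the equivalence is unconditional for $\mu$ of infinite support, completing the proof.
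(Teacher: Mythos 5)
Your proposal is correct and follows essentially the same route as the paper: read the gem off the LD-derived sum rule \eqref{1.6} (equivalently \eqref{4.5}/\eqref{3.10}), note that both series on the Verblunsky side are termwise nonnegative so finiteness forces each to converge, and use the $M=1$ case of Proposition \ref{P4.1} to convert the logarithmic series into $\alpha\in\ell^4$. Your extra care about the extended-value sense of \eqref{1.6} and the case $\eta\not\ll\mu$ is a sound (if implicit in the paper) bookkeeping point, not a different method.
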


%%%%%%%%%%%%%%%%%%%%%%%%%%%%%%%%%%%%%%%%%%%%%%%%%%%%%%%%%%%%%%
\section{The (1,1) Case}  \lb{s5}
%%%%%%%%%%%%%%%%%%%%%%%%%%%%%%%%%%%%%%%%%%%%%%%%%%%%%%%%%%%%%%

In terms of \eqref{1.8}, this section will consider gems where the measure side is
\begin{equation}\label{5.1}
  \int (1+\cos\theta)(1-\cos\theta) \log(w(\theta)) \, \frac{d\theta}{2\pi}
\end{equation}
To figure out the normalization, we note that
\begin{align}
  (1+\cos\theta)(1-\cos\theta) &= 1-\cos^2\theta = \sin^2\theta \nonumber \\
                               &= -\frac{1}{4}(e^{i\theta}-e^{-i\theta})^2 = \frac{1}{2}(1-\cos 2\theta) \lb{5.2}
\end{align}
One can also figure this out by noting that the extreme sides of \eqref{5.2} are degree 2 Laurent polynomials in $e^{i\theta}$ vanishing at $\theta=0,\pi$ to second order with maximum $1$ on $\partial\bbD$. For later use, we note that the same argument shows that for $k=1,2,\dots$
\begin{equation}\label{5.3}
  \prod_{j=0}^{k-1} \left[1-\cos\left(\theta-\frac{2\pi j}{k}\right)\right] = K_k [1-\cos(k\theta)]
\end{equation}
for a constant $K_k$.

Since $\int \cos(k\theta) d\theta = 0$, we see that the normalized $d\eta$ is
\begin{equation} \lb{5.4}
d\eta_k(\theta) = (1-\cos k\theta) \frac{d\theta}{2\pi}
\end{equation}
so by \eqref{3.4} and \eqref{3.7}, we have that
\begin{equation}\label{5.5}
  V_k(\theta) = \frac{1}{k} \cos(k\theta)
\end{equation}
We discussed $k=1$ in Section \ref{s4}, we'll discuss $k=2$ in this section (thereby recovering, using large deviations, a special case of a result of Simon--Zlat\v{o}s \cite{SZ}) and general $k$ in Section \ref{s6.5}.  Thus in this section, we'll prove
\begin{theorem} \lb{T5.1}  Let $\corO{\alpha}$ be real.  Then
\begin{flalign*}
    \int (1-\cos^2\theta) \log(w(\theta)) \, \frac{d\theta}{2\pi} > -\infty \iff &&
\end{flalign*}
 \begin{equation} \lb{5.5a}
   \null \qquad \qquad \qquad \sum_{n=0}^{\infty} |\alpha_{n+2}-\alpha_n|^2 + |\alpha_n|^4 < \infty
\end{equation}
\end{theorem}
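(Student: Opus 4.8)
The plan is to run the Section~\ref{s3} machinery for the measure $\eta_2$ of \eqref{5.4} and the potential $V_2(\theta)=\tfrac12\cos2\theta$ of \eqref{5.5}, for which $\tr(V_2(U))=\tfrac14\tr(U^2+U^{-2})$. The key device is to pass to the Hermitian matrix $A=\tfrac12(U+U^*)=\re U$: since $(U+U^*)^2=U^2+U^{-2}+2I$, one has $\tr(V_2(U))=\tr(A^2)-\tfrac N2$, and for an $N\times N$ matrix $\tr(A^2)=\sum_{j,k}|A_{jk}|^2$ is a sum of \emph{non-negative} terms. I would realize $U$ as the CMV matrix $\calC=\calL\calM$ of \eqref{3.15} (so by (A2) the matrix $A$ is real, symmetric and five-diagonal) and compute its entries. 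With the convention $\alpha_{-1}=-1$ one finds $A_{jj}=-\alpha_{j-1}\alpha_j$, $A_{j,j+1}=\tfrac12\rho_j(\alpha_{j+1}-\alpha_{j-1})$, and, because the second super- and subdiagonals of $\calC$ are nonzero on complementary index parities, the pleasantly clean formula $A_{j,j+2}=\tfrac12\rho_j\rho_{j+1}$ (no $\alpha$'s).

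Plugging in gives
\begin{equation*}
\tr(A^2)=\sum_j \alpha_{j-1}^2\alpha_j^2+\tfrac12\sum_j\rho_j^2(\alpha_{j+1}-\alpha_{j-1})^2+\tfrac12\sum_j(1-\alpha_j^2)(1-\alpha_{j+1}^2).
\end{equation*}
Expanding $(1-\alpha_j^2)(1-\alpha_{j+1}^2)=1-\alpha_j^2-\alpha_{j+1}^2+\alpha_j^2\alpha_{j+1}^2$, the term $\tfrac12\sum_j 1$ is $\tfrac N2$ up to a uniformly bounded boundary contribution, and so cancels the $-\tfrac N2$. Adding the $-\sum_j\log(1-\alpha_j^2)$ term and collecting, the Verblunsky side of the gem (Theorem~\ref{T3.5}), i.e.\ the limit in \eqref{3.28}, equals, up to a constant and uniformly bounded boundary terms (these are the $F_\pm$ of Theorem~\ref{T3.2}),
\begin{equation*}
\sum_j \alpha_{j-1}^2\alpha_j^2+\tfrac12\sum_j\alpha_j^2\alpha_{j+1}^2+\tfrac12\sum_j\rho_j^2(\alpha_{j+1}-\alpha_{j-1})^2+\sum_j\bigl[-\log(1-\alpha_j^2)-\alpha_j^2\bigr],
\end{equation*}
in which every one of the four sums has a non-negative summand. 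Hence, by Theorem~\ref{T3.5}, $H(\eta_2|\mu)<\infty$ iff this quantity is finite, i.e.\ iff all four sums converge. By Proposition~\ref{P4.1} with $M=1$ the last sum converges iff $\sum_j\alpha_j^4<\infty$; if it does then $\alpha_j\to0$, so $\rho_j^2\to1$, and then the first two sums are controlled by $\sum_j\alpha_j^4$ (via $\alpha_{j-1}^2\alpha_j^2\le\tfrac12(\alpha_{j-1}^4+\alpha_j^4)$) while the third is comparable to $\sum_n(\alpha_{n+2}-\alpha_n)^2$ (using $\tfrac12\le\rho_j^2\le1$ for large $j$ and that the finitely many small-$j$ terms, including the $\alpha_{-1}=-1$ one, are bounded). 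Conversely, \eqref{5.5a} makes all four sums converge at once. Therefore finiteness of the Verblunsky side is equivalent to \eqref{5.5a}.

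It remains to match the spectral side. As in the discussion around \eqref{3.5} and the constant-term computation in Section~\ref{s3}, $H(\eta_2|\mu)=\int(1-\cos2\theta)\log(1-\cos2\theta)\,\tfrac{d\theta}{2\pi}-\int(1-\cos2\theta)\log w(\theta)\,\tfrac{d\theta}{2\pi}$ when $\mu$ is $d\theta$-a.c.\ and is $+\infty$ otherwise; the first integral is a finite constant (evaluated as in Proposition~\ref{P3.1}). So $H(\eta_2|\mu)<\infty$ iff $\int(1-\cos2\theta)\log w\,\tfrac{d\theta}{2\pi}>-\infty$, and since $1-\cos2\theta=2(1-\cos^2\theta)$ this is exactly $\int(1-\cos^2\theta)\log w\,\tfrac{d\theta}{2\pi}>-\infty$. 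Combining with the previous paragraph gives the theorem. (A measure $\mu$ of finite support has $w=0$ a.e.\ and only finitely many Verblunsky coefficients, so both sides of the asserted equivalence fail and there is nothing to prove; otherwise Theorems~\ref{T3.2}--\ref{T3.5} apply since $\eta_2$ has full support and a.e.\ positive density.)

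I expect the main obstacle to be the CMV bookkeeping of the first step: getting the off-diagonal entries of $\calC$ right, verifying that the $-\tfrac N2$ precisely absorbs the divergent $\tfrac12\sum 1$, and confirming that everything reorganizes into four manifestly non-negative sums. Once that structural fact is established, the rest is routine—Theorem~\ref{T3.5}, Proposition~\ref{P4.1}, and elementary $\ell^p$ comparisons—and, as a sanity check, the ``harmonic'' part $\tfrac12\sum_n(\alpha_{n+2}-\alpha_n)^2$ is the expected analogue of the $\tfrac12\sum_n(\alpha_{n+1}-\alpha_n)^2$ of the $(1,0)$ sum rule.
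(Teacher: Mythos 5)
Your proposal is correct, and the key step is genuinely different from the paper's. The paper computes $\tr(U^2)$ directly from the CMV entries, arriving at the decomposition $\calI_2+\calI_4+\calL_6$ of \eqref{5.9}--\eqref{5.11}; since $\calI_4$ there is not manifestly nonnegative, the harder direction (finiteness of the integral $\Rightarrow$ \eqref{5.5a}) requires identifying $\calI_2$ with $\tfrac12\sum(\alpha_{j+1}-\alpha_{j-1})^2$ via the $\calP_\ell$ bookkeeping of Proposition \ref{P5.2} and then splitting $\calI_4=\calI_{41}+\calI_{42}$ and using the parallelogram identity $(x+y)^2+(x-y)^2=2(x^2+y^2)$ to extract positivity term by term. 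Your device $\tr(V_2(U))=\tr\bigl((\re U)^2\bigr)-\tfrac N2$ sidesteps all of this: since $\tr(A^2)=\sum_{j,k}|A_{jk}|^2$ and $A=\re\,\calC$ is real five-diagonal (your entry formulas $A_{jj}=-\alpha_{j-1}\alpha_j$, $A_{j,j+1}=\tfrac12\rho_j(\alpha_{j+1}-\alpha_{j-1})$, $A_{j,j+2}=\tfrac12\rho_j\rho_{j+1}$ check out against \cite[(4.2.14)]{OPUC1}, including the parity bookkeeping on the second diagonals), the Verblunsky side becomes a sum of four manifestly nonnegative pieces at once, and both directions reduce to Proposition \ref{P4.1} with $M=1$ (the paper instead needs $M=2$, using $\ell^6\supset\ell^4$) plus trivial $\ell^p$ comparisons; expanding your expression reproduces the paper's quartic terms exactly, so the two sum rules agree. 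The use of a different but equivalent $(G,F_\pm)$ decomposition is licensed by the nonuniqueness noted after Theorem \ref{T3.2}, so invoking Theorem \ref{T3.5} as you do is legitimate. One small imprecision: the dichotomy ``$H(\eta_2|\mu)$ is given by the stated formula when $\mu$ is $d\theta$-a.c.\ and is $+\infty$ otherwise'' is not right as written --- the correct condition is $\eta_2\ll\mu$, i.e.\ $w>0$ a.e.\ (a singular part of $\mu$ is harmless); with that correction the equivalence $H(\eta_2|\mu)<\infty\iff\int(1-\cos 2\theta)\log w\,\tfrac{d\theta}{2\pi}>-\infty$ you actually use is exactly the paper's \eqref{3.5} discussion, and the rest of your argument stands.
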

\corO{Note that}
$\corO{\tr(V(U))} = \tfrac{1}{4}\tr(U^{2}+U^{-2}) = \tfrac{1}{2} \tr(U^2)$ \corO{if} $\corO{\alpha}$ is real.  For such $\corO{\alpha}$, the CMV matrix has the form for $j \ge 1$ (see \cite[eqn(4.2.14)]{OPUC1}):
\begin{multline}\label{5.6}
   U_{j,j}=- \alpha_j \alpha_{j-1} \qquad U_{2j-1,2j} = -\rho_{2j-1}\alpha_{2j-2} \qquad U_{2j,2j+1} = \corOR{\rho_{2j}} \alpha_{2j+1}\\
   U_{2j,2j-1} = \rho_{2j-1}\alpha_{2j} \qquad U_{2j+1,2j} = -\rho_{2j}\alpha_{2j-1} \qquad \qquad \null
\end{multline}
There are also matrix elements that are two off--diagonal, but if $U_{j,j\pm 2} \ne 0$, then $U_{j\pm 2, j} = 0$, so these terms don't contribute to $\tr(U^2)$ (this is also clear from the $\calL\calM$ factorization and from the GGT representation).  Thus
\begin{align}
  \tr(U^2) &= \textrm{bdy} + \sum_{j=1}^{\infty} [U_{j,j}^2+U_{j,j+1}U_{j+1,j}+U_{j,j-1}U_{j-1,j}] \nonumber \\
           &= \textrm{bdy}+\sum_{j=1}^{\infty} (\alpha_j^2\alpha_{j-1}^2 - 2\rho_j^2\alpha_{j-1}\alpha_{j+1})
					\label{5.8cor}
\end{align}
where $\textrm{bdy}$ \corJ{is short for boundary and} refers to some finite number \corJ{of} terms involving small indices (and, later, when it appears with a finite sum, involving finitely many terms involving large indices with the number of terms bounded as the upper index of the sum changes).

Therefore, using $\rho_j^2 = 1-\alpha_j^2$
\corOR{and $G(\alpha_j,\alpha_{j+1},\alpha_{j+2})=\alpha_{j+1}^2\alpha_j^2-2
\rho_{j+1}^2\alpha_j \alpha_{j+2}$,}
 we see \corOR{after some algebraic manipulations}
that the Verblunsky side of the gem, \corOR{see \eqref{3.28},} is
\begin{equation}\label{5.8}
  \textrm{bdy}+\calI_2+\calI_4+\calL_6
\end{equation}
\begin{align}
  \calI_2 &= \sum_{j=1}^{\infty} (\alpha_j^2 - \alpha_{j-1}\alpha_{j+1}) \lb{5.9}  \\
  \calI_4 &= \sum_{j=1}^{\infty} \left(\tfrac{1}{2} \alpha_j^2\alpha_{j-1}^2 + \alpha_{j-1}\alpha_j^2\alpha_{j+1}+\tfrac{1}{2}\alpha_j^4\right) \lb{5.10} \\
  \calL_6 &= \sum_{j=0}^{\infty} -\log(1-\alpha_j^2) - \alpha_j^2 - \tfrac{1}{2} \alpha_j^4 \lb{5.11}
\end{align}

We claim that up to boundary terms
\begin{equation}\label{5.12}
  \calI_2=\tfrac{1}{2} \sum_{j=1}^{\infty} (\alpha_{j+1}-\alpha_{j-1})^2
\end{equation}
Accepting this for a moment, we can show that the conditions of Simon and Lukic (which agree in this case)
\begin{flalign}\label{5.13}
  \textrm{(S1)} &&\mathclap{\alpha_{j+1}-\alpha_{j-1} \in \ell^2}&&
\end{flalign}
\begin{flalign}\label{5.14}
  \textrm{(S2)} &&\mathclap{\alpha  \in \ell^4}&&
\end{flalign}
imply the measure condition, that is (given the gem) that $\textrm{(S1-2)} \Rightarrow \calI_2 < \infty, \calI_4 < \infty, \calL_6 < \infty$ .  For clearly, \corJ{by $\textrm{(S1)}$}, \eqref{5.12}$< \infty$ and, by H\"{o}lder's inequality, $\calI_4$ is bounded by $C\norm{\alpha}_4^4$.  Since $\norm{\alpha}_6 \le \norm{\alpha}_4$ (on account of $|\alpha|\le 1$), $\calL_6$ is finite by Proposition \ref{P4.1} with $M=2$.

To see \eqref{5.12}, define for $\ell=0,1,2,\dots$
\begin{equation}\label{5.15}
  \calP_\ell = \sum_{j=0}^{\infty} \alpha_j \alpha_{j+\ell}
\end{equation}

\begin{proposition} \lb{P5.2} Let $\calT_1$ and $\calT_2$ be two functions on sequences of real $\alpha$ which are boundary terms plus a sum of the form \eqref{3.25A} where $G$ is a quadratic function of its variables (i.e.\ \corJ{a} second degree homogeneous polynomial).  Suppose for some $L$, $\calT_r$ has no terms of the form $\alpha_j \alpha_{j+\ell}$ with $\ell > L$.  Then up to boundary terms, each $\calT_r$ is a linear combination of $\{\calP_\ell\}_{\ell=0}^L$ and $\calT_1 = \calT_2$ up to boundary terms if they are the same linear combinations.
\end{proposition}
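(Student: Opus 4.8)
The plan is to reduce everything to one elementary observation: a homogeneous quadratic $G$ in variables $y_0,\dots,y_k$, written $G(y_0,\dots,y_k)=\sum_{0\le p\le q\le k}c_{pq}\,y_py_q$, produces upon summation the finite combination $\sum_j G(\alpha_j,\dots,\alpha_{j+k})=\sum_{0\le p\le q\le k}c_{pq}\sum_j \alpha_{j+p}\alpha_{j+q}$, and each inner sum coincides with one of the $\calP_\ell$'s up to boundary terms. So first I would fix $p\le q$ and substitute $i=j+p$, obtaining $\sum_j\alpha_{j+p}\alpha_{j+q}=\sum_i\alpha_i\alpha_{i+(q-p)}$; since the shift $p$ is bounded (by $k$, and in fact it is enough that it be bounded), the two sums differ in at most finitely many terms, with that number unaffected as the truncation cutoff grows. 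By the conventions for $\textrm{bdy}$ introduced in Section~\ref{s5}, this difference is a boundary term, so $\sum_j\alpha_{j+p}\alpha_{j+q}=\calP_{q-p}$ up to boundary terms.

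Next I would collect terms: $\calT_r=\textrm{bdy}+\sum_{\ell=0}^{k}d_\ell^{(r)}\calP_\ell$ with $d_\ell^{(r)}=\sum_{q-p=\ell}c_{pq}^{(r)}$. The hypothesis that $\calT_r$ contains no monomial $\alpha_j\alpha_{j+\ell}$ with $\ell>L$ forces $d_\ell^{(r)}=0$ for $\ell>L$, so the sum effectively runs over $0\le\ell\le L$; this is the first assertion. For the second, if $\calT_1$ and $\calT_2$ give rise to the same coefficient vector $(d_0,\dots,d_L)$, then $\calT_1-\calT_2$ is a difference of boundary terms and hence itself a boundary term, which is exactly the claim.

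I do not expect a genuine obstacle here; the one point requiring care is the bookkeeping of boundary terms. These identities are to be read in the truncated, finite-$N$ sense of Section~\ref{s3}: each $\calP_\ell$ and each $\calT_r$ is literally a finite sum plus $O(1)$ end corrections, and the limit $N\to\infty$ is taken only afterwards, so the reindexings above are legitimate precisely because every shift amount is bounded. It is also worth flagging that the degree-two hypothesis enters only to guarantee that each monomial is a product of exactly two of the $\alpha$'s at a fixed separation, which is what makes the bilinear quantities $\calP_\ell$ the natural invariants; for a higher-degree $G$ one would instead track the analogous multilinear sums.
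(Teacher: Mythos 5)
Your argument is correct and is exactly the bookkeeping the paper has in mind: the authors give no proof at all, remarking only that "this result is obvious," and your reindexing of each $\sum_j\alpha_{j+p}\alpha_{j+q}$ to $\calP_{q-p}$ up to bounded end corrections, followed by collecting coefficients, is precisely the intended (easy) half of the statement. Your closing caveats --- that everything is read at finite truncation with the $\textrm{bdy}$ convention of Sections \ref{s3} and \ref{s5}, and that only the direction "same combination $\Rightarrow$ equal up to boundary terms" is claimed --- match the paper's own remarks, which reserve the converse (linear independence of the $\calP_\ell$ modulo boundary terms) as the harder, unneeded half.
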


\begin{remarks} 1. This result is obvious.  More subtle is the fact that ``if'' in the last sentence can be replaced by ``if and only if'' but we won't need that harder half of this.

2.  Again, what is being stated involves limits of finite sums.  The equalities only hold up to finite boundary terms.  There are also boundary terms at the upper limit but those go to zero by Rakhmanov's Theorem.  One infinite sum converges if and only if the other one does.
\end{remarks}

\begin{corollary} \lb{C5.3} $\calI_2$ of \eqref{5.9} is given by \eqref{5.12} up to constants.
\end{corollary}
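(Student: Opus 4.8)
The plan is to apply Proposition \ref{P5.2} directly. Both $\calI_2$ from \eqref{5.9} and the sum appearing in \eqref{5.12} are, up to boundary terms, of the form \eqref{3.25A} with $G$ a homogeneous quadratic in its variables, and in each of them the only products $\alpha_j\alpha_{j+\ell}$ that occur have $\ell\le 2$. Hence, by Proposition \ref{P5.2}, each of the two expressions equals (up to boundary terms) a linear combination of $\calP_0,\calP_1,\calP_2$, and it suffices to check that the two linear combinations coincide.

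First I would rewrite $\calI_2$: up to boundary terms, $\sum_{j\ge 1}\alpha_j^2=\calP_0$, and reindexing $j\mapsto j-1$ gives $\sum_{j\ge 1}\alpha_{j-1}\alpha_{j+1}=\sum_{j\ge 0}\alpha_j\alpha_{j+2}=\calP_2$, so that $\calI_2=\calP_0-\calP_2$ up to boundary terms. Next I would expand the square in \eqref{5.12}, using $(\alpha_{j+1}-\alpha_{j-1})^2=\alpha_{j+1}^2+\alpha_{j-1}^2-2\alpha_{j-1}\alpha_{j+1}$; the three pieces sum (again up to boundary terms, by shifts of the summation index) to $\calP_0$, $\calP_0$ and $\calP_2$ respectively, so $\tfrac12\sum_{j\ge 1}(\alpha_{j+1}-\alpha_{j-1})^2=\tfrac12(\calP_0+\calP_0-2\calP_2)=\calP_0-\calP_2$ up to boundary terms as well. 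Comparing the two expressions yields the corollary.

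There is essentially no obstacle here; the only point requiring care is the bookkeeping of the finitely many boundary terms at each end of the sums — which is exactly the content of the second remark following Proposition \ref{P5.2} (the upper-end terms tending to $0$ by Rakhmanov's Theorem) — together with the understanding that all of these are limits of finite sums rather than a priori absolutely convergent series, so that the reindexings above are legitimate at the finite-$N$ level before passing to the limit.
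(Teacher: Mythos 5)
Your proposal is correct and is essentially the paper's own argument: both reduce $\calI_2$ and the expanded square in \eqref{5.12} to the combination $\calP_0-\calP_2$ (up to boundary terms) and invoke Proposition \ref{P5.2} to conclude. Nothing further is needed.
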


\begin{proof} The RHS of \eqref{5.9} is, up to boundary terms $\calP_0-\calP_2$.  Expanding the square, the RHS of \eqref{5.12} is $\tfrac{1}{2}(\calP_0-2\calP_2+\calP_0) = \calP_0-\calP_2$.
\end{proof}

\begin{proof} [Proof of Theorem 5.1] We've already proven that $\textrm{(S1-2)}$ imply that the integral in \eqref{5.5a} is finite.  So we need to go in the opposite direction.  Therefore, we suppose the integral is finite.

By the abstract gems discussed in Section \ref{s3}, we know that $\calI_2+\calI_4+\calL_6$ is finite (in that the cutoff sums are uniformly bounded) with $\calI_2$ given by \eqref{5.12}.  In this form $\calI_2$ is positive and so is $\calL_6$ as noted in the remark after Proposition \ref{P4.1}.  So we look at $\calI_4$ which we write up to boundary terms as
\begin{align}
  \calI_4    &= \calI_{41}+\calI_{42} \lb{5.16} \\
  \calI_{41} &= \sum_{j=1}^{\infty} \tfrac{1}{4}(\alpha_j\alpha_{j-1}+\alpha_j\alpha_{j+1})^2 \lb{5.17} \\
  \calI_{42} &= \sum_{j=1}^{\infty} \tfrac{1}{2}(\alpha_j^4+\alpha_{j-1}\alpha_j^2\alpha_{j+1}) \lb{5.18}
\end{align}

By H\"{o}lder's inequality,  $\sum_{j=1}^{J} |\alpha_{j-1}\alpha_j^2\alpha_{j+1}| \le \sum_{j=0}^{J+1} |\alpha_j|^4$, so up to boundary terms, $\calI_{42}$ is positive and thus $\calI_2+\calI_{41}+\calL_6$ is finite.  Since each term is positive, they are all finite, i.e.\ $\calI_2 < \infty$ and $\calI_{41} < \infty$.  $\calI_2 < \infty$ is $\textrm{(S1)}$.

$\calI_2 < \infty$ and $|\alpha_j| < 1 \Rightarrow \sum_{j=1}^{\infty} \alpha_j^2(\alpha_{j+1} - \alpha_{j-1})^2 < \infty$.  $\calI_{41}<\infty$ means $\sum_{j=1}^{\infty} \alpha_j^2(\alpha_{j+1} + \alpha_{j-1})^2 < \infty$.  Since $(x+y)^2+(x-y)^2=2(x^2+y^2)$, we conclude that
\begin{equation}\label{5.19}
  \sum_{j=1}^{\infty} \alpha_j^2(\alpha_{j+1}^2+\alpha_{j-1}^2) < \infty
\end{equation}

Since $|\alpha_{j-1}\alpha_j^2\alpha_{j+1}|  \le \tfrac{1}{2} (\alpha_{j-1}^2\alpha_j^2+\alpha_j^2\alpha_{j+1}^2)$, we see that $\sum_{j=1}^{\infty}|\alpha_{j-1}\alpha_j^2\alpha_{j+1}| < \infty$.  All the other terms in $\calI_2+\calI_4+\corJ{\calL}_6$ are positive, so all are finite.  In particular, $\tfrac{1}{2}\sum_{j=1}^{\infty} \alpha_j^4 < \infty$ which is $\textrm{(S2)}$.
\end{proof}

In particular, we see that the proof from Lukic conditions to convergence of the integral is much easier than the converse.

%%%%%%%%%%%%%%%%%%%%%%%%%%%%%%%%%%%%%%%%%%%%%%%%%%%%%%%%%%%%%%
\section{The (2,0) Case} \lb{s6}
%%%%%%%%%%%%%%%%%%%%%%%%%%%%%%%%%%%%%%%%%%%%%%%%%%%%%%%%%%%%%%

Our goal in this section is to prove:

\begin{theorem} \lb{T6.1} \corOR{Let $\mu$ be as in \eqref{1.5} and assume that
its Verblunsky sequence $\alpha$ is real.}  Then
\begin{equation} \lb{6.1}
\int (1-\cos \theta)^2 \log(w(\theta)) \frac{d\theta}{2\pi} > -\infty
\end{equation}
if and only if
\begin{flalign}\label{6.2}
  \textrm{(S1)} &&\mathclap{(S-1)^2\alpha \in \ell^2}&&
\end{flalign}
and
\begin{flalign}\label{6.3}
  \textrm{(S2)} &&\mathclap{\alpha  \in \ell^6}&&
\end{flalign}
\end{theorem}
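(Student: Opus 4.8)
The plan is to follow the template of Sections \ref{s4} and \ref{s5}: compute the normalized equilibrium measure $\eta$ and the potential $V$ with $\eta$ as in \eqref{3.4}, express $\tr(V(U))$ through the CMV parametrization, apply the abstract gem Theorem \ref{T3.5} to reduce \eqref{6.1} to the statement that the Verblunsky side $\sum_j[G(\alpha_j,\dots,\alpha_{j+d})-\log(1-\alpha_j^2)]$ is finite, and then reorganize that sum, up to boundary terms and an additive constant, into a sum of manifestly non-negative pieces that separately encode $\textrm{(S1)}$ and $\textrm{(S2)}$.

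For the computations: $(1-\cos\theta)^2=\tfrac32-(e^{i\theta}+e^{-i\theta})+\tfrac14(e^{2i\theta}+e^{-2i\theta})$ has total mass $\tfrac32$, so $d\eta(\theta)=\tfrac23(1-\cos\theta)^2\,\tfrac{d\theta}{2\pi}$, and by \eqref{3.4} and Proposition \ref{P3.1}, $V(\theta)=\tfrac43\cos\theta-\tfrac16\cos2\theta$, hence $\tr(V(U))=\tfrac43\tr(U)-\tfrac16\tr(U^2)$ for real $\alpha$. Using $U_{jj}=-\alpha_{j-1}\alpha_j$, the formula \eqref{5.8cor} for $\tr(U^2)$, $\rho_j^2=1-\alpha_j^2$, and $-\log(1-\alpha_j^2)=\alpha_j^2+\tfrac12\alpha_j^4+[-\log(1-\alpha_j^2)-\alpha_j^2-\tfrac12\alpha_j^4]$, I expect the Verblunsky side to equal, up to boundary terms and a constant,
\begin{equation*}
  \tfrac16\,\|(S-1)^2\alpha\|_2^2+\mathcal{Q}_4+\calL_6 ,
\end{equation*}
where $\calL_6=\sum_j[-\log(1-\alpha_j^2)-\alpha_j^2-\tfrac12\alpha_j^4]\ge0$ and $\mathcal{Q}_4=\tfrac12\sum_j\alpha_j^4-\tfrac16\sum_j\alpha_j^2\alpha_{j-1}^2-\tfrac13\sum_j\alpha_j^2\alpha_{j-1}\alpha_{j+1}$. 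Identifying the quadratic part with $\tfrac16\|(S-1)^2\alpha\|_2^2$ is bookkeeping in the spirit of Proposition \ref{P5.2} (both quadratics have the same coefficients $1,-\tfrac43,\tfrac13$ of $\calP_0,\calP_1,\calP_2$). The key algebraic step is to rewrite $\mathcal{Q}_4$, using $\alpha_{j-1}\alpha_{j+1}=\tfrac12(\alpha_{j-1}^2+\alpha_{j+1}^2)-\tfrac12(\alpha_{j+1}-\alpha_{j-1})^2$ and $\sum_j(\alpha_j^4-\alpha_j^2\alpha_{j-1}^2)=\tfrac12\sum_j(\alpha_j^2-\alpha_{j-1}^2)^2$ (both up to boundary terms), as
\begin{equation*}
  \mathcal{Q}_4=\tfrac14\sum_j(\alpha_j^2-\alpha_{j-1}^2)^2+\tfrac16\sum_j\alpha_j^2(\alpha_{j+1}-\alpha_{j-1})^2 ,
\end{equation*}
again a sum of non-negative terms.

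Granted this, ``$\Rightarrow$'' is short: if \eqref{6.1} holds then $H(\eta|\mu)<\infty$, so by Theorem \ref{T3.5} the Verblunsky side converges; Rakhmanov's Theorem gives $\alpha_j\to0$, the boundary terms are bounded, and since $\tfrac16\|(S-1)^2\alpha\|_2^2$, $\mathcal{Q}_4$ and $\calL_6$ are all non-negative, each is finite --- the first being $\textrm{(S1)}$ and the last giving $\textrm{(S2)}$ via Proposition \ref{P4.1} with $M=2$. For ``$\Leftarrow$'', assume $\textrm{(S1)}$ and $\textrm{(S2)}$: then $\|(S-1)^2\alpha\|_2<\infty$ and $\calL_6<\infty$ (Proposition \ref{P4.1} again). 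For $\mathcal{Q}_4$, Theorem \ref{T2.5} gives $(S-1)\alpha\in\ell^3$ (from $(S-1)^2\alpha\in\ell^2$ and $\alpha\in\ell^6$), hence $(S^2-1)\alpha=(S-1)^2\alpha+2(S-1)\alpha\in\ell^3$; then H\"{o}lder ($\ell^3$ against $\ell^{3/2}$) bounds $\sum_j\alpha_j^2(\alpha_{j+1}-\alpha_{j-1})^2\le\|\alpha\|_6^2\,\|(S^2-1)\alpha\|_3^2<\infty$, while the Leibniz rule \eqref{2.14} writes $(S-1)(\alpha^2)_j=(\alpha_{j+1}-\alpha_j)(\alpha_j+\alpha_{j+1})$ and the same type of H\"{o}lder estimate gives $\sum_j(\alpha_j^2-\alpha_{j-1}^2)^2=\|(S-1)(\alpha^2)\|_2^2\le C\,\|(S-1)\alpha\|_3^2\,\|\alpha\|_6^2<\infty$. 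Since $\alpha\in\ell^6$ forces $\alpha_j\to0$, the boundary terms tend to $0$, the Verblunsky side is finite, and Theorem \ref{T3.5} yields $H(\eta|\mu)<\infty$, i.e.\ \eqref{6.1}. (Throughout one restricts to $\mu$ of infinite support, as in Theorem \ref{T3.5}, and uses that $\int(1-\cos\theta)^2\log((1-\cos\theta)^2)\,\tfrac{d\theta}{2\pi}$ is finite, so that $H(\eta|\mu)<\infty$ is equivalent to \eqref{6.1}.)

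The main obstacle is the rewriting of $\mathcal{Q}_4$ into non-negative pieces \emph{and} the realization that in the ``$\Leftarrow$'' direction those pieces are controlled by $\textrm{(S1)}$ and $\textrm{(S2)}$ alone. This is exactly where $(2,0)$ differs from $(1,1)$: here $\alpha$ need not lie in $\ell^4$ (cf.\ the example $\alpha_n=(n+1)^{-1/5}$ after Theorem \ref{T2.6}), so the crude bound of quartic terms by $\|\alpha\|_4^4$ used in Section \ref{s5} is unavailable, and one must interpolate through the discrete Gagliardo--Nirenberg inequality of Theorem \ref{T2.5} and H\"{o}lder. The remaining care is routine: tracking the finitely many boundary terms at both ends of the finite-$N$ sums (controlled at the top by $\alpha_j\to0$, as in Sections \ref{s4}--\ref{s5}); the exact value of the additive constant is not needed for the gem, only for the corresponding sum rule.
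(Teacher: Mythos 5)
Your proposal is correct, and it lives inside the same framework as the paper's proof: the same normalized $\eta$, the same potential $V(\theta)=\tfrac43\cos\theta-\tfrac16\cos 2\theta$, the same three pieces of the Verblunsky side (your $\mathcal{Q}_4$ is exactly $\calI_4$ of \eqref{6.11}), the same identification $\calI_2=\tfrac16\norm{(S-1)^2\alpha}_2^2$ via the $\calP_\ell$ bookkeeping of Proposition \ref{P5.2}, and the same analytic inputs (Proposition \ref{P4.1} with $M=2$ and the discrete Gagliardo--Nirenberg inequality of Theorem \ref{T2.5}). Where you genuinely differ is the treatment of the quartic term. You rewrite it, up to boundary terms, as the manifestly non-negative sum $\tfrac14\sum_j(\alpha_j^2-\alpha_{j-1}^2)^2+\tfrac16\sum_j\alpha_j^2(\alpha_{j+1}-\alpha_{j-1})^2$ (the identity checks out: expanding and shifting indices reproduces the coefficients $\tfrac12,-\tfrac16,-\tfrac13$), and this single decomposition does double duty --- positivity gives the direction \eqref{6.1}$\Rightarrow$(S1),(S2), while H\"older, using $(S-1)\alpha\in\ell^3$ from Theorem \ref{T2.5} and hence $(S^2-1)\alpha=(S-1)^2\alpha+2(S-1)\alpha\in\ell^3$, gives the converse. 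The paper instead establishes $\calI_4\ge 0$ directly from H\"older ($\tfrac16+\tfrac13=\tfrac12$) for the forward direction, and for the converse uses the different split $\calI_4=\calI_{41}+\calI_{42}$ of \eqref{6.16}--\eqref{6.19} together with the substitution $\eta_n=\alpha_{n+1}-\alpha_n$ in \eqref{6.20}--\eqref{6.21}, reducing $\calI_{41}$ to $(\ell^6)^2(\ell^3)^2$ and $(\ell^6)^3\ell^2$ sums. The two routes have the same cost and rest on the same inequalities; yours is slightly more economical in that one algebraic identity serves both directions and the positivity of each summand is visible by inspection, whereas the paper's substitution device is closer in spirit to the manipulations it later needs for the $(2,1)$ case in Section \ref{s7}. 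Your handling of the peripheral issues (boundary terms controlled by $\alpha_j\to 0$ via Rakhmanov in one direction and via $\alpha\in\ell^6$ in the other, the finite constant $\int(1-\cos\theta)^2\log\bigl((1-\cos\theta)^2\bigr)\tfrac{d\theta}{2\pi}$ relating $H(\eta|\mu)<\infty$ to \eqref{6.1}, and the restriction to measures of infinite support) matches the paper's.
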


\begin{remarks} 1. In this case the Lukic and Simon conditions agree.

2.  This result, indeed without the reality restriction is in Simon--Zlat\v{o}s \cite{SZ}.  The main difference in our approach is the method of deriving the sum rules.  Once one has the sum rules the arguments are related but we feel our presentation is more transparent.
\end{remarks}

To begin we need to normalize $\eta$, i.e. determine $Z$ so that $Z^{-1}\int (1-\cos \theta)^2 w(\theta) \frac{d\theta}{2\pi} = 1$.  We'll use
\begin{equation}\label{6.4}
2(1-\cos \theta)=-(e^{i\theta/2}-e^{-i\theta/2})^2
\end{equation}
as one can see by expanding the square or by using $1-\cos \theta = 2 \sin^2(\theta/2)$.  Thus
\begin{align}
   4(1-\cos \theta)^2 &=(e^{i\theta/2}-e^{-i\theta/2})^4 \nonumber \\
                    &=2\cos(2\theta) - 8 \cos(\theta) + 6 \lb{6.5}
\end{align}
Since $\int \cos(k\theta)\tfrac{d\theta}{2\pi} = \delta_{k0}$, we see that
\begin{equation}\label{6.6}
  d\eta(\theta) =\left[1-\frac{4}{3} \cos(\theta) + \frac{1}{3}\cos(2\theta)\right]\,d\theta
\end{equation}
so \corO{by} \eqref{3.4} and \eqref{3.7}
\begin{equation}\label{6.7}
  V(\theta) = -\frac{1}{6} \cos(2\theta) + \frac{4}{3} \cos(\theta)
\end{equation}
and thus when $\corO{\alpha}$ is real
\begin{equation}\label{6.8}
  Q(U) \equiv \tr(V(U)) = -\frac{1}{6} \tr(U^2) + \frac{4}{3} \tr(U)
\end{equation}

We computed $\tr(U)$ in \eqref{4.4} and $\tr(U^2)$ in \corOR{\eqref{5.8cor}}.  Thus the Verblunsky side of the sum rule, \corOR{see \eqref{3.28}}, is
\begin{equation}\label{6.9}
  \textrm{bdy}+\calI_2+\calI_4+\calL_6
\end{equation}
\begin{align}
  \calI_2 &= \sum_{n=1}^{\infty} \left[\alpha_n^2 - \tfrac{4}{3}\alpha_{n+1}\alpha_{n} + \tfrac{1}{3} \alpha_n \alpha_{n+2}\right] \lb{6.10}  \\
  \calI_4 &= \sum_{n=1}^{\infty} \left[\tfrac{1}{2} \alpha_n^4  -\tfrac{1}{6}\alpha_{n-1}^2\alpha_n^2-\tfrac{1}{3}\alpha_{n-1}\alpha_n^2\alpha_{n+1}\right] \lb{6.11} \\
  \calL_6 &= \sum_{n=0}^{\infty} -\log(1-\alpha_n^2) - \alpha_n^2 - \tfrac{1}{2} \alpha_n^4 \lb{6.12}
\end{align}

In terms of the quantities $\calP_\ell$ of \eqref{5.15}
\begin{equation}\label{6.13}
  \calI_2 = \calP_0 - \tfrac{4}{3} \calP_1 +\tfrac{1}{3}\calP_2
\end{equation}
up to boundary terms.  On the other hand, expanding the square, we see that up to boundary terms
\begin{equation}\label{6.14}
  \sum_n (\alpha_{n-1}-2\alpha_{n}+\alpha_{n+1})^2 = 6\calP_0-8\calP_1+2\calP_2
\end{equation}
since $1+4+1=6$ and $4+4=8$.  Thus we see that up to boundary terms
\begin{equation}\label{6.15}
  \calI_2 = \tfrac{1}{6}\sum_n (\alpha_{n-1}-2\alpha_{n}+\alpha_{n+1})^2 = \tfrac{1}{6}\norm{(S-1)^{\corO{2}}\alpha}_2^2
\end{equation}

\begin{proof} [Proof of half of Theorem \ref{T6.1} that \eqref{6.1} $\Rightarrow$ (\corJ{S1}),(S2)]  Since $\tfrac{1}{6}+\tfrac{1}{3} = \tfrac{1}{2}$, up to boundary terms, $\calI_4 \ge 0$ by H\"{o}lder's inequality.  By the abstract sum rule, $\eqref{6.1} \Rightarrow \calI_2+\calI_4+\calI_6$ is finite.  Since each of these terms is positive (by \eqref{6.15} and Proposition \ref{P4.1}), each is individually finite.  $\calI_2 < \infty \Rightarrow$ (S1) and, by Proposition \ref{P4.1}, \corOR{$\calL_6 < \infty \Rightarrow$} (S2). \end{proof}

\begin{proof} [\corOR{Proof of Other Half of Theorem \ref{T6.1} that (S1),(S2)$\Rightarrow $ \eqref{6.1}}]
%For the other direction,
%(S1),(S2) clearly imply that $\calI_2$ and
%$\corO{\calL}_6$ are finite
\corOR{Clearly (S1)$\Rightarrow\calI_2<\infty$ and (S2)
$\corOR{\Rightarrow\calL_6 < \infty}$ by Proposition \ref{P4.1},}
 so we need only control $\calI_4$.  H\"{o}lder
lets one control $\sum \kappa^{(1)}_n  \kappa^{(2)}_n \kappa^{(3)}_n
\kappa^{(4)}_n$ if $\corO{\norm{\kappa^{(j)}}_{p_j}} < \infty$ and $\tfrac{1}{p_1} + \tfrac{1}{p_2} +\tfrac{1}{p_3}+ \tfrac{1}{p_4} \ge 1$.  Since $\tfrac{4}{6} <1$, we can't just look at products of four $\alpha$'s.  However since $\tfrac{1}{6}+\tfrac{1}{6}+\tfrac{1}{6}+\tfrac{1}{2}=1$, we can control products of three $\alpha$'s and one $(S-1)^2\alpha$.  By the Gagliardo-Nirenberg inequality, \eqref{2.11}, (S1)+(S2)$\Rightarrow(S-1)\alpha \in \ell^3$.  Since $\tfrac{1}{6}+\tfrac{1}{6}+\tfrac{1}{3}+\tfrac{1}{3}=1$, a product of two $\alpha$'s and two $(S-1)\alpha$ is also summable.  So the goal is to write $\calI_4$ as sums of these two terms. We write
\begin{align}
  \calI_4 &= \textrm{bdy}+\calI_{41}+\calI_{42} \lb{6.16}  \\
  12\calI_{41} &= 4\sum_{n=1}^{\infty} \alpha_n^2(\alpha_n^2-\alpha_{n+1}\alpha_{n-1}) \lb{6.17} \\
  12\corO{\calI}_{42} &= \sum_{n=0}^{\infty} (\alpha_n^2-\alpha_{n-1}^2)^2 \lb{6.18} \\
             &= \sum_{n=0}^{\infty} (\alpha_n-\alpha_{n-1})^2(\alpha_n+\alpha_{n-1})^2 \lb{6.19}
\end{align}

The $\calI_{42}$ term is a sum of products of two $(S-1)\alpha$ terms and two $\alpha$ terms so by the above, it is a convergent sum by (S1),(S2).  Let $\eta_n=\alpha_{n+1}-\alpha_n$ so
\begin{equation}\label{6.20}
  \alpha_{n+1}\alpha_{n-1} = (\alpha_n+\eta_n)(\alpha_n-\eta_{n-1})
\end{equation}
and thus
\begin{equation}\label{6.21}
  \alpha_n^2(\alpha_n^2-\alpha_{n+1}\alpha_{n-1})=\alpha_n^2\eta_n\eta_{n-1} - \alpha_n^3(\eta_n-\eta_{n-1})
\end{equation}

   We've already seen that the sum in $\calI_{42}$ is absolutely convergent.  By \eqref{6.21}, $\calI_{41}$ is a sum of $(\ell^6)^2(\ell^3)^2$ and $(\ell^6)^3\ell^2$ terms and so a convergent sum.  Thus $\calI_4 < \infty$.
\end{proof}

%%%%%%%%%%%%%%%%%%%%%%%%%%%%%%%%%%%%%%%%%%%%%%%%%%%%%%%%%%%%%%
\section{The $k$th Roots of Unity Case} \lb{s6.5}
%%%%%%%%%%%%%%%%%%%%%%%%%%%%%%%%%%%%%%%%%%%%%%%%%%%%%%%%%%%%%%

Fix $k \in \{1,2,3,\dots\}$.  In this section, we'll consider the conditions
\begin{equation}\label{6.5.1}
  \int (1-\cos k\theta) \log(w(\theta)) \, \frac{d\theta}{2\pi} > -\infty
\end{equation}
\begin{flalign}\label{6.5.2}
  \textrm{(S1)} &&\mathclap{(S^k-1)\alpha \in \ell^2}&&
\end{flalign}
\begin{flalign}\label{6.5.3}
  \textrm{(S2)} &&\mathclap{\alpha  \in \ell^4}&&
\end{flalign}

By \eqref{5.3}, this is the same as taking $\theta_j=\tfrac{2(j-1)\pi}{k}, \, j=1,\dots,k$ so $\{e^{i\theta_j}\}_{j=1}^k$ are the $k$th roots of unity.  Of course, if $\omega=e^{i\theta_2}$ is a primitive $k$th root of unity, then $S^k-1=\prod_{j=1}^{k}(S-\omega^j)$, so \eqref{6.5.2}/\eqref{6.5.3} are precisely the Simon (=Lukic) conditions for this case.  In this section, we'll prove

\begin{theorem} \lb{T6.5.1} Suppose $\corO{\alpha}$ obeys (S2).  Then
\begin{equation*}
  \eqref{6.5.1} \iff \eqref{6.5.2}
\end{equation*}
In particular, (S1-2)$\Rightarrow$\eqref{6.5.1}.
\end{theorem}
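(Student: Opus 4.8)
The plan is to follow the template of Sections \ref{s4}--\ref{s6}: build the matrix model, extract the Verblunsky side of the gem of Theorem \ref{T3.5}, split it into a quadratic part, a higher--degree polynomial part, and the nonnegative $\log$ part, and then see which of these are automatically finite under (S2).

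First I would collect the inputs. By \eqref{5.4}--\eqref{5.5} the normalized empirical measure is $d\eta_k(\theta) = (1-\cos k\theta)\tfrac{d\theta}{2\pi}$ and $V_k(\theta) = \tfrac1k\cos(k\theta)$, so for real $\alpha$ the reality of the CMV matrix gives $\tr(V_k(U)) = \tfrac1k\tr(U^k)$. Via the $\calL\calM$ (or GGT) factorization, Theorem \ref{T3.2} writes $\tr(U^k) = \textrm{bdy} + \sum_j G_0(\alpha_j,\dots,\alpha_{j+k})$ with $G_0(0,\dots,0)=0$; by the homogeneity and even-power-of-$\rho$ statements in Theorem \ref{T3.2}, together with $\rho_j^2 = 1-\alpha_j^2$, $G_0$ contains only monomials of even degree $\ge 2$ in $\alpha$. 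Hence the Verblunsky side \eqref{3.28} of the gem equals
\begin{equation*}
  \textrm{bdy} + \calI_2 + \calR + \calL_6 ,
\end{equation*}
where $\calI_2$ is the quadratic part of $\tfrac1k\sum_j G_0$ (together with the $\sum_j \alpha_j^2$ split off from the $\log$), $\calR$ collects the monomials of $\tfrac1k G_0$ of degree $\ge 4$, and $\calL_6 = \sum_j[-\log(1-\alpha_j^2)-\alpha_j^2-\tfrac12\alpha_j^4]\ge 0$.

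The heart of the proof is the identity, up to boundary terms,
\begin{equation*}
  \calI_2 = \calP_0 - \calP_k = \tfrac12\norm{(S^k-1)\alpha}_2^2 ,
\end{equation*}
with $\calP_\ell$ as in \eqref{5.15}. I would prove this by a band/path count: in the GGT picture the only entries bearing an $\alpha$ are quadratic in $\alpha$ (the on- and above-diagonal entries, each $\propto \bar\alpha\,\alpha$ times a product of $\rho$'s), while the lone subdiagonal entry $\rho_j$ carries none, so a closed path of length $k$ contributing a quadratic monomial uses exactly one $\alpha$-bearing entry and $k-1$ subdiagonal steps; closing the path forces the two $\alpha$-indices to differ by exactly $k$, and there are exactly $k$ such paths per index pair, giving coefficient $-k$ for $\calP_k$ in $\tr(U^k)$, hence $-1$ after the factor $\tfrac1k$, while the $+\calP_0$ is the $-\alpha_j^2$ moved out of $\calL_6$. (One may instead note via Proposition \ref{P5.2} that $\calI_2$ is a priori a combination $\sum_{\ell\le 2k} c_\ell\calP_\ell$, kill $\ell\notin\{0,k\}$ by the same band argument, and check $c_0=1$, $c_k=-1$; the cases $k=1,2$ are the already-computed \eqref{4.5} and \eqref{5.12}.) I expect this path-counting---in particular tracking the multiplicity $k$ and the $\alpha_{-1}\equiv-1$ boundary bookkeeping---to be the only real obstacle; the rest is soft.

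Finally I would close the loop under (S2), i.e.\ $\alpha\in\ell^4$. Then $\alpha\in\ell^6$, so $\calL_6<\infty$ by Proposition \ref{P4.1} with $M=2$, and $\calR$ converges absolutely: every monomial of degree $m\ge4$ is bounded by $\tfrac1m\sum|\alpha_j|^m\le\tfrac1m\sum|\alpha_j|^4$ since $|\alpha_j|\le1$, and there are boundedly many of them, so $|\calR|\le C\norm{\alpha}_4^4$; the boundary terms are finite, and those at the far end vanish as $N\to\infty$ by Rakhmanov's Theorem, as in Sections \ref{s5}--\ref{s6}. Therefore the Verblunsky side is finite if and only if $\norm{(S^k-1)\alpha}_2<\infty$, i.e.\ if and only if (S1). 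By Theorem \ref{T3.5} this holds if and only if $H(\eta_k|\mu)<\infty$, and since $H(\eta_k|\mu) = \int(1-\cos k\theta)\log(1-\cos k\theta)\tfrac{d\theta}{2\pi} - \int(1-\cos k\theta)\log w(\theta)\tfrac{d\theta}{2\pi}$ with the first term a finite constant, $H(\eta_k|\mu)<\infty$ is exactly \eqref{6.5.1}. This gives \eqref{6.5.1}$\iff$\eqref{6.5.2} under (S2), and the displayed ``in particular'' is the forward implication (S1)+(S2)$\Rightarrow$\eqref{6.5.1}.
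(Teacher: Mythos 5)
Your proposal is correct in substance and follows the same architecture as the paper: identify the quadratic (in $\alpha$) part of the Verblunsky side as $\tfrac12\norm{(S^k-1)\alpha}_2^2$ up to boundary terms, observe that everything else (the degree $\ge 4$ polynomial terms and the $\log$ remainder) is automatically finite when $\alpha\in\ell^4$, and then read off the equivalence from the abstract gem of Theorem \ref{T3.5}. The one genuine difference is where you prove the key combinatorial fact, Proposition \ref{P6.5.2} (the quadratic part of $\tr(U^k)$ is $-k\sum_n\alpha_n\bar\alpha_{n+k}$ up to boundary): the paper does this in the CMV representation, via the string/turn-around argument from the end of the proof of Theorem \ref{T3.2} ($k-1$ increasing and $k-1$ decreasing $\rho$'s with $-\alpha$ and $\bar\alpha$ at the two turn-arounds, each monomial appearing in $k$ diagonal entries), while you count closed paths in the GGT matrix \eqref{9.7}, where the single subdiagonal $\rho$ makes the count particularly clean: exactly one $\alpha$-bearing entry jumping up $k-1$ and $k-1$ unit down-steps, forcing the index gap $k$ and the multiplicity $k$. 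This is precisely the alternative the paper flags as ``an interesting exercise'' in the remark after Proposition \ref{P6.5.2}, and it is a perfectly valid (arguably more transparent) route to \eqref{6.5.4}. Two minor caveats: first, you assume $\alpha$ real (both in writing $\tr(V_k(U))=\tfrac1k\tr(U^k)$ and in using $\calP_\ell$ as in \eqref{5.15}), whereas the theorem, as the paper's remark stresses, needs no reality assumption; the fix is routine --- use $\tr(V_k(U))=\tfrac1{2k}[\tr(U^k)+\tr(\overbar{U}^k)]$ and the Hermitian form of $\calP_\ell$ as in \eqref{6.6.9}, which turns your count into \eqref{6.5.5} exactly as in Proposition \ref{P6.5.3}. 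Second, a bookkeeping slip: having borrowed both $\alpha_j^2$ and $\tfrac12\alpha_j^4$ from $-\log(1-\alpha_j^2)$ to form your $\calL_6$, the term $\tfrac12\sum_j\alpha_j^4$ must be carried along in the remainder $\calR$ (or, as the paper does, borrow only $\alpha_j^2$ and invoke Proposition \ref{P4.1} with $M=1$); under (S2) this is harmless either way.
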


\begin{remarks} 1. $\corO{\alpha}$ need not be assumed real.

2. This is a special case of a result of Golinskii--Zlato\v{s} \cite{GZ}.
\end{remarks}

The key input to proving this will be

\begin{proposition} \lb{P6.5.2}  If $\tr(U^k)$ is written in terms of $\alpha$'s only, the term quadratic in $\alpha$ is
\begin{equation}\label{6.5.4}
  \corO{ \textrm{\rm bdy}}-k\sum_{n=0}^{\infty} \alpha_n \bar{\alpha}_{n+k}
\end{equation}
\end{proposition}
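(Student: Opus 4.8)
The plan is to compute the coefficient of the quadratic-in-$\alpha$ part of $\tr(U^k)$ directly from the $\calL\calM$-factorization of the CMV matrix, exactly as in the proof of Theorem \ref{T3.2}, but keeping track only of which monomials in the expansion \eqref{3.16B} are of degree exactly $2$ in the $\alpha,\bar\alpha$ variables (equivalently, degree $2k-2$ in the $\rho$'s). Recall that each $\Theta_j$ contributes either an off-diagonal $\rho_j$ or a diagonal entry $\bar\alpha_j$ or $-\alpha_j$, and that, as established in the proof of Theorem \ref{T3.2}, a nonzero term in $(U^k)_{jj}$ corresponds to a closed walk of $2k$ unit steps on the index line $\{0,1,\dots,N-1\}$ starting and ending at $j$, where a ``turn'' (change of direction, including staying put) always costs a factor of $\alpha$ or $\bar\alpha$ and a step that continues in the same direction costs a $\rho$. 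A term is quadratic in the $\alpha$'s exactly when the walk has precisely two turns. Since the walk is closed, it must go out some distance and come back, so a two-turn closed walk of total length $2k$ must go $k$ steps in one direction and $k$ steps back, i.e.\ it reaches exactly distance $k$ from $j$.

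First I would set $V(z)=z^k$ (the $z^{-k}$ piece is handled by $U\mapsto U^*$, which conjugates $\alpha_j\mapsto\bar\alpha_j$ and just produces the complex conjugate term $-k\sum\bar\alpha_n\alpha_{n+k}$, consistent with $\tr(U^k)$ having the stated form after symmetrization; but here we only need $\tr(U^k)$ itself). Then I would isolate, inside $(U^k)_{jj}=(\calL\calM\calL\calM\cdots)_{jj}$, the contributions of the two extreme walks from $j$: the one going $j\to j+1\to\cdots\to j+k\to\cdots\to j$ and the one going $j\to j-1\to\cdots\to j-k\to\cdots\to j$. For each, the turn at the ``far'' endpoint is a single diagonal entry of some $\Theta$, and the turn at $j$ itself is another diagonal entry; all $2k-2$ intermediate steps are $\rho$'s, which I substitute as $\rho_\ell=1+O(\alpha^2)$, so at quadratic order each $\rho$ is simply $1$. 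Reading the $\calL\calM$ pattern (which alternates $\calL$'s acting on even-based $2\times2$ blocks and $\calM$'s on odd-based ones), the diagonal turn at the top is $-\alpha_{j+k}$ (or $\bar\alpha_{j+k}$, depending on parity of the block) and the turn at the bottom is $\bar\alpha_j$ (or $-\alpha_j$); one checks that the two sign/conjugation choices combine so that the product is $-\alpha_j\bar\alpha_{j+k}$ in the outgoing-right walk and $-\bar\alpha_j\alpha_{j+k}$ in the outgoing-left walk. Summing $(U^k)_{jj}$ over $j=0,\dots,N-1$, the left-going walk from $j$ is the same monomial as the right-going walk from $j-k$, so after relabeling the two families merge and the quadratic part of $\tr(U^k)$ becomes $\mathrm{bdy}-k\sum_{n}\alpha_n\bar\alpha_{n+k}$, where the factor $k$ arises because (as in Remark 2 after Theorem \ref{T3.2}) each such monomial also appears from the $G$'s at $k$ consecutive base points — more precisely, the walk of length $2k$ reaching distance $k$ can be ``based'' at any of the $k$ interior indices, each giving the same monomial $\alpha_n\bar\alpha_{n+k}$ up to the relabeling, so it is counted $k$ times. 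I would verify the multiplicity and the sign on the smallest nontrivial case $k=1$ against \eqref{4.4} (where indeed $U_{jj}=-\alpha_{j-1}\bar\alpha_j$, giving $-\sum\alpha_{j-1}\bar\alpha_j$, matching \eqref{6.5.4} with $k=1$ after reindexing), and on $k=2$ against \eqref{5.8cor}, where the quadratic term is $-2\sum\rho_j^2\alpha_{j-1}\alpha_{j+1}\big|_{\text{quad}}=-2\sum\alpha_{j-1}\alpha_{j+1}$, again matching with $k=2$.

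The main obstacle I expect is bookkeeping the signs and the conjugation pattern coming from the parity-dependent structure of $\calL$ versus $\calM$ (the $2\times2$ block $\Theta(\alpha)$ has $\bar\alpha$ in the $(1,1)$ slot and $-\alpha$ in the $(2,2)$ slot), together with correctly identifying the multiplicity $k$ rather than, say, $1$ or $2k$. A clean way to sidestep much of the parity casework is to observe that, modulo cubic-and-higher terms, $\calL\equiv\calL_0\oplus(\text{shifted }\calL_0)$ with $\rho$'s set to $1$, so $U=\calL\calM$ reduces at linear-in-$\alpha$ order to the free shift $S$ plus a rank-controlled correction whose $(j,j\pm k)$ structure can be read off, and then $\tr(U^k)$ at quadratic order is $\tr$ of $(S+\text{correction})^k$ expanded to second order; the cross term $\tr(S^{k-1}\cdot\text{correction}\cdot\ )$-type contributions are what produce $\sum\alpha_n\bar\alpha_{n+k}$ with an honest combinatorial factor $k$ from the $k$ cyclic positions of the two correction insertions at distance summing appropriately. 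Either route works; I would present the walk-counting version since it parallels the already-written proof of Theorem \ref{T3.2} most closely, and relegate the sign verification to the checks at $k=1,2$.
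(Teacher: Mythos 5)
Your overall strategy---isolating the degree-two part of the expansion \eqref{3.16B} via the walk picture from the proof of Theorem \ref{T3.2}, and then counting how many diagonal entries see the same monomial---is exactly the paper's strategy, but your execution has gaps at the two points that carry the content. First, your walks are one step too long: $(U^k)_{jj}$ is a product of exactly $2k$ factors, and a term quadratic in $\alpha,\bar\alpha$ uses two ``stays'' and only $2k-2$ unit moves, so the walk visits $k$ consecutive sites (it reaches distance $k-1$ from the bottom of its range); your path $j\to j+1\to\cdots\to j+k\to\cdots\to j$ has $2k$ moves and cannot occur at quadratic order. The index gap $k$ in $\alpha_n\bar\alpha_{n+k}$ comes instead from the subscript convention in \eqref{3.14}: the stay at the bottom site $m$ of the range is the $(2,2)$ entry of $\Theta_{m-1}$, hence $-\alpha_{m-1}$, while the stay at the top site $p=m+k-1$ is the $(1,1)$ entry of $\Theta_{p}$, hence $\bar\alpha_{p}$. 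This also shows that your conjugation claim for the ``outgoing-left'' walk is wrong: every two-stay closed walk, whatever its base point and shape, produces the same monomial $-\alpha_{m-1}\bar\alpha_{m+k-1}\prod_{j=m}^{m+k-2}\rho_j^2$, with the conjugate always on the larger index; there is no family contributing $-\bar\alpha_n\alpha_{n+k}$. Since the proposition is stated for complex $\alpha$ (the remark after Theorem \ref{T6.5.1} stresses reality is not assumed), your accounting would yield a mixture of $\alpha_n\bar\alpha_{n+k}$ and $\bar\alpha_n\alpha_{n+k}$ rather than \eqref{6.5.4}; the checks at $k=1,2$ cannot detect this because \eqref{4.4} is already symmetrized and \eqref{5.8cor} assumes real $\alpha$.

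Second, the factor $k$ is asserted rather than derived. Your enumeration produces only the two extreme walks per range (base at the bottom going right, base at the top going left), which by itself gives multiplicity $2$; the remaining $k-2$ copies come from bases strictly inside the range, whose walks have three monotone runs (out to the parity-forced extreme, stay, across to the other extreme, stay, back)---a shape excluded by your opening claim that a two-turn closed walk must ``go out and come back.'' What needs to be checked, and is the heart of the paper's one-line count, is that for each block of $k$ consecutive sites and each of the $k$ possible base points there is exactly one admissible two-stay walk (through a given factor $\calL$ or $\calM$ at a given site the only options are to stay or to move in one parity-determined direction, so the walk is determined by where it stays), all giving the same monomial; that is what yields ``$j=n+1,\dots,n+k$, so $k$ in all.'' Your fallback route is also unsound as written: at $\alpha\equiv 0$ the CMV matrix $\calL\calM$ is not the shift $S$ in the standard basis but a zig-zag operator, so the expansion of $\tr\bigl((S+\mathrm{correction})^k\bigr)$ does not apply directly. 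None of this is fatal to the approach, but as it stands neither the universal form of the quadratic monomial nor the multiplicity $k$---the two substantive claims of Proposition \ref{P6.5.2}---is established.
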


\begin{remark} This proof will rely on the CMV representation of unitaries.  It is an interesting exercise to give a different proof using the GGT representation and ideas of Section \ref{s7}.
\end{remark}

\begin{proof} By \eqref{3.16B}, $\tr(U^k)$ is a homogeneous polynomial of degree $2k$ in $\alpha,\bar{\alpha}$ and $\rho$.  To be left with quadratic terms after using $\rho^2 = 1 - \bar{\alpha}\alpha$, we need products with $2k-2 \, \rho$'s and two of $\alpha$ and/or $\bar{\alpha}$.

As the end of the proof of theorem \ref{T3.2} explains, one gets strings of increasing or decreasing $\rho$'s and $\alpha$ or $\bar{\alpha}$ at turn around points.  The $2k-2 \, \rho$'s must occur in a string of $k-1$ increasing and a second string of $k-1$ decreasing $\rho$'s.  The form, \eqref{3.14}, of $\Theta$ shows we get $-\alpha$ at the bottom turn around and $\bar{\alpha}$ at the top turn around, so the only quadratic terms are $(-\alpha_n)\prod_{j=1}^{k-1}\rho_{n+j}(\bar{\alpha}_{n+k})$.

Each diagonal matrix element $(\calC^k)_{jj}$ has such a term for $j=n+1,n+2,\dots,n+k$, so $k$ in all which yields \eqref{6.5.4}.
\end{proof}

\begin{proposition} \lb{P6.5.3} The quadratic term in the sum rule, \eqref{3.24}, for \eqref{6.5.1} with $|\alpha_n|^2$ ``borrowed'' from $-\log(1-|\alpha_n|^2)$ is (up to a boundary term)
\begin{equation} \lb{6.5.5}
\tfrac{1}{2}\sum_{n=0}^{\infty}|\alpha_n-\alpha_{n+k}|^2
\end{equation}
\end{proposition}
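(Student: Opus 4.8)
The plan is to read the quadratic part of the Verblunsky side of the sum rule straight off Proposition \ref{P6.5.2} together with the explicit potential \eqref{5.5}, and then complete a square. First I would record that since $d\eta_k$ is given by \eqref{5.4}, \eqref{5.5} yields $V_k(\theta)=\tfrac1k\cos(k\theta)=\tfrac{1}{2k}(e^{ik\theta}+e^{-ik\theta})$, so that $\tr(V(U))=\tfrac{1}{2k}\bigl(\tr(U^k)+\tr(U^{-k})\bigr)$. By Proposition \ref{P6.5.2}, the quadratic-in-$\alpha$ part of $\tr(U^k)$ (after using $\rho_j^2=1-|\alpha_j|^2$) is $\textrm{bdy}-k\sum_n\alpha_n\bar\alpha_{n+k}$; and since $\tr(U^{-k})=\overline{\tr(U^k)}$ for unitary $U$ (equivalently, by interchanging $\calL$ and $\calM$ and conjugating the $\alpha$'s, as in the proof of Theorem \ref{T3.2}), the quadratic part of $\tr(U^{-k})$ is $\textrm{bdy}-k\sum_n\bar\alpha_n\alpha_{n+k}$. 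Because the $F_\pm$ in \eqref{3.13} are boundary terms, it follows that the quadratic part of $\sum_j G(\alpha_j,\dots,\alpha_{j+d})$ is
\[
 \textrm{bdy}-\tfrac12\sum_{n=0}^\infty\bigl(\alpha_n\bar\alpha_{n+k}+\bar\alpha_n\alpha_{n+k}\bigr).
\]

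Next I would bring in the $-\log(1-|\alpha_n|^2)$ term in \eqref{3.24}/\eqref{3.28}. Its expansion $-\log(1-|\alpha_n|^2)=|\alpha_n|^2+\tfrac12|\alpha_n|^4+\cdots$ contributes $\sum_n|\alpha_n|^2$ at quadratic order, and this is precisely the $|\alpha_n|^2$ that the statement ``borrows''. Hence the quadratic part of the full Verblunsky side is $\sum_n|\alpha_n|^2-\tfrac12\sum_n(\alpha_n\bar\alpha_{n+k}+\bar\alpha_n\alpha_{n+k})$. Reindexing half of $\sum_{n\ge0}|\alpha_n|^2$ by $n\mapsto n+k$ changes the sum only by the $k$ terms with indices $0,\dots,k-1$, a boundary term, so up to boundary terms $\sum_n|\alpha_n|^2=\tfrac12\sum_n(|\alpha_n|^2+|\alpha_{n+k}|^2)$, and therefore
\[
 \tfrac12\sum_{n=0}^\infty\bigl(|\alpha_n|^2+|\alpha_{n+k}|^2-\alpha_n\bar\alpha_{n+k}-\bar\alpha_n\alpha_{n+k}\bigr)=\tfrac12\sum_{n=0}^\infty|\alpha_n-\alpha_{n+k}|^2,
\]
which is \eqref{6.5.5}.

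The one place where care is needed — and the step I expect to be the main (if mild) obstacle — is the bookkeeping of what ``up to a boundary term'' covers: one must check that passing from $\tr(V(U_N))$ to the $G$-sum via \eqref{3.13}, taking $N\to\infty$, and reindexing $\sum_n\alpha_n\bar\alpha_{n+k}$ each cost only finitely many terms with small indices together with a bounded number of terms with large indices, the latter tending to $0$ by Rakhmanov's theorem, exactly as in Proposition \ref{P5.2} and the discussion around \eqref{3.28}. Since we only extract the homogeneous degree-$2$ part and Proposition \ref{P6.5.2} has already done the $\rho$-elimination, no further estimates are involved.
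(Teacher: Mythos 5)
Your proposal is correct and follows essentially the same route as the paper: compute $\tr(V(U))=\tfrac{1}{2k}[\tr(U^k)+\tr(\bar U^k)]$ from \eqref{5.5}, note that the factor $k$ in \eqref{6.5.4} cancels the $1/k$, add the borrowed $|\alpha_n|^2$ from $-\log(1-|\alpha_n|^2)$, and complete the square; your reindexing and conjugation steps just spell out what the paper's terser argument leaves implicit.
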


\begin{proof} The normalized $\eta$ is $(1-\cos k\theta)\tfrac{d\theta}{2\pi}$, so by \eqref{5.5}, the potential is $\tfrac{1}{k}\cos k\theta$ and $Q$ is
\begin{equation}\label{6.5.6}
  \frac{1}{2k}[\tr(U^k)+\tr(\overbar{U}^k)]
\end{equation}
Thus, since $k$ in \eqref{6.5.4} cancels the $k^{-1}$ in \eqref{6.5.6}, the quadratic term including the borrowed $|\alpha_n|^2$ is
\begin{equation}\label{6.5.7}
  \textrm{bdy}+\tfrac{1}{2}\sum_{n=0}^{\infty} \left[|\alpha_n|^2+|\alpha_{n+k}|^2-\alpha_n\bar{\alpha}_{n+k}-\bar{\alpha}_n\alpha_{n+k}\right]
\end{equation}
which is \eqref{6.5.5}.
\end{proof}

\begin{proof}[\corO{Proof of Proposition \ref{P6.5.3}}]
  The Verblunsky side of the sum rule associated to \eqref{6.5.1} has quadratic term \eqref{6.5.5} and a remainder that is finite if $\corO{\alpha} \in \ell^4$.  Thus the equivalence is immediate.
\end{proof}

%%%%%%%%%%%%%%%%%%%%%%%%%%%%%%%%%%%%%%%%%%%%%%%%%%%%%%%%%%%%%%
\section{Single $k$th Order Singularity} \lb{s6.6}
%%%%%%%%%%%%%%%%%%%%%%%%%%%%%%%%%%%%%%%%%%%%%%%%%%%%%%%%%%%%%%

We are interested here in measures which obey
\begin{equation}\label{6.6.1}
  \int (1-\cos \theta)^k \log(w(\theta)) \, \frac{d\theta}{2\pi} > -\infty
\end{equation}
Here the Simon--Lukic conditions are
\begin{flalign}\label{6.6.2}
  \textrm{(S1)} &&\mathclap{(S-1)^k\alpha \in \ell^2}&&
\end{flalign}
\begin{flalign}\label{6.6.3}
  \textrm{(S2)} &&\mathclap{\alpha  \in \ell^{2k+2}}&&
\end{flalign}
Our main goal is to prove that
\begin{theorem} \lb{T6.6.1} Suppose $\corO{\alpha} \in \ell^4$.  Then
\begin{equation*}
  \eqref{6.6.1} \iff \eqref{6.6.2}
\end{equation*}
\end{theorem}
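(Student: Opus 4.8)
The plan is to follow the pattern of Sections~\ref{s5} and~\ref{s6}: build the equilibrium measure $\eta$ and the potential $V$, extract the Verblunsky side of the gem from Theorem~\ref{T3.2}, isolate its quadratic part, and then use the hypothesis $\alpha\in\ell^4$ to dispose of everything of higher order.

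First I would compute the data. Since $(1-\cos\theta)^k=2^{-k}|e^{i\theta}-1|^{2k}$, the function $g(\theta)=(1-\cos\theta)^k$ is, by the binomial theorem, a Laurent polynomial $\sum_{\ell=-k}^{k}\widehat g_\ell e^{i\ell\theta}$ of degree $k$ with $\widehat g_\ell=\widehat g_{-\ell}$ and constant term $\widehat g_0=Z_\eta=2^{-k}\binom{2k}{k}$, so $d\eta=\widehat g_0^{-1}g(\theta)\tfrac{d\theta}{2\pi}$. By \eqref{3.4} and Proposition~\ref{P3.1},
\begin{equation*}
  V(\theta)=-\frac1{\widehat g_0}\sum_{\ell\ne0}\frac{\widehat g_\ell}{|\ell|}\,e^{i\ell\theta},
\end{equation*}
a Laurent polynomial of degree $d=k$ with no constant term. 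By Theorem~\ref{T3.2} (with $d=k$), $\tr(V(U))=F_-+F_++\sum_jG(\alpha_j,\dots,\alpha_{j+k})$ with $G(0,\dots,0)=0$, and, sorting the monomials of $G$ by their total degree in $(\alpha,\bar\alpha)$ --- only the even degrees $2,4,\dots,2k$ occur, since the proof of Theorem~\ref{T3.2} puts the $\rho$'s in even powers --- the Verblunsky side \eqref{3.28} equals, up to boundary terms, $\calI_2+\sum_{m=2}^{k}\calI_{2m}+\calL_1$, where $\calI_{2m}=\sum_jG_{2m}(\alpha_j,\dots,\alpha_{j+k})$ with $G_{2m}$ the degree-$2m$ part of $G$, $\calL_1=\sum_n[-\log(1-|\alpha_n|^2)-|\alpha_n|^2]$, and $\calI_2:=\sum_jG_2+\sum_n|\alpha_n|^2$ is the quadratic part with one copy of $\sum_n|\alpha_n|^2$ borrowed from $-\sum_n\log(1-|\alpha_n|^2)$ via $-\log(1-x)=x+[-\log(1-x)-x]$.

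Next I would evaluate $\calI_2$. By Proposition~\ref{P6.5.2}, applied with $k$ replaced by $\ell$ (the proof works for any fixed power), the quadratic part of $\tr(U^\ell)$ is $\textrm{bdy}-\ell\sum_n\alpha_n\bar\alpha_{n+\ell}$, and that of $\tr(U^{-\ell})=\overline{\tr(U^\ell)}$ is $\textrm{bdy}-\ell\sum_n\bar\alpha_n\alpha_{n+\ell}$; feeding these into the formula for $\tr(V(U))$ and using $\widehat g_{-\ell}=\widehat g_\ell$ yields $\sum_jG_2=\widehat g_0^{-1}\sum_{\ell=1}^{k}\widehat g_\ell\sum_n(\alpha_n\bar\alpha_{n+\ell}+\bar\alpha_n\alpha_{n+\ell})$ up to boundary terms. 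Under $\alpha\mapsto\alpha^\flat$ the quadratic form $\sum_n(\alpha_n\bar\alpha_{n+\ell}+\bar\alpha_n\alpha_{n+\ell})$ becomes multiplication by $e^{i\ell\theta}+e^{-i\ell\theta}$ on $L^2(\partial\bbD,\tfrac{d\theta}{2\pi})$, and two such forms with the same coefficients coincide on $\ell^2(\bbZ)$ (the complex counterpart of Proposition~\ref{P5.2}); thus $\calI_2$ is multiplication by $1+\widehat g_0^{-1}\sum_{\ell\ne0}\widehat g_\ell e^{i\ell\theta}=\widehat g_0^{-1}g(\theta)=\widehat g_0^{-1}(1-\cos\theta)^k$, whereas $\norm{(S-1)^k\alpha}_2^2$ is multiplication by $|e^{i\theta}-1|^{2k}=2^k(1-\cos\theta)^k$. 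Since $2^k\widehat g_0=\binom{2k}{k}$, this gives
\begin{equation*}
  \calI_2=\frac1{\binom{2k}{k}}\norm{(S-1)^k\alpha}_2^2\quad\text{up to boundary terms}
\end{equation*}
(for $k=2$ this recovers \eqref{6.15}).

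Finally I would assemble. The Verblunsky side equals, up to boundary terms, $\binom{2k}{k}^{-1}\norm{(S-1)^k\alpha}_2^2+\calL_1+\sum_{m=2}^{k}\calI_{2m}$. Using $\alpha\in\ell^4$: by Proposition~\ref{P4.1} with $M=1$, $0\le\calL_1<\infty$; for $2\le m\le k$ each $\calI_{2m}$ is a finite combination of sums $\sum_n\beta^{(1)}_{n+i_1}\cdots\beta^{(2m)}_{n+i_{2m}}$ with each $\beta^{(r)}\in\{\alpha,\bar\alpha\}$, bounded in absolute value by $\sum_n|\alpha_n|^{2m}\le\sum_n|\alpha_n|^4<\infty$ (H\"older and $|\alpha_n|\le1$), so $|\calI_{2m}|<\infty$, with uniform bounds for the truncations; and $\alpha\in\ell^4$ forces $\alpha_n\to0$, so the boundary terms at the upper endpoint vanish as $N\to\infty$. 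Hence the truncated Verblunsky sums stay bounded in $N$ if and only if $\norm{(S-1)^k\alpha}_2<\infty$; by the abstract gem, Theorem~\ref{T3.5}, the former is equivalent to $H(\eta|\mu)<\infty$, which --- as $\eta$ has bounded, a.e.-positive density --- holds if and only if \eqref{6.6.1}, while $\norm{(S-1)^k\alpha}_2<\infty$ is exactly \eqref{6.6.2}. The main obstacle is the middle step: checking that the quadratic form coming from $V$ is precisely $\binom{2k}{k}^{-1}\norm{(S-1)^k\alpha}_2^2$ up to boundary terms --- this is the mechanism turning the weight singularity $(1-\cos\theta)^k$ into the operator $(S-1)^k$ --- together with the routine but tedious bookkeeping hidden in every ``up to boundary terms''; once $\alpha\in\ell^4$ is granted, the higher-order terms cause no trouble, which is why that hypothesis is imposed.
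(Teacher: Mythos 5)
Your proof is correct and follows the same overall strategy as the paper: compute the normalized $\eta$ and the potential $V$, invoke the Section~\ref{s3} machinery (Theorem~\ref{T3.2}, the abstract gem of Theorem~\ref{T3.5}, and Proposition~\ref{P6.5.2} for the quadratic part of $\tr(U^\ell)$), borrow $|\alpha_n|^2$ from $-\log(1-|\alpha_n|^2)$, and dispose of all terms of degree $\ge 4$ by H\"older together with $\alpha\in\ell^4$ and $|\alpha_n|\le 1$. The one place you genuinely diverge is the identification $\calI_2=\binom{2k}{k}^{-1}\norm{(S-1)^k\alpha}_2^2$: the paper gets this by expanding $(1-\cos\theta)^k$ binomially, rewriting $\eta$ through the identity $0=(1-1)^{2k}$ so that \eqref{5.5} applies, and then matching the coefficients of the $\calP_\ell$ in \eqref{6.6.8} and \eqref{6.6.10} via the Vandermonde identity of Proposition~\ref{P6.6.3}; you instead read $V$ directly off \eqref{3.4} and \eqref{3.7} and compare the two quadratic forms through their symbols, observing that both correspond to multiplication by $(1-\cos\theta)^k$ up to normalization. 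Your symbol argument is shorter, makes the constant $\binom{2k}{k}^{-1}$ transparent without any combinatorics (it is really a generating-function repackaging of the same coefficient matching), while the paper's computation is an explicit finite identity; note that since the sums in the sum rule are one-sided truncations rather than $\ell^2(\bbZ)$ inner products, the Plancherel step must be converted into the statement that two quadratic forms which are the same linear combination of the $\calP_\ell$'s agree up to boundary terms --- the complex analogue of Proposition~\ref{P5.2} --- which you do invoke, so the argument is complete.
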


\begin{remark} This is a special case of a result of Golinskii--Zlato\v{s} \cite{GZ}.
\end{remark}

To put this in perspective, we note that Lukic \cite{Lukic2} has proven
\begin{theorem} [\cite{Lukic2}] Suppose $(S-1)\corO{\alpha} \in \ell^2$.  Then
\begin{equation*}
  \eqref{6.6.1} \iff \eqref{6.6.3}
\end{equation*}
\end{theorem}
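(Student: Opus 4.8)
The plan is to run the large--deviation machinery of Section~\ref{s3} for the single $k$th order singularity at $\theta=0$, in exactly the style of Sections~\ref{s6}--\ref{s6.6}, and then to use the strong a priori hypothesis $(S-1)\alpha\in\ell^2$ to dispose of every piece of the Verblunsky side except the quadratic term and the ``$\ell^{2k+2}$'' term. Here $\eta = Z_k^{-1}(1-\cos\theta)^k\frac{d\theta}{2\pi}$ with $Z_k=\int(1-\cos\theta)^k\frac{d\theta}{2\pi}$; by \eqref{3.4} and Proposition~\ref{P3.1}, $V_k$ is a Laurent polynomial of degree $k$ with no constant term, Theorem~\ref{T3.2} applies, and the abstract gem Theorem~\ref{T3.5} (used just as in Sections~\ref{s6} and \ref{s6.5}) reduces \eqref{6.6.1} to the finiteness of the Verblunsky side
\begin{equation*}
  \textrm{bdy}+\calI_2+\calI_4+\cdots+\calI_{2k}+\calL_{2k+2},
\end{equation*}
where $\calI_{2j}$ is the homogeneous degree-$2j$ part (a translation--invariant, finite--range polynomial in the $\alpha$'s obtained after eliminating the $\rho$'s and ``borrowing'' $\sum_n|\alpha_n|^{2j}/j$ out of $-\log(1-|\alpha_n|^2)$), and
\begin{equation*}
  \calL_{2k+2}=\sum_{n=0}^\infty\Bigl[-\log(1-|\alpha_n|^2)-\sum_{m=1}^k\tfrac{|\alpha_n|^{2m}}{m}\Bigr]\ge 0 .
\end{equation*}
By Proposition~\ref{P4.1}, $\calL_{2k+2}<\infty$ if and only if $\alpha\in\ell^{2k+2}$, i.e.\ \eqref{6.6.3}.

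Next I would pin down the quadratic term. The quadratic part of $\tr(U^m)$ is $-m\sum_n\alpha_n\bar\alpha_{n+m}$ up to boundary terms (same $\rho$--string argument as Proposition~\ref{P6.5.2}), so, feeding in the Fourier coefficients of $V_k$ from Proposition~\ref{P3.1} together with $|e^{i\theta}-1|^{2k}=2^k(1-\cos\theta)^k$, one finds, generalizing \eqref{6.15},
\begin{equation*}
  \calI_2=\frac{2^{-k}}{Z_k}\,\norm{(S-1)^k\alpha}_2^2
\end{equation*}
up to boundary terms; in particular $\calI_2\ge0$ and $\calI_2<\infty$ is exactly \eqref{6.6.2}. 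Since $(S-1)^{k-1}$ is bounded on $\ell^2$, the a priori hypothesis $(S-1)\alpha\in\ell^2$ already forces $\calI_2<\infty$, so \eqref{6.6.2} holds automatically under that hypothesis.

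The core of the argument is the claim that, for every $2\le j\le k$, $\calI_{2j}$ is finite whenever $(S-1)\alpha\in\ell^2$ and $\alpha_n\to0$. I would first establish the algebraic fact that the density polynomial defining $\calI_{2j}$ vanishes identically on constant sequences $\alpha_n\equiv a$ --- equivalently, that $\int V_k\,d\nu_a$ ($\nu_a$ the density of states of the constant--Verblunsky CMV operator) agrees with $\log(1-a^2)$ through order $a^{2k}$, which is checkable from the explicit $V_k$ and the constant--coefficient trace formula and is the general version of the cancellation $\tfrac12-\tfrac16-\tfrac13=0$ in \eqref{6.11}. A translation--invariant finite--range polynomial vanishing on all constant sequences lies in the ideal generated by the finite differences $\alpha_{n+i}-\alpha_{n+i'}$ (and their conjugates), so --- telescoping, which changes only boundary terms, exactly as in the passage \eqref{6.17}--\eqref{6.19} --- $\calI_{2j}$ can be rewritten up to boundary terms as a finite sum $\sum_r\sum_n(\textrm{monomial}_r)$ in which every monomial carries at least one factor $((S-1)^\ell\alpha)_n$ or $((S-1)^\ell\bar\alpha)_n$ with $\ell\ge1$. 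Since $(S-1)\alpha\in\ell^2$ gives $(S-1)^\ell\alpha\in\ell^2\subset\ell^\infty$ for all $\ell\ge1$ while $\alpha\in\ell^\infty$, Hölder's inequality --- one difference factor in $\ell^2$, everything else in $\ell^\infty$ --- yields $\calI_{2j}<\infty$.

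Both implications now follow from the gem. If \eqref{6.6.1} holds, the Verblunsky side is finite and Rakhmanov's Theorem gives $\alpha_n\to0$; hence each $\calI_{2j}$ ($2\le j\le k$) is finite, while $\calI_2\ge0$ and $\calL_{2k+2}\ge0$, so $\calL_{2k+2}<\infty$ and therefore $\alpha\in\ell^{2k+2}$. Conversely, if $\alpha\in\ell^{2k+2}$ then $\alpha_n\to0$, Proposition~\ref{P4.1} gives $\calL_{2k+2}<\infty$, the a priori hypothesis gives $\calI_2<\infty$, and the previous paragraph gives each $\calI_{2j}<\infty$; so the Verblunsky side is finite and the gem yields \eqref{6.6.1}. (The reality normalization $\bar\alpha=\alpha$ is used only to keep the formulas light, as in Sections~\ref{s6}--\ref{s6.6}; the complex case is identical.) The one step that genuinely requires work --- the main obstacle --- is the algebraic claim that every $\calI_{2j}$ vanishes on constant sequences; obtaining it without an ad hoc, $k$--by--$k$ computation is the real content of the proof.
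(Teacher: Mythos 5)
A preliminary point of status: the paper does not prove this theorem at all — it is quoted from Lukic \cite{Lukic2}, whose argument is by entirely different (non large--deviation) methods — so your proposal is a new route and has to stand on its own. Its skeleton is sensible and parallels Section \ref{s6.6}: the abstract gem, the identification $\calI_2=c_k\norm{(S-1)^k\alpha}_2^2$ (your constant $2^{-k}/Z_k$ is indeed $1/\binom{2k}{k}$, matching \eqref{6.6.6}), the observation that $(S-1)\alpha\in\ell^2$ makes $\calI_2$ automatically finite, and Proposition \ref{P4.1} for $\calL_{2k+2}$ are all fine. The trouble is concentrated in your treatment of the intermediate terms $\calI_{2j}$, $2\le j\le k$, and it is genuine.

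The decisive error is the H\"older step. Membership in the ideal of the differences (i.e.\ vanishing on constant sequences) guarantees only \emph{one} difference factor per monomial, and a sum $\sum_n x_ny_n$ with $x\in\ell^2$, $y\in\ell^\infty$ need not converge: the exponents give $\tfrac12+0<1$, so ``one factor in $\ell^2$, the rest in $\ell^\infty$'' bounds the summand in $\ell^2$, not in $\ell^1$. Terms such as $\sum_n\alpha_n^{2j-1}\bigl((S-1)\alpha\bigr)_n$ are not absolutely summable under your hypotheses; they are only conditionally summable, and establishing that requires the summation--by--parts mechanism of Proposition \ref{P9.2}(c): up to error terms carrying two difference factors (these are $\ell^1$ by Cauchy--Schwarz, since $(S-1)\alpha\in\ell^2$ and $\alpha$ is bounded), such a term equals $\tfrac{1}{2j}\sum_n\bigl[(S-1)\alpha^{2j}\bigr]_n$, which telescopes and vanishes in the limit once $\alpha_n\to0$. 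With that repair the argument can be made to work, but then the bookkeeping in both directions of the gem must be done with conditional limits of partial sums rather than with ``each piece is finite''. Separately, the central algebraic claim — that the density of each $\calI_{2j}$, $2\le j\le k$, vanishes on constant sequences — is asserted rather than proved, and you yourself flag it as the real content; as written this is a hole. The claim is in fact true and has a clean proof: for constant $\alpha\equiv a$ the density of states is supported on an arc on which its logarithmic potential equals $\tfrac12\log(1-a^2)$ (Thouless formula, zero Lyapunov exponent on the essential spectrum), so the per--site quantity $G_V(a,\dots,a)-\log(1-a^2)$ equals $2\int_{\mathrm{gap}}\gamma\,d\eta$, which is $O(a^{2k+2})$ because the gap has width $O(|a|)$, $\gamma=O(|a|)$ there, and $d\eta$ vanishes to order $2k$ at the gap's center $\theta=0$; comparing Taylor coefficients then kills every degree $2j\le 2k$. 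Some such argument is indispensable, because the property is special to $\eta=Z_k^{-1}(1-\cos\theta)^k\tfrac{d\theta}{2\pi}$: for instance the top density \eqref{9.30} in the $(2,1)$ case does not vanish on constants, so this cannot be waved through as a generic feature of sum--rule densities.
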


These two extreme cases are consistent with $\eqref{6.6.1}\iff$(S1-2) and suggest its truth.

The key to our proof will be to show that the quadratic term in the sum rule is $c_k\norm{(S-1)^k\alpha}_2^2$ for an explicit $c_k$.  We've seen that $c_1=\tfrac{1}{2}$ \eqref{4.5} and $c_2=\tfrac{1}{6}$ \eqref{6.15}.  The reader might stop and try to figure out the general formula.

By \eqref{6.4}
\begin{align}
  2^k(1-\cos\theta)^k &=(-1)^k(e^{i\theta/2}-e^{-i\theta/2})^{2k} \nonumber    \\
                      &= \sum_{j=0}^{2k}\binom{2k}{j}(-1)^{j-k} e^{i(j-k)\theta} \lb{6.6.4}
\end{align}
Thus the normalized $\eta$ is
\begin{equation}\label{6.6.5}
  c_k\sum_{j=0}^{2k}\binom{2k}{j}(-1)^{j-k} e^{i(j-k)\theta} \, \frac{d\theta}{2\pi}
\end{equation}
where
\begin{equation}\label{6.6.6}
  c_k \equiv \frac{1}{\binom{2k}{k}} = \frac{(k!)^2}{(2k)!}
\end{equation}

\corOR{Using the binomial expansion $0=(1-1)^{2k}$, we have that
\[\binom{2k}{k}=-2\sum_{j=k+1}^{2k} \binom{2k}{j}(-1)^{j-k}.\] Therefore,
we may rewrite \eqref{6.6.5} as
\begin{equation}\label{6.6.5bis}
  -2c_k\sum_{j=k+1}^{2k}\binom{2k}{j}(-1)^{j-k} (1-\cos((j-k)\theta))
	\, \frac{d\theta}{2\pi}.
\end{equation}
It follows from  \eqref{5.5} that
\begin{equation}\label{6.6.7}
  V(\theta) = -2c_k\sum_{j=k+1}^{2k}\frac{1}{j-k} \binom{2k}{j}(-1)^{j-k} \cos((j-k)\theta)
\end{equation}
Recalling that we need to borrow $|\alpha_n|^2$ from $-\log(1-|\alpha_n|^2)$, and that the quadratic term in $\tr(U^\ell+\bar{U}^\ell)$ equals
$-\ell \sum_{n=0}^{\infty} (\alpha_n\bar{\alpha}_{n+\ell}+\bar{\alpha}_n\alpha_{n+\ell})$ up to boundary terms,} we see that the quadratic term in the sum rule is
\begin{equation}\label{6.6.8}
 \corOR{
   \calI_2 = \calP_0+2c_k\sum_{j=k+1}^{2k} \binom{2k}{j}(-1)^{j-k}\calP_{j-k}}
\end{equation}
where now, instead of \corJ{\eqref{5.15}}
\begin{equation}\label{6.6.9}
\calP_\ell = \tfrac{1}{2}\sum_{n=0}^{\infty} (\alpha_n\bar{\alpha}_{n+\ell}+\bar{\alpha}_n\alpha_{n+\ell})
\end{equation}
%
%In the above, the added $j=k$ term is just the $\sum_{n=0}^{\infty}|\alpha_n|^2$.

On the other hand,
\begin{align}
  \norm{(S-1)^k\alpha}_2^2 &= \sum_{n=0}^{\infty} \left|\sum_{j=0}^{k} \binom{k}{j}(-1)^j \alpha_{n+j}\right|^2 \nonumber  \\
                           &= \corOR{\mbox{\rm bdy}}+
													\corOR{\sum_{j=0}^k  \binom{k}{j}^2 \calP_0}+
													\corOR{2}\sum_{\corOR{\ell=1}}^{k}\left[\sum_{j=0}^{k-\ell} \binom{k}{j} \binom{k}{\ell+j} \right] (-1)^\ell \calP_\ell \lb{6.6.10}
\end{align}
where we use the fact a $\alpha_{n+j_1} \bar{\alpha}_{n+j_2}$ term will contribute to $\calP_\ell$ if $\ell=|j_1-j_2|$.

\begin{proposition} \lb{P6.6.3} For any $k=0,1,2,\dots$ and $\ell=0,\dots,k$, we have that
\begin{equation}\label{6.6.11}
  \sum_{j=0}^{k-\ell} \binom{k}{j} \binom{k}{\ell+j} = \binom{2k}{k+\ell}
\end{equation}
\end{proposition}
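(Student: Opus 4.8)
The plan is to recognize \eqref{6.6.11} as an instance of the Chu--Vandermonde convolution. First I would use the symmetry $\binom{k}{\ell+j}=\binom{k}{k-\ell-j}$ to rewrite the left-hand side as $\sum_{j=0}^{k-\ell}\binom{k}{j}\binom{k}{(k-\ell)-j}$. Next I would observe that whenever $j$ lies outside the range $0\le j\le k-\ell$ at least one of the two factors $\binom{k}{j}$, $\binom{k}{(k-\ell)-j}$ vanishes, so the value of the sum is unchanged if $j$ is allowed to range over all of $\bbZ$. The resulting bi-infinite sum is precisely the coefficient of $x^{k-\ell}$ in the product $(1+x)^k(1+x)^k=(1+x)^{2k}$, hence equals $\binom{2k}{k-\ell}$. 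Finally I would invoke $\binom{2k}{k-\ell}=\binom{2k}{2k-(k-\ell)}=\binom{2k}{k+\ell}$ to reach the claimed identity.

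There is no real obstacle here; the only point needing a line of care is verifying that the stated truncated range $j=0,\dots,k-\ell$ already captures every nonzero summand, which makes the passage to the bi-infinite sum (where the Vandermonde identity is stated most cleanly) legitimate. Alternatively, one can give a self-contained combinatorial argument: $\binom{2k}{k+\ell}$ counts the $(k+\ell)$-element subsets of a $2k$-element set partitioned into two blocks $A$ and $B$ of size $k$ each, and grouping these subsets by how many elements are taken from $A$ reproduces $\sum_{j}\binom{k}{j}\binom{k}{\ell+j}$ after relabelling by the number of elements of $A$ left unchosen. Either route is a few lines.

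Once \eqref{6.6.11} is in hand, I would substitute it into \eqref{6.6.10}, so that the coefficient of $\calP_\ell$ in $\norm{(S-1)^k\alpha}_2^2$ becomes $2(-1)^\ell\binom{2k}{k+\ell}$ for $\ell\ge 1$ and $\binom{2k}{k}$ for $\ell=0$; comparing with \eqref{6.6.8} and recalling $c_k=\binom{2k}{k}^{-1}$ will then identify $\calI_2$ with $c_k\norm{(S-1)^k\alpha}_2^2$ up to boundary terms, which is the quadratic input needed for Theorem \ref{T6.6.1}.
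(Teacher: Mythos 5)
Your proof is correct and amounts to the same argument as the paper's: after the symmetry $\binom{k}{\ell+j}=\binom{k}{k-\ell-j}$ both reduce the identity to the Chu--Vandermonde convolution $\sum_j\binom{k}{j}\binom{k}{k-\ell-j}=\binom{2k}{k-\ell}$, which the paper establishes by exactly the two-block counting argument you offer as your alternative (your generating-function phrasing is just another standard proof of the same convolution). No gaps; the final paragraph matching coefficients of $\calP_\ell$ is also the paper's use of the identity.
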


\begin{proof} To pick $k-\ell$ elements from among $2k$ numbered objects, we can pick $j$ from the first $k$ and $k-\ell-j$ from the second.  Thus
\begin{equation}\label{6.6.12}
  \sum_{j=0}^{k-\ell}\binom{k}{j}\binom{k}{k-\ell-j} = \binom{2k}{k-\ell}
\end{equation}
Since $\binom{p}{q}=\binom{p}{p-q}$, we have that $\binom{k}{k-\ell-j}=\binom{k}{\ell+j}$ and $\binom{2k}{k-\ell}=\binom{2k}{k+\ell}$. \corJ{We thus} get \eqref{6.6.11}.
\end{proof}

\begin{proof} [Proof of Theorem \ref{T6.6.1}] Picking $j-k=\ell$  in \eqref{6.6.8}, we see that
\begin{equation*}
 \corOR{ \calI_2 = \calP_0+2c_k\sum_{\ell=1}^{k} \binom{2k}{k+\ell}
(-1)^\ell \calP_\ell}
\end{equation*}
which by \eqref{6.6.10},\eqref{6.6.11} and \corOR{\eqref{6.6.6}}
equals $c_k\norm{(S-1)^k\alpha}_2^2$.  When $\alpha \in \ell^4$, by H\"{o}lder's inequality, all terms in the sum rule but the quadratic are finite.  So the Verblunsky side of the sum rule is finite if and only if $\norm{(S-1)^k\alpha}_2^2 <\infty$.  By the sum rule, we conclude the result.
\end{proof}

%%%%%%%%%%%%%%%%%%%%%%%%%%%%%%%%%%%%%%%%%%%%%%%%%%%%%%%%%%%%%%
\section{The (2,1) Case} \lb{s7}
%%%%%%%%%%%%%%%%%%%%%%%%%%%%%%%%%%%%%%%%%%%%%%%%%%%%%%%%%%%%%%

Our \corJ{main result in this section is half} the Lukic conjecture in the $(2,1)$ case, specifically:

\begin{theorem} \lb{T9.1} Let $\mu$ be a probability measure on $\partial\bbD$ of the form \eqref{1.5} with real Verblunsky coefficients $\{\alpha_j\}_{j=0}^\infty$ obeying \eqref{1.15}--\eqref{1.17}.  Then the integral on the left side of \corJ{\eqref{1.11}} is finite.
\end{theorem}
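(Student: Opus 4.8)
The plan is to apply the large-deviation sum-rule/gem machinery of Section~\ref{s3} to the external field attached to $\eta\propto(1-\cos\theta)^2(1+\cos\theta)\,\tfrac{d\theta}{2\pi}$ and then to show that the Lukic conditions \eqref{1.15}--\eqref{1.17} force the Verblunsky side of the resulting gem to be finite. By the Abstract Gem (Theorem~\ref{T3.5}) this gives $H(\eta\,|\,\mu)<\infty$; and since $d\eta=Z^{-1}(1-\cos\theta)^2(1+\cos\theta)\,\tfrac{d\theta}{2\pi}$ with $Z=\int(1-\cos\theta)^2(1+\cos\theta)\tfrac{d\theta}{2\pi}=\tfrac12$, one has $H(\eta\,|\,\mu)=\log Z^{-1}+Z^{-1}\!\int(1-\cos\theta)^2(1+\cos\theta)\log\!\big((1-\cos\theta)^2(1+\cos\theta)\big)\tfrac{d\theta}{2\pi}-Z^{-1}\!\int(1-\cos\theta)^2(1+\cos\theta)\log w\,\tfrac{d\theta}{2\pi}$, so $H(\eta\,|\,\mu)<\infty$ is exactly finiteness of the integral in \eqref{1.11}. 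Concretely, $(1-\cos\theta)^2(1+\cos\theta)=\tfrac12-\tfrac14\cos\theta-\tfrac12\cos2\theta+\tfrac14\cos3\theta$, so $d\eta=\big[1-\tfrac12\cos\theta-\cos2\theta+\tfrac12\cos3\theta\big]\tfrac{d\theta}{2\pi}$; by \eqref{3.4} and Proposition~\ref{P3.1}, $V(\theta)=\tfrac12\cos\theta+\tfrac12\cos2\theta-\tfrac16\cos3\theta$, a Laurent polynomial of degree $d=3$ with no constant term, and, for real $\alpha$, $\tr(V(U))=\tfrac12\tr(U)+\tfrac12\tr(U^2)-\tfrac16\tr(U^3)$.

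By Theorem~\ref{T3.2}, $\tr(V(U))=\textrm{bdy}+\sum_j G(\alpha_j,\dots,\alpha_{j+3})$ with $G(0,\dots,0)=0$ and $G$ a polynomial whose homogeneous components have degrees $2,4,6$. Borrowing $\alpha_j^2+\tfrac12\alpha_j^4$ from $-\log(1-\alpha_j^2)$, the Verblunsky side of the gem \eqref{3.28} equals, up to boundary terms, $\calI_2+\calI_4+\calI_6+\calL_6$, where $\calL_6=\sum_j\big[-\log(1-\alpha_j^2)-\alpha_j^2-\tfrac12\alpha_j^4\big]\ge0$ is finite by Proposition~\ref{P4.1} (with $M=2$) and \eqref{1.17}; $\calI_6$ is a fixed linear combination of sums $\sum_n\prod_{i=1}^6\alpha_{n+a_i}$ with $0\le a_i\le3$, each dominated by $\tfrac16\sum_i\norm{\alpha}_6^6<\infty$ by \eqref{1.17}; $\calI_2$ is the degree-$2$ part together with $\calP_0$, and, using Proposition~\ref{P6.5.2} (the quadratic part of $\tr(U^k)$ is $\textrm{bdy}-k\calP_k$ in the notation \eqref{5.15}, for real $\alpha$) and Proposition~\ref{P5.2}, $\calI_2=\calP_0-\tfrac12\calP_1-\calP_2+\tfrac12\calP_3=\tfrac14\norm{(S+1)(S-1)^2\alpha}_2^2$ up to boundary terms, which is finite by \eqref{1.15}; and $\calI_4$ is the degree-$4$ part of $\tr(V(U))$ written in the $\alpha$'s, together with the borrowed $\tfrac12\sum_j\alpha_j^4$. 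So everything reduces to showing $\calI_4<\infty$.

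This is the core of the argument. From Theorems~\ref{T2.5} and~\ref{T2.6} the hypotheses also yield $(S-1)\alpha\in\ell^4$ and $(S^2-1)\alpha\in\ell^3$ (equation \eqref{2.24}), while \eqref{1.15} reads $(S+1)(S-1)^2\alpha=(S-1)\big[(S^2-1)\alpha\big]\in\ell^2$; together with $\alpha\in\ell^6$ these are the discrete derivatives available. As in Sections~\ref{s5}--\ref{s6} one would use the Leibniz identity \eqref{2.14} for $S-1$ and repeated telescoping to rewrite $\calI_4$ as a finite sum of quartic terms, each absolutely summable by H\"older because its four factors have reciprocal exponents summing to at least $1$: the admissible shapes are products containing a factor $(S+1)(S-1)^2\alpha$ (the remaining three then contribute at least $\tfrac12$), products of two factors $(S^2-1)\alpha$ and two $\alpha$'s ($\tfrac23+\tfrac13=1$), products of four difference factors, and ``good squares'' $\sum_n(D_1\alpha)^2_n(D_2\alpha)^2_n$ with $D_1,D_2$ both difference operators, so that $(D_1\alpha)(D_2\alpha)\in\ell^2$. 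Equivalently, one may first pass via Theorems~\ref{T2.1} and~\ref{T2.4} to a Lukic decomposition $\alpha=\beta^{(1)}+\beta^{(2)}$ (with $(S-1)^2\beta^{(1)}\in\ell^2$, $\beta^{(1)}\in\ell^6$, $(S+1)\beta^{(2)}\in\ell^2$, $\beta^{(2)}\in\ell^4$), expand $\calI_4$, handle the all-$\beta^{(1)}$ part by the methods of Section~\ref{s6} (using $(S-1)^2\beta^{(1)}\in\ell^2$ and, via \eqref{2.11}, $(S-1)\beta^{(1)}\in\ell^3$), the all-$\beta^{(2)}$ part trivially, and the mixed terms by the difference identities above.

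The step I expect to be the main obstacle is precisely this rewriting of $\calI_4$. Unlike the $(2,0)$ case of Section~\ref{s6}, one has neither $(S-1)^2\alpha\in\ell^2$ nor $(S-1)\alpha\in\ell^3$, so the Section~\ref{s6} identities do not close: they generate terms of the shapes $\alpha^3(S-1)^2\alpha$ and $\alpha^2\big((S-1)\alpha\big)^2$, whose reciprocal exponents only reach $\tfrac34$ and $\tfrac56$. One must instead arrange every difference either to carry an extra factor $(S+1)$ — placing it in $\ell^2$ by \eqref{1.15} — or to appear as the two-step difference $(S^2-1)\alpha\in\ell^3$, the structure reflecting the order-one singularity at $\theta=\pi$; in particular the potentially divergent diagonal pieces, including $\tfrac12\sum_j\alpha_j^4$, must be absorbed into good squares. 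Checking that the explicit degree-$4$ part of $\tfrac12\tr(U^2)-\tfrac16\tr(U^3)$ — most efficiently obtained from the $\calL\calM$ or GGT form of $U$ as in Theorem~\ref{T3.2} — can be put into this form is the bulk of the remaining work; once it is, $\calI_4<\infty$, the Verblunsky side is finite, and Theorem~\ref{T3.5} yields $H(\eta\,|\,\mu)<\infty$, i.e.\ the integral in \eqref{1.11} is finite.
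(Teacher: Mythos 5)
Your setup reproduces the paper's: the same $\eta$, the same potential $V(\theta)=\tfrac12\cos\theta+\tfrac12\cos2\theta-\tfrac16\cos3\theta$, the same splitting of the Verblunsky side into $\calI_2+\calI_4+\calI_6+(\text{log remainder})$, the identification $\calI_2=\tfrac14\norm{(S-1)^2(S+1)\alpha}_2^2$ up to boundary terms, and the disposal of $\calI_6$ and the log remainder by H\"older and Proposition \ref{P4.1} under \eqref{1.17}. But the heart of the theorem is precisely the step you defer: showing that $\calI_4$ (explicitly \eqref{9.29B}, plus the borrowed $\tfrac12\sum\alpha_j^4$) is summable, and the plan you outline for it does not close as stated. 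Your list of ``admissible shapes'' consists of terms that are \emph{absolutely} summable by H\"older (a factor $(S+1)(S-1)^2\alpha\in\ell^2$ against three $\alpha$'s; two factors $(S^2-1)\alpha\in\ell^3$ against two $\alpha$'s; four differences; good squares). However, the quartic part of $\tfrac12\tr(U^2)-\tfrac16\tr(U^3)$ unavoidably produces terms of the shape three $\alpha$'s times one $(S^2-1)\alpha$, whose H\"older exponents sum only to $\tfrac16+\tfrac16+\tfrac16+\tfrac13=\tfrac56<1$, as well as the bare diagonal $\sum_j\alpha_j^4$, which is not finite under \eqref{1.15}--\eqref{1.17} alone. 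No purely algebraic rearrangement puts these into your absolutely summable list; the paper's resolution is of a different nature: it proves only \emph{conditional} convergence, via a discrete summation by parts with $D=S^2-1$ (Proposition \ref{P9.2}(c), the discrete analog of $\int f^3f'\,dx=\tfrac14\int(f^4)'\,dx\to0$), which shows that every sum $\sum_j\alpha_{j+m_1}\alpha_{j+m_2}\alpha_{j+m_3}[(S^2-1)\alpha]_{j+m_4}$ converges conditionally, the H\"older-admissible cases (a),(b) being used as $\ell^1$ error control inside that argument.

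The second missing ingredient is where \eqref{1.16} actually enters. After using Proposition \ref{P9.2}(c) to shift indices in \eqref{9.29B} (replacing $\alpha_{j-2}$ by $\alpha_j$, $\alpha_{j-3}$ by $\alpha_{j-1}$) and to absorb $\sum(\alpha_j^3\alpha_{j-1}-\alpha_j\alpha_{j-1}^3)$, the whole of $\calI_4$ reduces, modulo conditionally convergent sums, to $\wti{\calI}_4=\sum_j[\alpha_j^4+3\alpha_j^2\alpha_{j-1}^2-4\alpha_j^3\alpha_{j-1}]$, which is exactly $\tfrac12\sum_j(\alpha_j-\alpha_{j-1})^4$ up to the same conditionally convergent corrections; this is finite because Theorem \ref{T2.6} upgrades \eqref{1.16} to $(S-1)\alpha\in\ell^4$. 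Your remark that the divergent diagonal ``must be absorbed into good squares'' points in this direction, but you supply neither the conditional-convergence mechanism that makes the absorption legitimate nor the explicit identity tying the residual combination to $\norm{(S-1)\alpha}_4^4$, and you acknowledge that verifying the rewriting is ``the bulk of the remaining work.'' That remaining work is the proof; as it stands the proposal is an accurate reduction plus a program whose stated tools (absolute H\"older summability of a fixed list of shapes) are insufficient for the terms that actually occur.
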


\begin{remark} As noted, this is important because there are examples where Simon's conditions (i.e.\ \eqref{1.15} and \eqref{1.17} without \eqref{1.16}) hold, but the integral in \eqref{1.11} is $-\infty$.
\end{remark}

We'll compute the sum rule guaranteed by Section \ref{s3} to say
$\calI_2+\calI_4+\calI_6+\calL_8 < \infty \iff$ the integral in \eqref{1.12}
is finite, \corO{see \eqref{9.28}-\eqref{9.30} for notation}.  Then we'll show that \eqref{1.15}--\eqref{1.17} $\Rightarrow \calI_2 < \infty, \, \calI_4 < \infty, \, \calI_6 < \infty, \, \calL_8 < \infty$.  We start by computing the potential, $V$, of \eqref{3.4} for the $(2,1)$ case.  As noted (see \eqref{5.2}), we have that
\begin{equation}\label{9.1}
  \cos^2 \theta = \tfrac{1}{4}(e^{i\theta}+e^{-i\theta})^2 = \tfrac{1}{2} \cos 2\theta + \tfrac{1}{2}
\end{equation}
Similarly
\begin{equation}\label{9.2}
  \cos^3 \theta = \tfrac{1}{8}(e^{i\theta}+e^{-i\theta})^3 = \tfrac{1}{4} \cos 3\theta + \tfrac{3}{4}\cos \theta
\end{equation}

Thus
\begin{align}
  P(\theta) &= (1-\cos \theta)^2 (1+\cos \theta)  \nonumber \\
            &= (1-\cos^2 \theta) (1-\cos \theta) \nonumber \\
            &= 1-\cos \theta - \cos^2 \theta  + \cos^3 \theta \lb{9.2A} \\
            &= \tfrac{1}{2} - \tfrac{1}{4} \cos \theta - \tfrac{1}{2} \cos 2\theta + \tfrac{1}{4} \cos 3\theta \lb{9.2B}
\end{align}
by \eqref{9.1}--\eqref{9.2}.  Thus, since $\int \cos k\theta \tfrac{d\theta}{2\pi} = \delta_{k 0}$, the normalized $d\eta$ is
\begin{equation}\label{9.3}
  d\eta(\theta) = \left(1 - \tfrac{1}{2} \cos \theta - \cos 2\theta + \tfrac{1}{2} \cos 3\theta\right)\tfrac{d\theta}{2\pi}
\end{equation}

Using \eqref{3.4} and \eqref{3.7}, we conclude \corJ{that}
\begin{equation}\label{9.4}
  V(\theta) = \tfrac{1}{2} \cos \theta + \tfrac{1}{2} \cos 2\theta - \tfrac{1}{6} \cos 3\theta
\end{equation}
\corJ{so that if $\alpha$ is real then}
\begin{equation}\label{9.5}
  \tr(V(U)) = \tfrac{1}{2} \tr(U) + \tfrac{1}{2} \tr(U^2) - \tfrac{1}{6} \tr(U^3)
\end{equation}

In earlier sections, we used the CMV matrix representation to compute $\tr(U)$ and $\tr(U^2)$.  While initially we computed $\tr(U^3)$ in this way also, we realized the calculations are simpler in the GGT matrix representation.  (GGT and CMV representations are discussed in Section 4.1 and 4.2 of Simon \cite{OPUC1}.)  This is given by
\begin{equation}\label{9.6}
  \calG_{k\ell} = \jap{\varphi_k, z \varphi_\ell}
\end{equation}
The explicit calculation is (Simon \cite[(4.15)]{OPUC1})
\beq \lb{9.7}
\calG_{k\ell}=\left \{ \begin{array}{cc}
-\overline{\alpha}_\ell \alpha_{k-1}\prod_{j=k}^{\ell-1} \rho_{j} & 0 \leq k \leq \ell \\
\rho_\ell & k=\ell+1 \\
0 & k\geq \ell+2
\end{array} \right.
\eeq

In \cite{OPUC1}, this is calculated using $\jap{\Phi_n^*,P} = \norm{\Phi_n}^2 P(0)$ if $\deg P \le n$.  An easier alternative is to use \corO{the}
Szeg\H{o} recursion (\cite[(1.5.25)]{OPUC1}) and inverse Szeg\H{o} recursion (\cite[1.5.46]{OPUC1})
\begin{align}
  z\varphi_n(z) &= \rho_n \varphi_{n+1}(z) + \bar{\alpha}_n \varphi_n^*(z) \lb{9.8} \\
  \varphi_j^*(z) &= \rho_{j-1} \varphi_{j-1}^*(z) - \alpha_{j-1} \varphi_j(z) \lb{9.9}
\end{align}
so
\begin{align}
  z\varphi_n(z) &= \rho_n \varphi_{n+1}(z) - \bar{\alpha}_n \alpha_{n-1} \varphi_n(z) + \bar{\alpha}_n \rho_{n-1} \varphi_{n-1}^*(z) \nonumber \\
                &= \rho_n \varphi_{n+1}(z) - \bar{\alpha}_n \alpha_{n-1} \varphi_n(z) - \bar{\alpha}_n \rho_{n-1} \alpha_{n-2} \varphi_{n-1}(z) \nonumber \\
                &\null \qquad \qquad + \bar{\alpha}_n \rho_{n-1} \rho_{n-2} \varphi_{n-2}^*(z) \lb{9.10}
\end{align}
which upon iterating yields
\begin{equation}\label{9.11}
  z\varphi_n(z) = \rho_n \varphi_{n+1}(z) + \sum_{k=0}^{n} \calG_{kn} \varphi_k(z)
\end{equation}
with $\calG$ given by \eqref{9.7}.

When dealing with the GGT representation, it \corO{can} be an issue that $\{\varphi_n\}_{n=0}^\infty$ is not a basis but the calculations need only be done for finite matrices where the OPs are a basis (or one can use the extended GGT basis of \cite[Section 4.1]{OPUC1} noting that diagonal matrix elements of $\calG^q$ in the extra basis elements are zero).

Define $\calG^{(\ell)}$ to be the $\ell$th diagonal of $\calG$ so ($j,k = 0,\dots,n-1$)
\begin{align}
  \calG^{(-1)}_{jk} &= \rho_k \delta_{k,j-1} \lb{9.12} \\
  \calG^{(\ell)}_{jk} &= -\bar{\alpha}_k \alpha_{j-1} \left[\prod_{m=j}^{k-1} \rho_{m}\right] \delta_{k,j+\ell} \qquad \ell \ge 0 \lb{9.13} \\
  \calG &= \sum_{\ell=-1}^{n-1} \calG^{(\ell)} \lb{9.14}
\end{align}

Of course, only $\calG^{(\ell_1)}\dots\calG^{(\ell_q)}$ with $\sum_{m=1}^{q} \ell_m = 0$ have non--zero main diagonal and so if we expand $\calG^q$ using \eqref{9.14}, only those terms contribute to $\tr(\calG^q)$ so
\begin{equation}\label{9.15}
  \tr(\calG^q) = \sum_{\substack{\ell_1,\dots,\ell_q\\ \sum_{m=1}^{q} \ell_m = 0}}\tr\left(\calG^{(\ell_1)}\dots\calG^{(\ell_q)}\right)
\end{equation}

We can now understand why calculations are easier with the GGT than CMV matrix.  In \eqref{9.15}, the sums start at $\ell_m=-1$ while in the analog for CMV, we start at $\ell_m=-2$, so at least for $q$ not too large, there are fewer terms with GGT.  Moreover, the form of \eqref{9.12}--\eqref{9.13} is covariant under translation along the diagonal while the CMV matrix diagonal\corJ{s} have an even--odd structure.

For $q=1$, we must have $\ell_1=0$ and for $q=2$, we have $(\ell_1,\ell_2)=(0,0),(1,-1)$ or $(-1,1)$.  Moreover, by cyclicity of the trace, the $(1,-1)$ and $(-1,1)$ terms are equal, i.e. $\tr(\calG^2) = \tr\left((\calG^{(0)})^2\right)+2\tr\left(\calG^{(1)}\calG^{(-1)}\right)$.  We thus recover \eqref{4.4} and \eqref{5.8} when $\alpha$ is real, that is up to boundary terms:
\begin{align}
  \tr(\calG) &= I_{1,2} = -\sum_{j=1}^{\infty} \alpha_j \alpha_{j-1} \lb{9.16} \\
  \tr(\calG^2) &= I_{2,2}+I_{2,4} \qquad \qquad I_{2,2} = -2\sum_{j=2}^{\infty}\alpha_j \alpha_{j-2} \lb{9.17} \\
  I_{2,4} &= \sum_{j=2}^{\infty}\alpha_j^2 \alpha_{j-1}^2 + 2 \alpha_j \alpha_{j-1}^2 \alpha_{j-2} \lb{9.18}
\end{align}

For $\tr(\calG^3)$, we have up to cyclic permutations, $(\ell_1,\ell_2,\ell_3)=(0,0,0)$ (once), $(2,-1,-1), (0,1,-1), (0,-1,1)$ (each three times).  Thus up to boundary terms:
\begin{align}
  \tr(\calG^3) &= \tr\left((\calG^{(0)})^3\right)+3\tr\left(\calG^{(2)}(\calG^{(-1)})^2\right)+3\tr\left(\calG^{(0)}\calG^{(1)}\calG^{(-1)}\right) \nonumber \\
  &\null\qquad\qquad +3\tr\left(\calG^{(0)}\calG^{(-1)}\calG^{(1)}\right) \lb{9.19}\\
   &= I_{3,2}+I_{3,4}+I_{3,6} \lb{9.20} \\
   I_{3,2} &= -3\sum_{j=3}^{\infty}\alpha_j \alpha_{j-3} \lb{9.21} \\
   I_{3,4} &= 3\sum_{j=3}^{\infty} \alpha_j(\alpha_{j-1}^2+\alpha_{j-2}^2)\alpha_{j-3} \nonumber \\
   &\null \qquad \qquad + 3\sum_{j=2}^{\infty}\alpha_j\alpha_{j-1}(\alpha_j\alpha_{j-2}+\alpha_{j+1}\alpha_{j-1}) \lb{9.22} \\
   I_{3,6} &= -\sum_{j=1}^{\infty}\alpha_j^3\alpha_{j-1}^3 - 3\sum_{j=3}^{\infty} \alpha_j\alpha_{j-1}^2\alpha_{j-2}^2\alpha_{j-3} \nonumber \\
   &\null\qquad\qquad -3\sum_{j=2}^{\infty} \alpha_j^2\alpha_{j-1}^3\alpha_{j-2} - 3\sum_{j=1}^{\infty} \alpha_j^3\alpha_{j+1}\alpha_{j-1}^2 \lb{9.23}
\end{align}

We also write
\begin{align}
  \calL &\equiv \sum_{j=0}^{\infty}\log(1-\alpha_j^2) = I_{\calL,2} + I_{\calL,4} + I_{\calL,2} + \calL_8 \lb{9.24} \\
  I_{\calL,2} &= \sum_{j=0}^{\infty} \alpha_j^2 \qquad I_{\calL,4}=\tfrac{1}{2}\sum_{j=0}^{\infty} \alpha_j^4 \qquad I_{\calL,6} = \tfrac{1}{3}\sum_{j=0}^{\infty} \alpha_j^6 \lb{9.25} \\
  \calL_8&=-\sum_{j=0}^{\infty} \left[\log(1-\alpha_j^2) - \alpha_j^2 - \tfrac{1}{2} \alpha_j^4 - \tfrac{1}{3} \alpha_j^6 \right] \lb{9.27}
\end{align}
so the \corJ{coefficient side of the} sum rule is $\calI_2+\calI_4+\calI_6+\calL_8$ where
\begin{align}
  \calI_2 &= I_{\calL,2}+\tfrac{1}{2}I_{1,2}+\tfrac{1}{2}I_{2,2}-\tfrac{1}{6}I_{3,2} \lb{9.28} \\
          &= \tfrac{1}{2}\sum_j \left[2\alpha_j^2-\alpha_j\alpha_{j-1}-2\alpha_j\alpha_{j-2}+\alpha_j\alpha_{j-3}\right] \lb{9.29} \\
  \calI_4 &= I_{\calL,4}+\tfrac{1}{2}I_{2,4}-\tfrac{1}{6}I_{3,4} \lb{9.29A} \\
          &= \tfrac{1}{2}\sum_j \left[\alpha_j^4+\alpha_j^2\alpha_{j-1}^2+2\alpha_j\alpha_{j-1}^2\alpha_{j-2}-\alpha_j\alpha_{j-1}^2\alpha_{j-3} \right. \nonumber \\
          &\null\qquad\qquad \left.-\alpha_j\alpha_{j-2}^2\alpha_{j-3} -\alpha_j^2\alpha_{j-1}\alpha_{j-2} - \alpha_j\alpha_{j-1}\alpha_{j-2}^2\right] \lb{9.29B} \\
  \calI_6 &= I_{\calL,6}-\tfrac{1}{6}I_{3,6} \lb{9.29AA} \\
          &= \tfrac{1}{2}\sum_j \left[\tfrac{2}{3}\alpha_j^6+\tfrac{1}{3}\alpha_j^3\alpha_{j-1}^3+\alpha_j\alpha_{j-1}^2\alpha_{j-2}^2\alpha_{j-3} \right. \nonumber \\
          &\null\qquad\qquad \left.+\alpha_j^2\alpha_{j-1}^3\alpha_{j-2} +\alpha_j\alpha_{j-1}^3\alpha_{j-2}^2\right] \lb{9.30}
\end{align}
where we use the fact that adding a constant to all indices in a sum only changes the sum by a boundary term.

We start with $\calI_2$ by using Proposition \ref{P5.2}.  In terms of the $\calP_j$ of \eqref{5.15}, up to boundary terms
\begin{equation}\label{9.31}
  \calI_2 = \tfrac{1}{2}(2\calP_0-\calP_1-2\calP_2+\calP_3)
\end{equation}
by \eqref{9.29}.  On the other hand, by the same calculation that gave \eqref{9.2A}, $(S-1)^2(S+1)=S^3-S^2-S+1$ so
\begin{equation}\label{9.32}
  \left[(S-1)^2(S+1)\alpha\right]_j = \alpha_{j+3} - \alpha_{j+2} - \alpha_{j+1} + \alpha_j
\end{equation}
Thus
\begin{equation}\label{9.33}
  \sum_j \left[(S-1)^2(S+1)\alpha\right]_j^2 = 4\calP_0-2\calP_1-4\calP_2+2\calP_3
\end{equation}
We conclude by Proposition \ref{P5.2} that up to boundary terms
\begin{equation}\label{9.34}
  \calI_2 = \tfrac{1}{4} \norm{(S-1)^2(S+1)\alpha}_2^2
\end{equation}
and thus
\begin{equation}\label{9.35}
  \eqref{1.15} \Rightarrow \calI_2 < \infty
\end{equation}

By H\"{o}lder's inequality
\begin{equation}\label{9.36}
  \eqref{1.17} \Rightarrow \calI_6 < \infty
\end{equation}
By Proposition \ref{P4.1}
\begin{equation}\label{9.36A}
 \eqref{1.17} \Rightarrow \calL_8 < \infty
\end{equation}

Thus, we need to focus on $\calI_4$.  Let $\beta \equiv (S+1)\alpha$.  By Theorem \ref{T2.6} we have that
\begin{equation}\label{9.37}
  \gamma \equiv (S-1)\beta = (S^2-1)\alpha \in \ell^3
\end{equation}
Here is the key first step:

\begin{proposition} \lb{P9.2} \textrm{(a)} For any $m_1,\corJ{m_2,m_3},m_4$, we have that
\begin{equation}\label{9.38}
  \sum_j|\alpha_{j+m_1}\alpha_{j+m_2}\gamma_{j+m_3}\gamma_{j+m_4}| < \infty
\end{equation}

\textrm{(b)} For any $m_1,\corJ{m_2,m_3},m_4$, we have that
\begin{equation}\label{9.39}
  \sum_j|\alpha_{j+m_1}\alpha_{j+m_2}\alpha_{j+m_3}[\gamma_{j+m_4+1}-\gamma_{j+m_4}]| < \infty
\end{equation}

\textrm{(c)} For any $m_1,\corJ{m_2,m_3},m_4$, we have that
\begin{equation}\label{9.40}
  \sum_j \alpha_{j+m_1}\alpha_{j+m_2}\alpha_{j+m_3}\gamma_{j+m_4}
\end{equation}
is conditionally convergent.
\end{proposition}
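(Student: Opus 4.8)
The plan is to read (a) and (b) off from H\"older's inequality and to do the real work for (c) by summation by parts. For (a): Theorem~\ref{T2.6} (see~\eqref{2.24}) gives $\gamma=(S^2-1)\alpha\in\ell^3$, and $\alpha\in\ell^6$ is~\eqref{1.17}; since each shift $S^m$ is an isometry of every $\ell^p$, H\"older's inequality with exponents $6,6,3,3$ (note $\tfrac16+\tfrac16+\tfrac13+\tfrac13=1$) bounds the sum in~\eqref{9.38} by $\norm{\alpha}_6^2\norm{\gamma}_3^2<\infty$. For (b): $\gamma_{j+m_4+1}-\gamma_{j+m_4}=[(S-1)\gamma]_{j+m_4}=[(S-1)^2(S+1)\alpha]_{j+m_4}$, which lies in $\ell^2$ by~\eqref{1.15}, so H\"older with exponents $6,6,6,2$ (note $\tfrac16+\tfrac16+\tfrac16+\tfrac12=1$) bounds the sum in~\eqref{9.39} by $\norm{\alpha}_6^3\,\norm{(S-1)^2(S+1)\alpha}_2<\infty$.

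For (c), write $b_j=\alpha_{j+m_1}\alpha_{j+m_2}\alpha_{j+m_3}$ and recall $\gamma=(S-1)\beta$ with $\beta=(S+1)\alpha\in\ell^6$; in particular $\beta_j\to0$ and $\alpha_j\to0$ (as $\alpha\in\ell^6$), so all the ``boundary'' terms arising below vanish in the limit. Abel summation applied to $\sum_{j=0}^N b_j\gamma_{j+m_4}=\sum_{j=0}^N b_j(\beta_{j+m_4+1}-\beta_{j+m_4})$ reduces the convergence in~\eqref{9.40} to that of $\sum_j(b_j-b_{j-1})\beta_{j+m_4}$; by the discrete Leibniz rule~\eqref{2.14}--\eqref{2.15}, $b_j-b_{j-1}$ is a finite sum of products of two shifted $\alpha$'s and one shifted $(S-1)\alpha$, so it suffices to treat finitely many sums of the form $\Sigma=\sum_j\alpha_{j+r_1}\alpha_{j+r_2}[(S-1)\alpha]_{j+r_3}\beta_{j+r_4}$. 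In each such $\Sigma$, split $\beta_{j+r_4}=\beta_{j+r_3}+(\beta_{j+r_4}-\beta_{j+r_3})$, the second summand being a finite telescoping sum of $\gamma_{j+\cdot}$'s since $\gamma=(S-1)\beta$. For the $\beta_{j+r_3}$ part, use the difference-of-squares identity $[(S-1)\alpha]_n[(S+1)\alpha]_n=\alpha_{n+1}^2-\alpha_n^2$, so that this part becomes $\sum_j\alpha_{j+r_1}\alpha_{j+r_2}\bigl(\alpha_{j+r_3+1}^2-\alpha_{j+r_3}^2\bigr)$, to which one more Abel summation (with Leibniz) applies; the $(\beta_{j+r_4}-\beta_{j+r_3})$ part produces sums of the shape $\sum_j\alpha_{j+r_1}\alpha_{j+r_2}[(S-1)\alpha]_{j+r_3}\gamma_{j+i}$, and here one performs the decisive Abel summation, transferring the difference off the factor $[(S-1)\alpha]_{j+r_3}$: differentiating the companion product $\alpha_{j+r_1}\alpha_{j+r_2}\gamma_{j+i}$ lands, by Leibniz, a factor $(S-1)\gamma=(S-1)^2(S+1)\alpha\in\ell^2$ on one of the resulting terms, making it $\sum_j(\text{three shifted }\alpha\text{'s})\,[(S-1)\gamma]$, which is absolutely convergent exactly as in part (b), while the remaining terms recur with the same ``two $\alpha$'s, one $(S-1)\alpha$, one $\gamma$'' shape.

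The scheme thus has to be arranged so that every round of summation by parts contributes a vanishing boundary term, an absolutely convergent remainder subsumed by the templates of (a) and (b), plus terms of one of the finitely many recurring shapes above; the main obstacle is to make this close. I expect the resolution to proceed as in Sections~\ref{s5}--\ref{s6}: collect the recurring shapes (using that shifting all indices by a constant changes a sum only by a boundary term, in the spirit of Proposition~\ref{P5.2}), observe that they reproduce themselves up to boundary and H\"older-summable remainders with a net contractive coefficient, and solve the resulting finite linear system for the value of the sum. The subtle point --- and the reason that~\eqref{1.15}, not merely $\gamma\in\ell^3$, is what is genuinely used --- is to check that no remainder of the truly uncontrollable type $\sum_j(\text{decaying, but not }\ell^2)\cdot[(S-1)\alpha]_j^2$ ever survives uncancelled; it is precisely the extra regularity $(S-1)\gamma\in\ell^2$ that lets each such would-be term be re-expressed through an $\ell^2$ factor and absorbed.
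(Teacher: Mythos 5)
Parts (a) and (b) of your proposal are fine and coincide with the paper's own argument: H\"older with exponents $(6,6,3,3)$, resp.\ $(6,6,6,2)$, using $\alpha\in\ell^6$, $\gamma\in\ell^3$ and $(S-1)\gamma=(S-1)^2(S+1)\alpha\in\ell^2$.

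Part (c), which is the heart of the proposition, has a genuine gap. Your first summation by parts (writing $\gamma=(S-1)\beta$ and differencing the product of three $\alpha$'s with $S-1$) creates factors of $(S-1)\alpha$, and nothing available makes these small: under \eqref{1.15}+\eqref{1.17} alone, $(S-1)\alpha$ is merely in $\ell^6$ (think of nearly alternating $\alpha$'s), and even invoking \eqref{1.16} -- legitimate under the standing hypotheses of Theorem \ref{T9.1}, although the paper's proof of this proposition deliberately avoids it -- gives only $(S-1)\alpha\in\ell^4$, for which the H\"older exponent counts of your intermediate sums, $\tfrac16+\tfrac16+\tfrac14+\tfrac16$ and $\tfrac16+\tfrac16+\tfrac14+\tfrac13$, fall short of $1$. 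Hence each round of Abel summation reproduces sums of the same uncontrolled shapes (your Part 1 does so as well, yielding three $\alpha$'s times one $(S-1)\alpha$), and the decisive step -- that the finitely many recurring shapes, after normalizing shifts, close into a solvable linear system ``with a net contractive coefficient'' -- is exactly what you do not prove; the relations one actually gets look like $T=-T'-T''+(\text{summable})$ with shifted and permuted variants on the right, so there is no contraction, and you neither identify the finite set of unknowns nor check nondegeneracy. The paper sidesteps all of this by choosing the difference operator $D=S^2-1$, for which $D\alpha=\gamma$, and applying the Leibniz rule \eqref{9.42} to the quartic monomials $\alpha^4$ and $(S\alpha)^2\alpha^2$: writing $S^2\alpha=\alpha+\gamma$, one finds $D(\alpha^4)=4\gamma\alpha^3$ up to terms with two $\gamma$'s and two $\alpha$'s, which are absolutely summable by (a), while $\sum_j[D(\alpha^4)]_j$ telescopes to $0$ since $\alpha_n\to0$; this gives conditional convergence of $\sum_j\gamma_j\alpha_j^3$ in one stroke, with no $(S-1)\alpha$ ever appearing, and the general shifts $m_1,\dots,m_4$ are then reduced to the model cases $(0,0,0,0)$ and $(1,1,0,0)$ by changing $m_4$ in steps of $1$ (cost: an $(S-1)\gamma$ factor, summable as in (b)) and the other $m_i$ in steps of $2$ (cost: an extra $\gamma$, summable as in (a)). To salvage your scheme you would have to exhibit the closed system explicitly and verify it determines the sums; as written, (c) is not proved.
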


\begin{remarks} 1. We only need \corJ{conditional} summability so, since $\gamma_j=\alpha_{j+2}-\alpha_j$, (c) implies the conditional summability of the sum in \eqref{9.38} without the $|\cdot |$. However, we use (a) in the proof of (c).

2.  To avoid having to worry about boundary terms at $0$, we extend all sequences to $-\infty$ by setting $\alpha_n=0$ for $n \le -1$.  This doesn't effect conditional convergence of any sums.  Since $\alpha \in \ell^6$, all of $\alpha,\beta,\gamma$ go to zero as $n \to \pm \infty$.
\end{remarks}

\begin{proof} (a) $\tfrac{1}{6}+\tfrac{1}{6}+\tfrac{1}{3}+\tfrac{1}{3}=1$, so since $\alpha \in \ell^6, \, \gamma \in \ell^3$, H\"{o}lder's inequality implies \eqref{9.38}.

(b) $\tfrac{1}{6}+\tfrac{1}{6}+\tfrac{1}{6}+\tfrac{1}{2}=1$, so since $\alpha \in \ell^6, (S-1)\gamma \in \ell^2$, H\"{o}lder's inequality implies \eqref{9.39}.

(c) The intuition is simple.  The continuum analog is that if $f$ is $C^1$ on $\bbR$, $f(x) \to 0$ as $|x| \to \infty$, then $\int_{-R_1}^{R_2} f(x)^3 f'(x) dx = \corJ{\frac{1}{4}}\int_{-R_1}^{R_2} [f^4]'(x) dx$ has a zero limit.  The sum in \eqref{9.40} is a discrete analog so the key will be a summation by parts.

Since we'll be summing by parts, we need to know the appropriate discrete Leibniz rule.  Let $p \in \bbZ\setminus\{0\}$ and $D=S^p-1$ so $(Da)_n=a_{n+p}-a_n$.  Then
\begin{align}
  [D(ab)]_n &= a_{n+p}b_{n+p}-a_nb_n \nonumber \\
            &= a_n(Db)_n+(Da)_n(S^pb)_n \lb{9.41}
\end{align}
or $D(ab)=a(Db)+(Da)S^pb$.  By induction, one sees that
\begin{equation}\label{9.42}
  D(a^{(1)}\dots a^{(k)}) = \sum_{j=1}^{k}a^{(1)}\dots a^{(j-1)} (Da^{(j)})S^pa^{(j+1)}\dots S^pa^{(k)}
\end{equation}

Consider the sum in \eqref{9.40} first if $m_1=m_2=m_3=m_4=0$.  Let $D=S^2-1$.  By \eqref{9.42}
\begin{equation}\label{9.43}
  D(\alpha^4) = (D\alpha)(S^2\alpha)^3+\alpha(D\alpha)(S^2\alpha)^2+\alpha^2(D\alpha)(S^2\alpha)+\alpha^3(D\alpha)
\end{equation}

Given two sequences, $\kappa$ and $\eta$, write $\kappa \stackrel{.}{=} \eta$ to mean $\kappa-\eta \in \ell^1$.  In \eqref{9.43}, $D\alpha=\gamma$ so if we write $S^2\alpha=\alpha+\gamma$, the $\gamma$ term \corJ{produces products} of two $\alpha$'s and two $\gamma$'s, so in $\ell^1$ by (a).  Thus
\begin{equation}\label{9.43A}
  D(\alpha^4) \stackrel{.}{=} 4(D\alpha)\alpha^3 = 4\gamma\alpha^3
\end{equation}

The conditional sum of $D(\alpha^4)$ is finite and indeed zero since $\alpha \in \ell^6$ and
\begin{equation*}
  \sum_{-k}^{n}[D(\alpha^4)]_j = \alpha^4_{n+2} + \alpha^4_{n+1} - \alpha^4_{-k} - \alpha^4_{-k+1} \to 0
\end{equation*}
Thus $4\gamma\alpha^3$ is conditionally summable.

Consider next the case $m_1=m_2=1,m_3=m_4=0$.  By \eqref{9.42} and the same argument that led to \eqref{9.43A}
\begin{align}
  D((\corOR{S}\alpha)^2\alpha^2) &\stackrel{.}{=} 2(S\alpha)^2\alpha(D\alpha)+2\alpha^2(DS\alpha)S\alpha \lb{9.44} \\
                           &\stackrel{.}{=} 2(S\alpha)^2\alpha(D\alpha)+2(S^2\alpha)^2(DS\alpha)S\alpha \lb{9.45}
\end{align}
since, as above, we can replace $\alpha$ by $S^2\alpha$ making an $\ell^1$ error in the four--fold product.

\corOR{Telescoping as in \eqref{9.43A}, we have that $D((S\alpha)^2\alpha^2)$ is conditionally summable.} Note that whether a sequence is conditionally summable or not doesn't change by a translation of index so we can replace $(S^2\alpha)^2(DS\alpha)S\alpha$ by $(S\alpha)^2\alpha(D\alpha)$ and conclude that
\begin{equation}\label{9.47}
  D((\corOR{S}\alpha)^2\alpha^2) - 4(S\alpha)^2\alpha D\alpha
\end{equation}
is conditionally summable and thus $(S\alpha)^2\alpha D\alpha$ is conditionally summable proving the result when $m_1=m_2=1,m_3=m_4=0$.

Now consider general $m_j$.  Since $(S-1)\gamma \in \ell^2$, we can change $m_4$ to any value we want making an $\ell^1$ change.  Similarly, by shifting by multiples of 2 units, we can change each of $m_1, m_2, m_3$ to $0$ or $1$.  If they are all equal after this, set $m_4$ to the common value and get \corJ{conditional} convergence by the case (0,0,0,0).  If the first three $m$'s have two equal and one unequal, set $m_4$ to the unequal value and get either (1,1,0,0) or (0,0,1,1).  We've handled the first and by using the $S^2-1$ trick, (0,0,1,1) is the same as (0,0,-1,-1) and by covariance, that is the same as (1,1,0,0). \end{proof}

Next, we recall the remarkable fact that if \eqref{1.15}+\eqref{1.17}, then $(S-1)\alpha \in \ell^4 \iff (S-1)^2\alpha \in \ell^4$! (see Theorem \ref{T2.6}).

\begin{proof} [Proof of Theorem \ref{T9.1}] As we've seen, we need only show that $\calI_4$ is conditionally convergent.  We only used \eqref{1.15}+\eqref{1.17} so far, but not \eqref{1.16} which we'll use in the form $(S-1)\alpha \in \ell^4$.

We begin by noting that because of (c) of the last Proposition, $\sum \alpha_j^3(\alpha_{j+1}-\alpha_{j-1})$ is conditionally convergent.  Using \corOR{that index shifts modify sums only by boundary terms,}
%index translation covariance,
we conclude that
\begin{equation}\label{9.48}
  \sum_j(\alpha_j^3\alpha_{j-1} - \alpha_j\alpha_{j-1}^3)
\end{equation}
is conditionally convergent.

Since $[\corJ{(}(S-1)\alpha\corJ{)}_{j-1}]^4 = [\alpha_j-\alpha_{j-1}]^4$, using \corOR{again that index shifts do not affect conditional convergence} and \eqref{9.48}, we see that $\norm{(S-1)\alpha}_4^4 < \infty$ implies that
\begin{equation}\label{9.49}
  \sum_j \left[2\alpha_j^4-8\alpha_j^3\alpha_{j-1}+6\alpha_j^2\alpha_{j-1}^2\right]
\end{equation}
is conditionally convergent.

On the other hand, by (c) of the last Proposition, in \eqref{9.29B} we can replace $\alpha_{j-2}$ by $\alpha_j$ and $\alpha_{j-3}$ by $\alpha_{j-1}$ without effecting conditional convergence. If we do that and use \eqref{9.48} again, we see that $\calI_4$ is a conditionally convergent sum plus
\begin{equation}\label{9.50}
  \wti{\calI}_4 = \sum_j \left[\alpha_j^4 + 3\alpha_j^2\alpha_{j-1}^2 - 4\alpha_j^3\alpha_{j-1}\right]
\end{equation}
This is half the sum in \eqref{9.49} so \eqref{1.16} implies conditional convergence of the sum in $\wti{\calI}_4$.
\end{proof}

%%%%%%%%%%%%%%%%%%%%%%%%%%%%%%%

\end{document}